\numberwithin{equation}{section}
\numberwithin{figure}{section}
  \theoremstyle{plain}
  \newtheorem*{thm*}{\protect\theoremname}
  \newtheorem{thm}{\protect\theoremname}[section]
  \newtheorem{lem}[thm]{\protect\lemmaname}
  \newtheorem{cor}[thm]{\protect\corollaryname}
  \theoremstyle{definition}
  \newtheorem{defn}[thm]{\protect\definitionname}
  \newtheorem{example}[thm]{\protect\examplename}
  \theoremstyle{remark}
  \newtheorem{rem}[thm]{\protect\remarkname}
  \newcommand{\R}{{\mathbb {R}}}
    \newcommand{\N}{{\mathbb {N}}}
\date{}
\author{Yotam Smilansky} 
\address{Yotam Smilansky, Raymond and Beverly Sackler School of Mathematical Sciences, Tel Aviv University, Tel Aviv 69978, Israel$^§$. \medskip \newline \textit{Email address}: {\tt yotam.smilansky@rutgers.edu} \bigskip \newline \textit{$^§$Current address}: Department of Mathematics, Rutgers University, 110 Frelinghuysen Road, Piscataway, NJ 08854, USA.}
  \providecommand{\corollaryname}{Corollary}
  \providecommand{\definitionname}{Definition}
  \providecommand{\examplename}{Example}
  \providecommand{\lemmaname}{Lemma}
  \providecommand{\remarkname}{Remark}
  \providecommand{\theoremname}{Theorem}
\providecommand{\theoremname}{Theorem}
\begin{document}
\global\long\def\adj{{\rm adj}}

\global\long\def\tr{{\rm tr}}
\global\long\def\rank{{\rm rank}}

\global\long\def\span{{\rm span}}

\global\long\def\diag{{\rm diag}}

\global\long\def\vol{{\rm vol}}
\global\long\def\supp{{\rm supp}}
\global\long\def\mod{\text{mod}}

\title{Uniform Distribution of Kakutani Partitions Generated By Substitution
Schemes}
\begin{abstract}
Substitution schemes provide a classical method for constructing tilings
of Euclidean space. Allowing multiple scales in the scheme, we introduce
a rich family of sequences of tile partitions generated by the substitution
rule, which include the sequence of partitions of the unit interval
considered by Kakutani as a special case. Using our recent path counting
results for directed weighted graphs, we show that such sequences
of partitions are uniformly distributed, thus extending Kakutani's
original result. Furthermore, we describe certain limiting frequencies
associated with sequences of partitions, which relate to the distribution
of tiles of a given type and the volume they occupy.
\end{abstract}

\maketitle

\section{Introduction and main results}

\subsection{The Kakutani splitting procedure}

The splitting procedure introduced by Kakutani in \cite{Kakutani}
and the associated sequences of partitions $\left\{ \pi_{m}\right\} $
of the unit interval $\mathcal{I}=\left[0,1\right]$ are defined as
follows: Fix a constant $\alpha\in\left(0,1\right)$ and define the
first element of the sequence to be the trivial partition $\pi_{0}=\mathcal{I}$.
Next, define $\pi_{1}$ to be the partition of $\mathcal{I}$ into
two subintervals $\left[0,\alpha\right]$ and $\left[\alpha,1\right]$,
that is, into two intervals of disjoint interior, one of length $\alpha$
and one of length $1-\alpha$. Assuming $\pi_{m-1}$ is defined, define
the partition $\pi_{m}$ by splitting every interval of maximal length
in $\pi_{m-1}$ into two subintervals with disjoint interiors, proportional
to the two subintervals that constitute $\pi_{1}$. This sequence
is known as the $\alpha$-Kakutani sequence of partitions, and the
procedure that generates it is called the Kakutani splitting procedure. 

For example, the trivial partition $\pi_{0}$ and the first four non-trivial
partitions $\pi_{1},\pi_{2},\pi_{3}$ and $\pi_{4}$ of the $\frac{1}{3}$-Kakutani
sequence of partitions of the unit interval $\mathcal{I}$ are illustrated
in Figure \ref{fig:a few elements in Kakutani third partition}. 

\begin{figure}[H]
	\includegraphics[scale=1.56]{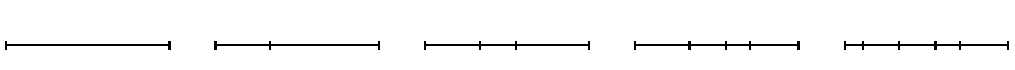}\caption{\label{fig:a few elements in Kakutani third partition}From left to
		right, the first few partitions in the $\frac{1}{3}$-Kakutani sequence
		of partitions of the unit interval.}
\end{figure}

A sequence $\left\{ \pi_{m}\right\} $ of partitions of $\mathcal{I}$
is said to be uniformly distributed if for any continuous function
$f$ on $\mathcal{I}$ 
\[
\lim_{m\rightarrow\infty}\frac{1}{k(m)}\sum_{i=1}^{k(m)}f\left(t_{i}^{m}\right)=\int\limits _{\mathcal{I}}f(t)dt,
\]
where $k(m)$ is the number of intervals in the partition
$\pi_{m}$ and $t_{i}^{m}$ is the right endpoint of the interval $i$
in the partition $\pi_{m}$. We remark that the choice of sampling
points as the right endpoints of the intervals is arbitrary. 
\begin{thm*}[Kakutani]
	For any $\alpha\in\left(0,1\right)$, the $\alpha$-Kakutani sequence
	of partitions is uniformly distributed.
\end{thm*}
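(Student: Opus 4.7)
The plan is to view the Kakutani procedure as a simple multi-scale substitution scheme and to deduce the desired uniform distribution from the path counting results for directed weighted graphs announced in the abstract. First I would reduce the statement to a counting assertion. By Weierstrass approximation, uniform distribution for every $f\in C(\mathcal{I})$ is equivalent to weak convergence of the empirical measures $\mu_m=\frac{1}{k(m)}\sum_{i=1}^{k(m)}\delta_{t_i^m}$ to Lebesgue measure on $\mathcal{I}$, and this in turn reduces to showing
\[
\frac{1}{k(m)}\#\bigl\{1\le i\le k(m):t_i^m\le s\bigr\}\longrightarrow s\qquad(m\to\infty)
\]
for every $s\in(0,1)$.

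Next I would recast the splitting rule as a substitution with two prototiles, $A=[0,\alpha]$ and $B=[\alpha,1]$, of lengths $\alpha$ and $1-\alpha$ respectively. Splitting a tile of length $L$ of either type produces a scaled copy of $A$ of length $\alpha L$ followed by a scaled copy of $B$ of length $(1-\alpha)L$, so all tiles appearing across the entire sequence have lengths of the form $\alpha^{a}(1-\alpha)^{b}$ with $a,b\ge 0$. This substitution is encoded in a directed weighted graph $G$ on the vertex set $\{A,B\}$ with four edges that carry the scaling ratio between parent and child: both edges out of $A$ have weight $\alpha$ and both edges out of $B$ have weight $1-\alpha$. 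A path $v_{0}\to v_{1}\to\cdots\to v_{n}$ in $G$ then corresponds to a lineage of descendants of a single initial prototile, and the product of edge weights along the path equals the scale of the final descendant relative to the initial ancestor. Kakutani's rule of splitting only the intervals of maximal length realises the natural continuous-time schedule in which each tile is substituted after an elapsed time proportional to its length, and the partitions $\pi_m$ are exactly the snapshots of this flow at the discrete instants at which a substitution takes place.

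With this setup in place, I would feed $G$ into the path counting theorem to obtain, as $\epsilon\to 0$, the asymptotic number of descendants of each prototile whose length exceeds $\epsilon$. Refining the count by endpoint is then straightforward: descendants of $A$ tile the left subinterval $[0,\alpha]$ and descendants of $B$ tile $[\alpha,1]$, and the same picture holds recursively inside every subinterval of the form $[0,\alpha^{a}(1-\alpha)^{b}]$ or its complementary subintervals. Combining the path-count asymptotics with this self-similarity, the proportion of right endpoints of $\pi_{m}$ that fall into $[0,\alpha]$ tends to $\alpha$, the proportion falling into $[0,\alpha^{2}]$ tends to $\alpha^{2}$, and more generally the proportion in any dyadic-type subinterval $[0,\alpha^{a}(1-\alpha)^{b}]$ tends to its length. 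Since the set of such $s$ is dense in $[0,1]$ and the limiting function $s\mapsto\lim_m\mu_m([0,s])$ is monotone, this forces $\mu_m([0,s])\to s$ for all $s\in(0,1)$, which is what we wanted.

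The main obstacle I expect is the timing mismatch between Kakutani's discrete clock, which fires only on maximal-length tiles, and the continuous scale on which the path counting output is naturally phrased. The bookkeeping required is to show that the tiles of $\pi_{m}$ coincide with the tiles of length at least some threshold $\epsilon_{m}\to 0$ in the substitution flow, with $k(m+1)/k(m)$ controlled well enough that the difference between the discrete Ces\`aro average and the continuous path-count asymptotics is negligible. Once this calibration is in hand, the path counting theorem applied to $G$ supplies the required limits and Kakutani's theorem follows; no classical renewal-theoretic input is needed.
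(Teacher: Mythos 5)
Your overall strategy -- reduce uniform distribution to a counting statement, encode the splitting rule as a weighted directed graph, and read off the tile counts from the path-counting asymptotics of \cite{Graphs} -- is the route the paper takes, and the two pieces of bookkeeping you flag (the reduction to counting points in a dense family of intervals, and the identification of the tiles of $\pi_m$ with the metric paths of a fixed length $l_m$) are exactly Lemma \ref{Lemma: counting implies uniform distribution} and Lemma \ref{lem: tiles =00003D paths}. The genuine gap is that Theorem $1$ of \cite{Graphs} applies only to \emph{incommensurable} graphs. For the Kakutani graph the closed loops have lengths $\log\frac{1}{\alpha}$ and $\log\frac{1}{1-\alpha}$, so incommensurability holds if and only if $\frac{\log\alpha}{\log(1-\alpha)}\notin\mathbb{Q}$; this fails for infinitely many $\alpha$, e.g.\ $\alpha=\frac12$ or $\alpha=\frac{1}{\varphi}$ with $\varphi$ the golden ratio (where every interval length is an integer power of $\varphi$). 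In that lattice case the number of metric paths of length $x$ does \emph{not} satisfy $Ce^{\lambda x}+o(e^{\lambda x})$ -- it carries multiplicatively periodic fluctuations, the standard lattice/non-lattice dichotomy of renewal theory -- so the step ``feed $G$ into the path counting theorem'' returns nothing for those $\alpha$, and your closing remark that no renewal-theoretic input is needed is exactly backwards: the tool you invoke is a Wiener--Ikehara-type result and its non-lattice hypothesis is precisely where your argument breaks. The paper closes this case by a separate mechanism: Theorem \ref{thm: commensurable can be covered by fixed scale} shows that a Kakutani sequence of a commensurable scheme is a subsequence of a generation sequence of an auxiliary fixed scale scheme (obtained by rescaling prototiles and subdividing edges of the graph), to which Perron--Frobenius applies via Theorem \ref{Thm: main result for generation sequences}. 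Some argument of this kind, or a periodic-term version of the counting theorem, is needed before you can claim the result for all $\alpha\in(0,1)$.

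A smaller issue: with two prototiles $A=[0,\alpha]$ and $B=[\alpha,1]$ of different lengths, the product of your multiplicative edge weights gives the scale of a descendant \emph{relative to its own prototype}, not its actual length, so ``split the longest interval first'' does not correspond to a sweep of a single threshold on weight products. One must first pass to an equivalent normalized scheme (all prototiles of length $1$) so that path length determines tile length monotonically, as in Corollary \ref{cor: monotonicity of map between paths lengths and tile volumes}; only then do the partitions $\pi_m$ coincide with level sets of the metric-path length. This is repairable bookkeeping, but it is not automatic in the two-prototile encoding you chose.
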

Our main result is a proof of uniform distribution for sequences of
partitions generated by multiscale substitution schemes in $\mathbb{R}^{d}$,
constituting a generalization of the $\alpha$-Kakutani sequences
considered above.

\subsection{\label{sec:Multiscale-Substitution-Schemes}Tiles, partitions and multiscale substitution schemes}

\begin{defn}
	A \textit{tile} $T\subset\mathbb{R}^{d}$ is a Lebesgue measurable
	bounded set with positive Lebesgue measure and boundary of measure
	zero. The measure of $T$ is denoted by $\vol T$ and referred to
	as the volume of $T$.
\end{defn}

\begin{defn}
	Let $U\subset\mathbb{R}^{d}$ be a bounded set. A \textit{partition} of $U$ is a set of subsets of $U$ with pairwise disjoint interiors, the union
	of which is equal to $U$. Given a list
	of tiles $L$, we say that $L$ \textit{tiles} $U$, and that $U$ is \textit{partitioned} into elements of $L$, if there exists a partition of $U$ such that each element of the partition is isometric to exactly one tile in $L$. 
\end{defn}

\begin{defn}\label{def: multicale substitution scheme}
	A \textit{multiscale substitution scheme} $\sigma=(\tau_\sigma,\omega_\sigma,\Sigma_\sigma)$ in $\R^d$ consists of a finite list $\tau_\sigma=(T_1,\ldots,T_n)$ of labeled tiles in $\R^d$  called \textit{prototiles}, each equipped with a finite list of \textit{substitution tiles} 
	\begin{equation}
	\omega_\sigma(T_i)=\left(\alpha_{ij}^{\left(k\right)} T_j:\,j=1,\ldots,n,\,\,k=1,\ldots,k_{ij}\right)\notag
	\end{equation} 
	so that $\omega_\sigma(T_i)$ tiles $T_i$, and a family $\Sigma_\sigma$ of  \textit{substitution rules}, where each $\varrho_\sigma\in\Sigma_\sigma$ assigns a partition $\varrho_\sigma(T_i)$ of $T_i$ into elements of $\omega_\sigma(T_i)$. By assumption $\Sigma_\sigma$ is non-empty.
	
	\noindent For all $i,j$ and $k$ the scaling constants $\alpha_{ij}^{(k)}$ are positive, and the \textit{constants of substitution}
	\begin{equation}
	\beta_{ij}^{(k)}=\left(\frac{\vol T_{j}}{\vol T_{i}}\right)^{1/d}\alpha_{ij}^{(k)}\label{eq: constants of substitution}
	\end{equation}
	satisfy $0<\beta_{ij}^{(k)}<1$.  
\end{defn}

It is convenient to think of the analogy to jigsaw puzzles, where the prototiles $\tau_\sigma$ are puzzles to be solved using the pieces in $\omega_\sigma$, and every $\varrho_\sigma\in\Sigma_\sigma$ gives a solution $\varrho_\sigma(T_i)$ to each of the puzzles. 
\begin{figure}[H]
	\includegraphics[scale=1.1]{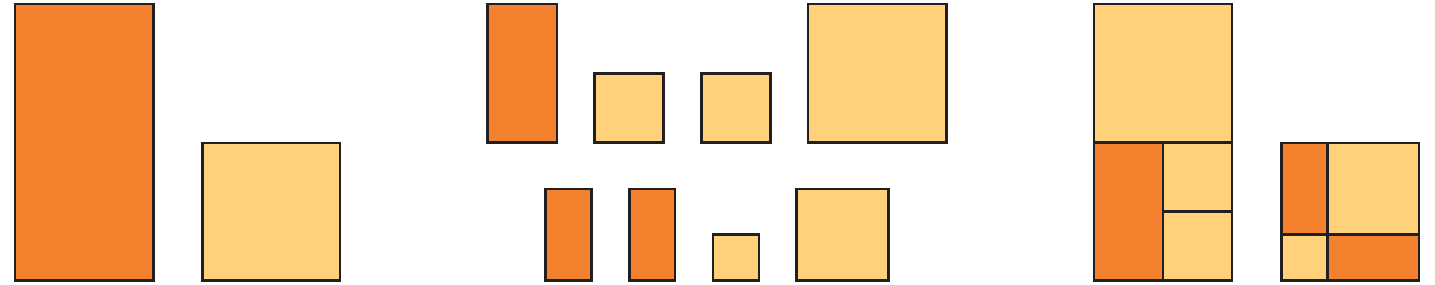}\caption{\label{fig: RS scheme}A substitution scheme $\sigma$ on a rectangle $\mathcal{R}$ and a square $\mathcal{S}$ in $\R^2$. From left to right, the prototiles $\tau_\sigma=(\mathcal{R},\mathcal{S})$, the substitution tiles $\omega_\sigma(\mathcal{R})$ (top) and  $\omega_\sigma(\mathcal{S})$, and a substitution rule $\varrho_\sigma$.}
\end{figure}
A set of prototiles $\tau_\sigma$ may contain two or more labeled tiles that are identical as sets in $\mathbb{R}^{d}$, or more
generally are the images of each other under conformal maps of $\mathbb{R}^{d}$, but carry distinct labels. 
When applying a rescaling or an isometry to a labeled tile, we assume that the label is preserved. This allows us to naturally extend substitution rules to the image of a tile under such mappings. More precisely, given a tile $T=\alpha\cdot\varphi (T_i)$, where $\alpha>0$ and $\varphi$ is an isometry of $\R^d$, for any $\varrho_\sigma\in\Sigma_\sigma$ we define
\[
\varrho_\sigma(T)=\varrho_\sigma(\alpha\cdot\varphi\left( T_{i}\right))=\alpha\cdot\varphi\left(\varrho_\sigma(T_{i})\right)
\]
and
\[
\omega_\sigma(T)=\omega_\sigma(\alpha\cdot\varphi\left(T_{i}\right))=\left(\alpha T'\,:\, T'\in \omega_\sigma(T_i)\right).
\] 
For every $k\in\N$ we can thus define inductively 
\[
\varrho_\sigma^{k+1}(T)=\bigsqcup_{T'\in\varrho_\sigma^{k}(T)}\varrho_\sigma(T')\,\,\,\,\,\,\,\text{and}\,\,\,\,\,\,\,\omega_\sigma^{k+1}(T)=\left(T''\in\omega_\sigma(T')\,:\,T'\in\omega_\sigma^k(T)\right).
\]

\begin{defn}\label{def: all substitution tiles}
	\label{def:Irresucible multiscale substitution}Let $\sigma$
	be a multiscale substitution scheme. Denote by
	\[
	\mathscr{O}^\sigma_{i}=\left(T\in \omega_\sigma^k(T_i)\,:\,k\in\N\right)
	\] 
	the list of substitution tiles defined by finitely many applications of elements of $\Sigma_\sigma$ to $ T_{i}$. A tile $T\in\mathscr{O}^\sigma_{i}$ is \textit{of type} $j$ if it is labeled $j$, and $\sigma$
	is \textit{irreducible} if $\mathscr{O}^\sigma_{i}$ contains a tile of type
	$j$ for any $1\leq i,j\leq n$.
\end{defn}

\subsection{Sequences of partitions generated by multiscale substitutions schemes}

\begin{defn}
	Let $\sigma$ be a multiscale substitution scheme.
\begin{enumerate}
	\item A \textit{Kakutani sequence of partitions} $\left\{ \pi_{m}\right\} $
	of $ T_{i}\in\tau_\sigma$ is defined as follows: \\
	The trivial partition is $\pi_{0}= T_{i}$. For any $m\in\mathbb{N}$,
	if $\pi_{m-1}$ is a partition of $ T_{i}$ consisting of
	labeled tiles, then the partition $\pi_{m}$ is defined by substituting tiles of maximal volume in $\pi_{m-1}$, each tile substituted according to an arbitrarily and independently chosen substitution rule $\varrho_\sigma\in\Sigma_\sigma$.
	\item A \textit{generation sequence of partitions} $\left\{ \delta_{k}\right\} $
	of $ T_{i}\in\tau_\sigma$ is defined as follows: \\
	The trivial partition is $\delta_{0}= T_{i}$. For any $k\in\mathbb{N}$,
	if $\delta_{k-1}$ is a partition of $ T_{i}$ consisting of
	labeled tiles, then the partition $\delta_{k}$ is defined by substituting
	all tiles in $\delta_{k-1}$, each tile substituted according to an arbitrarily and independently chosen substitution rule $\varrho_\sigma\in\Sigma_\sigma$. 
\end{enumerate}
\end{defn}

For illustrated examples of Kakutani and generation sequences of partitions, see Section \ref{sec: examples}. Note that for any Kakutani sequence $\{\pi_m\}$ and generation sequence $\{\delta_k\}$ of $T_i$, every tile $T\in\mathscr{O}^\sigma_{i}$ corresponds to at least one partition $\pi_{m}$ and to exactly one partition $\delta_{k}$, and that $\mathscr{O}^\sigma_{i}$ is the union of all substitution tiles used to tile the elements of $\{\pi_m\}$  and $\{\delta_k\}$. 

\begin{rem}
	In the terminology of \cite{Kakutani}, the splitting procedures that define $\left\{ \pi_{m}\right\} $
	and $\left\{ \delta_{k}\right\} $ correspond to ``maximal refinement''
	and ``refinement'', respectively, and the generation of a
	tile is its ``rank''.
\end{rem}

\subsection{Uniform distribution}
\begin{defn}
	\label{def: ud sequences of sets of points}Let $U\subset\mathbb{R}^{d}$
	be a bounded measurable set of positive measure, and for every $m\in\mathbb{N}$
	let $x_{m}$ be a finite set of points in $U$ of cardinality $\left|x_{m}\right|$
	tending to infinity with $m$. The sequence $\left\{ x_{m}\right\} $
	is \textit{uniformly distributed }in $U$ if for any continuous function
	$f$ on $U$ 
	\[
	\lim_{n\rightarrow\infty}\frac{1}{\left|x_{m}\right|}\sum_{x\in x_{m}}f(x)=\frac{1}{\vol U}\int\limits _{U}f(t)dt,
	\]
	where the integration is with respect to Lebesgue measure.
\end{defn}

\begin{defn}
	Let $\left\{ \gamma_{m}\right\} $ be a sequence of partitions of
	$U$. A\textit{ marking sequence} $\left\{ x_{m}\right\} $ of $\left\{ \gamma_{m}\right\} $
	is a sequence of sets of points in $U$, such that every set in the
	partition $\gamma_{m}$ contains a single point of $x_{m}$, and all
	points in $x_{m}$ are distinct. The sequence of partitions $\left\{ \gamma_{m}\right\} $
	is \textit{uniformly distributed} if there exists a marking sequence
	$\left\{ x_{m}\right\} $ of $\left\{ \gamma_{m}\right\} $ that
	is uniformly distributed in $U$.
\end{defn}
Definition \ref{def: ud sequences of sets of points} is equivalent
to the weak-{*} convergence of the normalized sampling measures
\[
\frac{1}{\left|x_{m}\right|}\sum_{x\in x_{m}}\delta_{x}
\]
to the normalized Lebesgue measure on $U$, where $\delta_{x}$ is
the Dirac measure concentrated at $x$. The marking introduced in
\cite{Kakutani} consists of the boundary points of the intervals
constituting each partition of $\mathcal{I}$. \\

The following is our main result.
\begin{thm}
	\label{Thm: Main result}Let $\sigma$ be an irreducible multiscale substitution scheme, and let $\left\{ \pi_{m}\right\} $ be a Kakutani
	sequence of partitions of $T_{i}\in\tau_\sigma$. Then $\left\{ \pi_{m}\right\} $ is uniformly distributed in $ T_{i}$.
\end{thm}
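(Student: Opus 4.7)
The strategy is to reparametrize the Kakutani dynamics by a continuous scale variable, apply the path-counting asymptotics for the substitution graph to count tiles by type and location, and use the self-similarity of multiscale substitutions to pass from tile counts to equidistribution of marking points. I would first establish that the maximum diameter of tiles in $\pi_{m}$ tends to zero as $m\to\infty$. Because $\sigma$ has finitely many substitution constants and each lies in $(0,1)$, there is a uniform bound $\beta_{\max}<1$, so every substitution strictly shrinks a tile by at least this factor in each linear dimension. Since Kakutani processes all maximal-volume tiles at each step and the countable set of attainable tile volumes in $\mathscr{O}^\sigma_i$ accumulates only at $0$, the top of this set is exhausted step by step, forcing the maximal volume, and hence the maximal diameter, in $\pi_m$ to tend to zero.

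Next, I would introduce a continuous scale-indexed family $\widetilde{\pi}(v)$ of partitions of $T_i$, defined for a fixed (but arbitrary) admissible sequence of rule choices by substituting every tile whose volume exceeds $v$ until none remain. The Kakutani sequence $\{\pi_m\}$ coincides, up to reindexing, with a cofinal subfamily of $\{\widetilde{\pi}(v)\}_{v\downarrow 0}$, so it suffices to prove uniform distribution along $v\downarrow 0$. The crucial structural feature is self-similarity: if $T\in\widetilde{\pi}(v_0)$ has type $j$ and volume $u$, then for $v<v_0$ the restriction of $\widetilde{\pi}(v)$ to $T$ is isometric, after rescaling by $(\vol T_j/u)^{1/d}$, to an admissible scale-$(v\,\vol T_j/u)$ partition of the prototile $T_j$.

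The third ingredient is the input from path counting. Encoding $\sigma$ as a weighted directed graph $G_\sigma$ with vertices labeled by prototile types and one edge per substitution tile, weighted by $-\log\beta_{ij}^{(k)}$, the number of tiles of type $j$ in $\widetilde{\pi}(v)$ starting from $T_i$ is realized as a count of weighted paths $i\to j$ in $G_\sigma$ whose total weight lies in a prescribed window. The author's path-counting theorems then yield an asymptotic $|\widetilde{\pi}(v)|\sim c_i\,v^{-\alpha}$ as $v\to 0$, and, more importantly, imply that for every fixed $T\in\widetilde{\pi}(v_0)$ the number of sub-tiles of $T$ appearing in $\widetilde{\pi}(v)$ is asymptotically $(\vol T/\vol T_i)\cdot|\widetilde{\pi}(v)|$. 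The positivity and uniformity of this proportionality across prototile types depend on the irreducibility of $\sigma$, which plays the role of a Perron--Frobenius-type condition on $G_\sigma$.

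To conclude, given a continuous $f$ on $T_i$ and $\varepsilon>0$, I pick $v_0$ small enough that $f$ oscillates by at most $\varepsilon$ on every $T\in\widetilde{\pi}(v_0)$ (possible by the first paragraph). Splitting the marking sum over $\widetilde{\pi}(v)$ according to the coarser partition $\widetilde{\pi}(v_0)$ and invoking the proportionality from the previous paragraph rewrites the average as $\sum_{T\in\widetilde{\pi}(v_0)}(\vol T/\vol T_i)\,f(x_T)+O(\varepsilon)$, which is a Riemann sum for $(\vol T_i)^{-1}\int_{T_i}f(t)\,dt$; letting first $v\to 0$ and then $v_0\to 0$ (equivalently, $\varepsilon\to 0$) yields the theorem. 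The \emph{main obstacle} is the execution of the third paragraph: converting the raw path-counting asymptotics into the uniform proportionality of sub-tile counts to sub-volume. This requires that the leading Perron-type eigenvalue of $G_\sigma$ govern the asymptotics independently of tile type and location, and that the leading constant be isolated with enough precision to survive summation over all $T\in\widetilde{\pi}(v_0)$ without losing control of the main term.
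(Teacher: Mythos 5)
Your overall architecture matches the paper's: shrink diameters, reduce uniform distribution to the single counting statement $\lvert x_{m}\cap T\rvert/\lvert x_{m}\rvert\to\vol T/\vol T_{i}$ for tiles $T$ appearing in the sequence, and obtain that statement from path-counting asymptotics on the associated graph $G_\sigma$ (your volume threshold $v$ is just $e^{-d\,l_m}$ in the paper's parametrization by path lengths). Your closing Riemann-sum argument is a legitimate, arguably more direct, substitute for the paper's weak-$*$ argument, since the nesting of the partitions makes the grouping of marking points well defined.

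The genuine gap is in your third paragraph, and it is not merely a matter of ``isolating the leading constant with enough precision'': the path-counting asymptotic $N(x)=Ce^{\lambda x}+o(e^{\lambda x})$ that you invoke holds \emph{only for incommensurable graphs}. For commensurable schemes --- which include every fixed-scale scheme, e.g.\ Penrose--Robinson --- the set of path lengths lies in a discrete arithmetic structure, the Laplace transform of the counting function has infinitely many poles on the critical line, and the count $\lvert\widetilde{\pi}(v)\rvert$ grows like $v^{-\alpha}$ times a genuinely oscillating factor; the paper's Example~\ref{example:penrose statistics} exhibits type frequencies that fail to converge for exactly this reason. So your asserted asymptotics $\lvert\widetilde{\pi}(v)\rvert\sim c_i v^{-\alpha}$ and the sub-tile proportionality cannot be extracted from the counting theorems in that case, and your proof covers only incommensurable $\sigma$. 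The paper handles the complementary case by an entirely different mechanism: a commensurable scheme is shown to be equivalent to one whose graph has rationally dependent edge lengths, vertices are inserted so that all edges have equal length, and the Kakutani sequence becomes a subsequence of a \emph{generation} sequence of an auxiliary fixed-scale scheme on an enlarged prototile set (Theorem~\ref{thm: commensurable can be covered by fixed scale}); uniform distribution then follows from Perron--Frobenius theory for the substitution matrix, including the cyclic block decomposition needed when that matrix is irreducible but not primitive. To complete your proof you would need either to add this commensurable branch or to prove directly that the oscillating factors cancel in the ratio $\lvert x_m\cap T\rvert/\lvert x_m\rvert$, which amounts to the same Perron--Frobenius analysis.
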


The irreducibility condition is crucial. As shown in Section 2 of
\cite{Volcic}, there are simple examples of sequences of partitions
generated by non-irreducible schemes which are not uniformly distributed. 
\begin{rem}
	In our proof of Theorem \ref{Thm: Main result} as well as in the
	proofs of the results stated below, the choice of marking sequences
	is arbitrary. In fact, we show that every marking sequence of the
	sequences of partitions is uniformly distributed.
\end{rem}

\subsection{Incommensurable schemes and frequencies of types }
\begin{defn}\label{def: incommensurable schemes}
	An irreducible multiscale substitution scheme
	$\sigma$ is \textit{incommensurable} if there exist $1\leq i,j\leq n$
	and two tiles $T_{1}\in\mathscr{O}^\sigma_{i}$ of type $i$ and $T_{2}\in\mathscr{O}^\sigma_{j}$ of type $j$ so that
	\[
	\log\frac{\vol T_{1}}{\vol T_{i}}\notin\mathbb{Q}\log\frac{\vol T_{2}}{\vol T_{j}}.
	\]
	Otherwise the scheme is \textit{commensurable}.
\end{defn}

Easy non-trivial examples arise from $\alpha$-Kakutani sequences, which can be formulated in the language of multiscale substitution schemes with a prototile set consisting only of the unit interval, and a substitution rule partitioning the unit interval into the union of the intervals $[0,\alpha]$ and $[\alpha,1]$, see also Example \ref{ex:the alpha-Kakutani construction}. If $\alpha=\frac{1}{3}$, we can simply pick $T_1=[0,\frac{1}{3}]$ and $T_2=[\frac{1}{3},1]$, and since $\frac{\log 3}{\log 2}$ is irrational, incommensurability follows. On the other hand, since all intervals that appear in an $\alpha$-Kakutani sequence are of length $\alpha^k(1-\alpha)^\ell$, if $\varphi$ is the golden ratio and $\alpha=\frac{1}{\varphi}$, then since $1-\frac{1}{\varphi}=\frac{1}{\varphi^2}$ all interval lengths are an integer power of $\varphi$, and commensurability follows.

Admittedly, the definition of incommensurability strictly in terms of the substitution scheme $\sigma$ seems rather mysterious. As we will see, it is more natural and easier to verify when considered in the context of the graph $G_\sigma$ associated with the substitution scheme $\sigma$, introduced in Section \ref{sec:Graphs}. For now we only note that incommensurable schemes are generic in the sense that for almost any choice of constants of substitution, the resulting scheme is incommensurable. We also refer to Section \ref{sec:Incommensurable and commensurable and examples} for equivalent definitions and additional examples of commensurable and incommensurable substitution schemes.
\begin{defn}
	Let $\sigma$
	be a multiscale substitution scheme and let $\left\{ \gamma_{m}\right\} $ be either a Kakutani or a generation sequence of partitions of $ T_{i}\in\tau_\sigma$. For
	$1\leq r\leq n$, an \textit{$r$-marking sequence} $\{ x_{m}^{\left(r\right)}\} $
	of $\left\{ \gamma_{m}\right\} $ is a sequence of sets of points
	in $ T_{i}$, such that every tile of type $r$ in the partition
	$\gamma_{m}$ contains a single point of $x_{m}^{\left(r\right)}$,
	and all points in $x_{m}^{\left(r\right)}$ are distinct.
\end{defn}

\begin{thm}
	\label{thm:Types, u.d and frequencies}Let $\sigma$
	be an irreducible incommensurable multiscale substitution scheme and let $\left\{ \pi_{m}\right\} $ be a Kakutani
	sequence of partitions of $ T_{i}\in\tau_\sigma$. Let $1\leq r\leq n$. 
	\begin{enumerate}
		\item Any $r$-marking sequence $\{ x_{m}^{\left(r\right)}\} $
		of $\left\{ \pi_{m}\right\} $ is uniformly distributed in $ T_{i}$. 
		\item The ratio between the number of tiles of type $r$ in $\pi_{m}$ and
		the total number of tiles is
		\[
		\frac{\sum\limits _{h=1}^{n}q_{h}\sum\limits _{k=1}^{k_{hr}}\left(1-\left(\beta_{hr}^{(k)}\right)^{d}\right)}{\sum\limits _{h,j=1}^{n}q_{h}\sum\limits _{k=1}^{k_{hj}}\left(1-\left(\beta_{hj}^{(k)}\right)^{d}\right)}+o\left(1\right),\quad m\rightarrow\infty.
		\]
		\item The volume of the region covered by tiles of type $r$ in $\pi_{m}$
		is
		\[
		\vol T_{i}\cdot\sum_{h=1}^{n}q_{h}\sum_{k=1}^{k_{hr}}\left(\left(\beta_{hr}^{(k)}\right)^{d}\log\frac{1}{\beta_{hr}^{(k)}}\right)+o\left(1\right),\quad m\rightarrow\infty.
		\]
	\end{enumerate}
	In the equations above $q_{h}$ is any entry of column $h$ of a matrix $Q_\sigma\in M_{n}(\mathbb{R})$
	of equal rows, given by 
	\[
	Q_\sigma=\frac{\adj\left(I-M_\sigma\right)}{-\tr\left(\adj\left(I-M_\sigma\right)\cdot M_\sigma^{\prime}\right)}
	\]
	with $\left(M_\sigma\right)_{ij}=\sum_{k=1}^{k_{ij}}\left(\beta_{ij}^{(k)}\right)^{d}$
	and $\left(M_\sigma^{\prime}\right)_{ij}=\sum_{k=1}^{k_{ij}}\left(\beta_{ij}^{(k)}\right)^{d}\log\beta_{ij}^{(k)}$.
\end{thm}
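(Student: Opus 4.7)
\emph{Plan.} The strategy is to translate the Kakutani procedure into a renewal problem on the weighted directed graph $G_\sigma$ with vertex set $\{1,\ldots,n\}$ and, for each triple $(i,j,k)$, an edge $i\to j$ of weight $\log(1/\beta_{ij}^{(k)})>0$. Every substitution tile $T\in\mathscr{O}^\sigma_i$ corresponds bijectively to a finite path in $G_\sigma$ starting at $i$, with cumulative weight equal to the log of the inverse linear scale of $T$ relative to $T_i$. Under this identification, a Kakutani sequence interpolates to a continuous-time evolution in which, at time $t$, the partition consists of exactly those tiles whose cumulative path weight first crosses $t$. Since $\omega_\sigma(T_i)$ tiles $T_i$, the identity $\sum_{j,k}(\beta_{ij}^{(k)})^d=1$ shows that $M_\sigma$ is row-stochastic; irreducibility of $\sigma$ makes the induced Markov chain on types irreducible, and incommensurability is precisely the non-lattice condition on the edge weights required by the authors' path-counting results to produce renewal asymptotics without arithmetic obstructions.

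Granting these asymptotics, parts (2) and (3) follow by book-keeping. Since $I - M_\sigma$ is singular of corank one, $\adj(I - M_\sigma)$ has rank one and equals a scalar multiple of $\mathbf{1}\,q^{\top}$, where $q^{\top}M_\sigma=q^{\top}$; the denominator $-\tr(\adj(I-M_\sigma)\,M_\sigma^{\prime})$ equals the same scalar times $-q^{\top}M_\sigma^{\prime}\mathbf{1}$, which is the mean log-scale increment per substitution, i.e.\ the mean inter-arrival time of the renewal process. The common rows of $Q_\sigma$ are therefore the stationary distribution of $M_\sigma$ normalized by this mean inter-arrival time, which is exactly the renewal rate. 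For (2), I would sum over all first-passage events $(h,r,k)$ that create a tile of type $r$, matching their count against the count of all first-passage events; the rate per event combines $q_h$ with the overshoot-type factor $1-(\beta_{hr}^{(k)})^d$ coming from the distribution of the threshold location along the final edge. For (3), the analogous renewal calculation weighted by tile volume yields the factor $(\beta_{hr}^{(k)})^d\log(1/\beta_{hr}^{(k)})$: the length-biased contribution of a tile of type $r$ created via edge $(h,r,k)$, namely its volume $\vol T_i\cdot(\beta_{hr}^{(k)})^d$ integrated against its lifetime $\log(1/\beta_{hr}^{(k)})$.

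Part (1) requires an additional spatial argument. Given continuous $f$ on $T_i$ and $\varepsilon>0$, fix $M$ so large that $f$ oscillates by at most $\varepsilon$ on each tile of $\pi_M$. For $m\gg M$, the set of subtiles of any $T'\in\pi_M$ appearing in $\pi_m$ is, up to rescaling, a Kakutani partition of $T'$ at the same global volume threshold, and the limit appearing in (2) does not depend on the starting prototile because $q$ is determined entirely by $M_\sigma$. The number of sampling points of $x_m^{(r)}$ inside $T'$ is therefore asymptotic to the same positive constant times $\vol T'$ for every $T'\in\pi_M$, so the empirical average of $f$ over $x_m^{(r)}$ reduces, up to $O(\varepsilon)$, to the Riemann sum $\sum_{T'\in\pi_M}f(c_{T'})\vol T'/\vol T_i$, which converges to $\frac{1}{\vol T_i}\int_{T_i}f$ as $\pi_M$ is refined. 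The main obstacle throughout is the renewal input: the path-counting asymptotics must be uniform in the type data and must correctly identify the overshoot contributions that produce the $(1-\beta^d)$ and $(\beta^d\log(1/\beta))$ factors appearing in the theorem --- this is precisely what the incommensurability hypothesis enables, and where I anticipate relying most heavily on the authors' prior work.
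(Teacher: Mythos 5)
Your proposal is correct and follows essentially the same route as the paper: both reduce the theorem to the metric-path-counting asymptotics on the associated incommensurable graph (Theorems 1 and 2 of \cite{Graphs}, which are precisely the renewal-type results you describe, proved there via Laplace transforms and the Wiener--Ikehara theorem), read off parts (2) and (3) from the per-edge factors $1-\beta^d$ and $\beta^d\log(1/\beta)$ together with the rank-one structure $Q_\sigma=\mathbf{1}q^{\top}$, and obtain part (1) by showing that the marked-point count inside each fixed tile is asymptotically proportional to its volume. The only cosmetic differences are that the paper packages your Riemann-sum step for (1) as a separate lemma (``counting implies uniform distribution,'' which handles the boundary-measure issue you gloss over), and makes your renewal-reward derivation of the $\beta^d\log(1/\beta)$ factor in (3) rigorous by invoking the random-walk version of the counting theorem with edge probabilities $p_{i\varepsilon}=\beta^d$.
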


For Kakutani sequences generated by commensurable schemes, which are
described in Section \ref{sec:Incommensurable and commensurable and examples}
and include schemes of fixed scale, Theorem \ref{thm:Types, u.d and frequencies}
does not necessarily hold, and the limits appearing in it may not
even exist, see Example \ref{example:penrose statistics}.

\subsection{Generation sequences of partitions}

Generation sequences of partitions are generally not uniformly distributed. For example the generation
sequence generated by the $\alpha$-Kakutani multiscale substitution
scheme is not uniformly distributed for any $\alpha\not=\frac{1}{2}$, see also Example \ref{ex:the alpha-Kakutani construction}.

\begin{defn}
	A multiscale substitution scheme is \textit{fixed scale}
	if there exists $\alpha\in\left(0,1\right)$ so that
	\[
	\alpha_{ij}^{(k)}=\alpha
	\]
	for all $i,j$ and $k$. In this case $\alpha$ is called the \textit{contraction constant}.
\end{defn}

Clearly, fixed scale substitution schemes are commensurable, since for every $1\le i \le n$ and every tile $T$ of type $i$ in $\mathscr{O}^\sigma_i$, the ratio $\frac{\vol T}{\vol T_i}$ is an integer power of $\alpha$.
\begin{rem}
	Fixed scale substitution schemes are the classical setup of substitution
	tilings.
\end{rem}

\begin{thm}
	\label{Thm: main result for generation sequences}Let $\sigma$
	be an irreducible fixed scale substitution scheme and let $\left\{ \delta_{k}\right\} $ be a
	generation sequence of partitions of $ T_{i}\in\tau_\sigma$.
	Then $\left\{ \delta_{k}\right\} $ is uniformly distributed in $ T_{i}$.
\end{thm}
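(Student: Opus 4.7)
Proof proposal. The plan is to reduce the uniform distribution of the generation sequence $\{\delta_k\}$ to an equidistribution statement inside each tile type, and then to prove the per-type statement by combining Perron--Frobenius asymptotics for the substitution matrix with a Riemann-sum argument. Let $M\in M_n(\mathbb{R})$ be the substitution matrix with $M_{ij}=\#\{T\in\omega_\sigma(T_i):T\text{ is of type }j\}$. Since $\omega_\sigma(T_i)$ is a fixed list, $M$ does not depend on the rule chosen at any step, so the vector $N(k)=(N_1(k),\dots,N_n(k))$ of type counts in $\delta_k$ satisfies $N(k)=N(0)M^k$. Irreducibility of $\sigma$ is exactly irreducibility of $M$. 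Summing volumes over $\omega_\sigma(T_i)$ gives $\sum_j M_{ij}\alpha^{d}\vol T_j=\vol T_i$, so the column vector $v=(\vol T_1,\dots,\vol T_n)^{\top}$ is a right Perron eigenvector of $M$ with eigenvalue $\lambda=\alpha^{-d}$, and Perron--Frobenius supplies a positive left eigenvector $u$.

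For the reduction, every type-$j$ tile in $\delta_k$ has the common volume $\alpha^{kd}\vol T_j$, so the count-normalized measure $\mu_k^{(j)}$ on type-$j$ tiles coincides with the volume-normalized measure on that subcollection. Writing $\mu_k=\sum_j(N_j(k)/|x_k|)\,\mu_k^{(j)}$ as a convex combination, it suffices to show that for each $j$,
\[
\mu_k^{(j)}(f)\;\longrightarrow\;\frac{1}{\vol T_i}\int_{T_i}f,\qquad f\in C(T_i).
\]
Fix $\epsilon>0$ and continuous $f$, and choose $L$ so large that every tile in $\delta_L$ has diameter less than a modulus-of-continuity threshold bounding the oscillation of $f$ by $\epsilon$; this is possible because linear scales in $\delta_L$ are of order $\alpha^{L}$. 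For $k>L$, each tile of $\delta_k$ lies in a unique ancestor $T'\in\delta_L$, and the number of type-$j$ descendants of a type-$j'$ ancestor $T'$ equals $(M^{k-L})_{j'j}$ regardless of intermediate rule choices. Uniform continuity then gives
\[
\frac{1}{N_j(k)}\sum_{T\text{ of type }j}f(x_T)\;=\;\sum_{T'\in\delta_L}\frac{(M^{k-L})_{j'j}}{N_j(k)}\,f(y_{T'})\;+\;O(\epsilon),
\]
for any choice $y_{T'}\in T'$. The Perron--Frobenius limit $(M^{k-L})_{j'j}\lambda^{-(k-L)}\to v_{j'}u_j/(u\cdot v)$, together with $N_j(k)=\sum_{j'}N_{j'}(L)(M^{k-L})_{j'j}$ and the volume identity $\sum_{j'}N_{j'}(L)\vol T_{j'}=\lambda^{L}\vol T_i$, yields $(M^{k-L})_{j'j}/N_j(k)\to \vol T'/\vol T_i$, so the right-hand side is within $\epsilon$ of the Riemann sum $\sum_{T'}(\vol T'/\vol T_i)f(y_{T'})$ approximating $\vol T_i^{-1}\int f$. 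The per-type statement, and hence the theorem, follows by letting $\epsilon\to 0$.

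The principal technical obstacle lies in the Perron--Frobenius input: irreducibility of $\sigma$ gives irreducibility but not necessarily primitivity of $M$, so $M^{k}\lambda^{-k}$ may fail to converge if $M$ has a period $p>1$. In that case I would argue the per-type statement along each arithmetic progression $k\equiv r\pmod{p}$ separately, using that $M^{p}$ is primitive on each cyclic class; because the limiting ratio $\vol T'/\vol T_i$ that appears in the final Riemann sum is independent of the residue $r$, the full sequence $\mu_k^{(j)}(f)$ still converges to $\vol T_i^{-1}\int f$. Granting this bookkeeping, the only ingredients are classical Perron--Frobenius, the key structural identity that the volume vector is itself the right Perron eigenvector with eigenvalue $\alpha^{-d}$, and the uniform continuity of $f$.
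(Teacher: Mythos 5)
Your argument is correct, and its Perron--Frobenius core coincides with the paper's: the substitution matrix $S_\sigma$ (your $M$) has the volume vector as a right Perron eigenvector with eigenvalue $\alpha^{-d}$, the Perron projection controls $S_\sigma^{k}$, and non-primitivity is handled through the cyclic decomposition and the primitive blocks of $S_\sigma^{p}$. Where you genuinely differ is in the reduction. The paper first proves a general lemma (Lemma \ref{Lemma: counting implies uniform distribution}) reducing uniform distribution to the single counting statement $\left|x_{k}\cap T\right|/\left|x_{k}\right|\rightarrow\vol T/\vol T_{i}$ for every tile $T$ appearing in the sequence, and the proof of the theorem is then the one-line computation $\left|x_{k}\cap T\right|/\left|x_{k}\right|=(S_\sigma^{k-k_{0}}\cdot\boldsymbol{1})_{j}/(S_\sigma^{k}\cdot\boldsymbol{1})_{i}$: the ancestor is fixed and all of its descendants are counted. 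You instead fix the descendant type $j$, distribute the type-$j$ tiles of $\delta_{k}$ over their level-$L$ ancestors, and run the weak-$*$ convergence directly through uniform continuity of $f$ and a Riemann sum over $\delta_{L}$, with a convex-combination step absorbing the (possibly non-convergent) type proportions $N_{j}(k)/\left|x_{k}\right|$. Your route is self-contained and bypasses the boundary-measure estimate in Lemma \ref{Lemma: counting implies uniform distribution}; the paper's route isolates a lemma it reuses for the commensurable and incommensurable Kakutani sequences, where the counts come from path-counting on graphs rather than from powers of $S_\sigma$. Two points worth making explicit in your write-up: the limit $(M^{k-L})_{j'j}/N_{j}(k)\rightarrow v_{j'}/(\lambda^{L}\vol T_{i})$ is obtained by normalizing numerator and denominator by $\lambda^{k-L}$ and using positivity of the relevant left-eigenvector entry so the denominator's limit is nonzero; and in the non-primitive case $\mu_{k}^{(j)}$ is defined only along the residues $k\equiv r\ (\mod\ p)$ for which type $j$ occurs, but since it tends to the same limit on each such progression and the weights sum to $1$, the full convex combination still converges.
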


The counterpart of Theorem \ref{thm:Types, u.d and frequencies} for
generation sequences generated by fixed scale schemes do not necessarily
hold without an additional assumption of primitivity, as demonstrated by
Corollary \ref{cor: generation of non-primitive is a union of generation}.
In Section \ref{sec: sequences generated by fixed scaled} we prove
the special case concerning primitive schemes, see Theorem \ref{Thm: frequencies for generation fixed scale}.

\subsection{Additional background and related topics}

Kakutani's original proof of uniform distribution of the $\alpha$-Kakutani
sequences given in \cite{Kakutani} as well as Adler and Flatto's
proof given in \cite{Adler Flatto}, both use ergodic and measure theoretical
tools and are of a different nature than the proof presented here.
Various generalizations of the Kakutani splitting procedure have been
studied, and in recent years the procedure has been generalized in
two directions. In \cite{CarboneVolcic}, Carbone and Vol{\v c}i{\v c}
define a generalization of the splitting procedure which generates
sequences of partitions of higher dimensional sets. Vol{\v c}i{\v c}
also studies in \cite{Volcic} an extended Kakutani one dimensional
splitting procedure in which $\pi_{1}$ consists of any finite number
of subintervals of $\mathcal{I}$, with subsequent partitions defined
by composing the same splitting rule with a dilation. In both cases
the resulting sequences of partitions are shown to be uniformly distributes,
and while the higher dimensional construction introduced in \cite{CarboneVolcic}
is different from the one presented here, the extended one dimensional
construction can be interpreted as a multiscale substitution scheme
on a single prototile in $\mathbb{R}^{1}$. This construction is further
studied in \cite{Aistleitner Hofer}, \cite{Drmota Infusino} and
in \cite{Infusino}, which also contains a survey of results on
uniform distribution of points and partitions.

Multiscale substitution schemes and their associated graphs were originally
considered by the author in a joint work in progress with Yaar Solomon
\cite{SmiSol} concerning the study of a new family of tilings
of the Euclidean space. Denote by $\mathcal{X}$ the collection of
closed sets in $\mathbb{R}^{d}$, then equipped with an appropriate
topology $\mathcal{X}$ is compact. The multiscale scheme is used
to define tilings of bounded sets of growing diameter in $\mathbb{R}^{d}$,
which constitute a sequence of closed sets in $\mathcal{X}$. Tilings
of the entire space are defined as partial limits of such sequences,
and the space of all tilings of $\mathbb{R}^{d}$ constructed this
way is denoted by $\mathcal{X}_{H}$, which is a compact subspace
of $\mathcal{X}$. Our forthcoming paper contains more information
about the tilings $\tau\in\mathcal{X}_{H}$ themselves as well as
the tiling space $\mathcal{X}_{H}$. Examples include Sadun's generalizations of the pinwheel tiling presented in \cite{Sadun}, see also Example 5.8 and Remark 5.9 below. In addition, the construction described in the Section A.5 in Appendix A of \cite{Fusion} is closely related to the $\frac{1}{3}$-Kakutani scheme given in Example 2.1 below, and is studied there in the context of fusion tilings of infinite local complexity. 

This work is inspired by the theory of aperiodic tilings of Euclidean
space, and more specifically by the hierarchical construction known
as substitution, which is used to generate some well known aperiodic
tilings, most famously the Penrose tiling. We visit in examples some
well known substitutions, including the generalized pinwheel \cite{Sadun},
the Rauzy fractal \cite{Rauzy} and the Penrose-Robinson
substitution, see \cite{BaakeGrimm} for more about Penrose-type
constructions, substitutions and other aperiodic tilings. This work is also related to the construction and study of self-similar fractal strings \cite{Lapidus fractals} and of graph-directed fractal sprays \cite{Turkish fractal guys}.

\subsection{Acknowledgments }

I would like to thank Zemer Kosloff, Yaar Solomon, Anna Vaskevich
and Barak Weiss for their valuable contribution to this work, and
Avner Kiro, Uzy Smilansky and Aljo{\v s}a Vol{\v c}i{\v c} for
their insightful remarks. I am grateful to the anonymous referee for constructive comments and suggestions. This work is part of the author's
PhD thesis and was supported by the Israel Science Foundation grant
number 2095/15 and by BSF grant 2016256.

\section{Examples}\label{sec: examples}

It is convenient to represent substitution schemes visually as a set of partitions of the elements of $\tau_\sigma$, where labels are visualized using colors, as is done throughout this section. Every such representation involves a choice of a substitution rule $\varrho_\sigma\in\Sigma_\sigma$, which in general may consist of various distinct substitution rules. A substitution scheme, as well as any sequence of partitions it generates, is said to be of \textit{constant configuration} if $\Sigma_\sigma$ consists of a single element. Although such examples are relevant to the study of hierarchical structures such as the tilings studied in \cite{SmiSol}, in this paper constant configuration is nowhere assumed. Nevertheless, unless otherwise stated, all illustrations of sequences of partitions below are of constant configuration, as these tend to be aesthetically pleasing. 

\begin{example}[$\alpha$-Kakutani]\label{ex:the alpha-Kakutani construction}
	The $\alpha$-Kakutani sequence of partitions can be represented as a Kakutani sequence of partitions generated by a multiscale substitution
	scheme of constant configuration in $\mathbb{R}^{1}$. This $\alpha$-Kakutani scheme is defined
	on a single prototile $\tau_\alpha=\left\{ \mathcal{I}\right\}$, with  $\omega_\alpha(\mathcal{I})=\left(\alpha\mathcal{I},(1-\alpha)\mathcal{I}\right)$ and $\Sigma_\alpha=\{\varrho_\alpha\}$,  where
	$\varrho_\alpha(\mathcal{I})=\alpha\mathcal{I}\sqcup\left(\alpha+(1-\alpha)\mathcal{I}\right)$.
	For example, the $\frac{1}{3}$-Kakutani scheme of constant configuration and substitution rule as illustrated in Figure \ref{fig: Kakutani 1/3-partition as multiscale substitution scheme},
	generates the $\frac{1}{3}$-Kakutani sequence described in Figure
	\ref{fig:a few elements in Kakutani third partition}. 
	
	\begin{figure}[H]
		\includegraphics[scale=1.4]{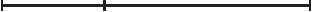}\caption{\label{fig: Kakutani 1/3-partition as multiscale substitution scheme}The
			$\frac{1}{3}$-Kakutani scheme on $\mathcal{I}$.}
	\end{figure}
	A $\frac{1}{3}$-Kakutani sequence of partitions of non-constant configuration generated by a similar scheme but with a second substitution rule $\varrho'_{1/3}(\mathcal{I})=\frac{2}{3}\mathcal{I}\sqcup\left(\frac{2}{3}+\frac{1}{3}\mathcal{I}\right)$ added to $\Sigma_{1/3}$, is shown in Figure \ref{fig: Kakutani kakutani partititions second version}.
	Note that indeed this sequence $\left\{ \pi_{m}\right\} $ is not of constant configuration,
	because when passing from $\pi_{0}$ to $\pi_{1}$ the substitution rule $\varrho'_{1/3}$ is applied, but when passing from $\pi_{1}$ to $\pi_{2}$ the
	substitution is done via $\varrho_{1/3}$. 
	\begin{figure}[H]
		\includegraphics[scale=1.56]{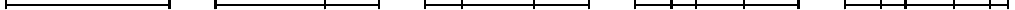}\caption{\label{fig: Kakutani kakutani partititions second version}A $\frac{1}{3}$-Kakutani
			sequence of partitions of non-constant configuration.}
	\end{figure}
	A generation sequence of partition generated by the $\frac{1}{3}$-Kakutani scheme is
	illustrated in Figure \ref{fig: Kakutani generation partition}.
	\begin{figure}[H]
		\includegraphics[scale=1.56]{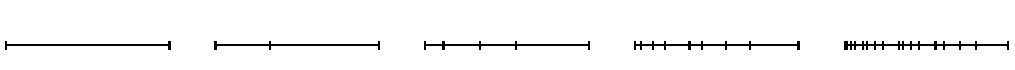}\caption{\label{fig: Kakutani generation partition}A generation sequence of
			partitions generated by the $\frac{1}{3}$-Kakutani scheme on $\mathcal{I}$.}
	\end{figure}
\end{example}

\begin{example}[Penrose-Robinson]
	\label{Penrose substitution example}The Penrose-type fixed scale
	substitution scheme, also known as the Penrose-Robinson substitution
	scheme, is described in Figure \ref{fig: Penrose fixed scale substitution scheme}.
	The prototiles are a tall triangle $\mathcal{T}$ and a short
	triangle $\mathcal{S}$ in $\mathbb{R}^{2}$, and the contraction
	constant is $\alpha=\frac{1}{\varphi}$, where $\varphi$ is the golden
	ratio. This substitution scheme is well known for generating aperiodic
	tilings of the Euclidean plane, and has been studied extensively,
	see Chapter 6 in \cite{BaakeGrimm} and references within.
	
	\begin{figure}[H]
		\includegraphics[scale=1.5]{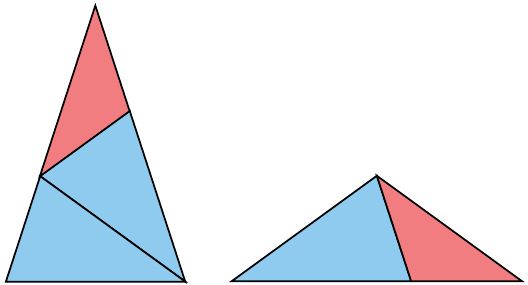}\caption{\label{fig: Penrose fixed scale substitution scheme}The Penrose-Robinson
			fixed scale substitution scheme.}
	\end{figure}
	Figure \ref{fig:Kakutani sequence of penrose} illustrates the first
	few partitions in a Kakutani sequence $\left\{ \pi_{m}\right\}$
	of $\mathcal{T}$. 
	\begin{figure}[H]
		\includegraphics[scale=1.5]{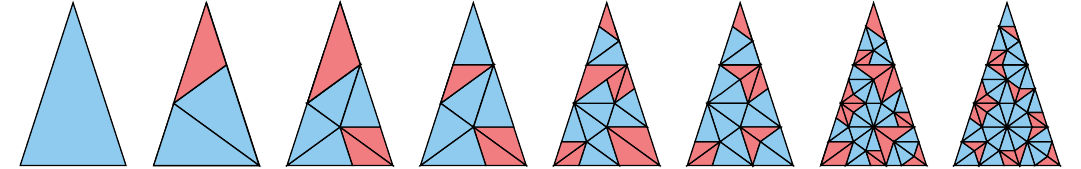}\caption{\label{fig:Kakutani sequence of penrose}A Kakutani sequence of partitions generated by the Penrose-Robinson scheme.}
	\end{figure}
	The first few partitions in a generation sequence of partitions $\left\{ \delta_{k}\right\} $
	of $\mathcal{T}$ are shown in
	Figure \ref{fig: penrose generation sequence}. 
	\begin{figure}[H]
		\includegraphics[scale=1.5]{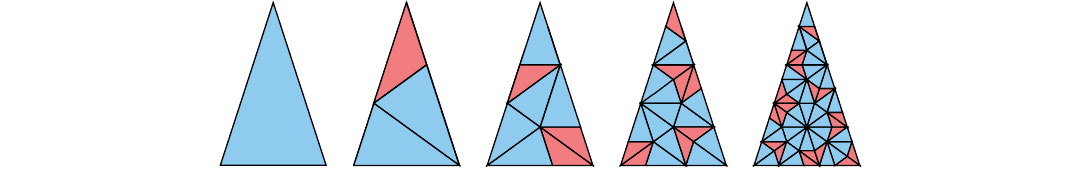}\caption{\label{fig: penrose generation sequence}A generation sequence of
			partitions generated by the Penrose-Robinson scheme.}
	\end{figure}
	
	By Theorem \ref{Thm: main result for generation sequences} the sequence
	$\left\{ \delta_{k}\right\}$ is uniformly distributed in $\mathcal{T}$. The two frequencies of types, as defined
	in Theorem \ref{thm:Types, u.d and frequencies} for the incommensurable
	case, can be also calculated using the formulas given in Theorem \ref{Thm: frequencies for generation fixed scale}
	\[
	\lim\limits _{k\rightarrow\infty}\frac{\left|\left\{ \text{Short triangles \ensuremath{\in\delta_{k}}}\right\} \right|}{\left|\left\{ \text{Tiles \ensuremath{\in\delta_{k}}}\right\} \right|}=\frac{1}{\varphi+1},\,\,\,\lim\limits _{k\rightarrow\infty}\frac{\vol\left(\bigcup\left\{ \text{Short triangles \ensuremath{\in\delta_{k}}}\right\} \right)}{\vol\left(\bigcup\left\{ \text{Tiles \ensuremath{\in\delta_{k}}}\right\} \right)}=\frac{1}{\varphi+2}.
	\]
	Further details are given in Example \ref{example:penrose statistics},
	where it is shown that in the case of the the Kakutani sequence $\left\{ \pi_{m}\right\} $
	illustrated in Figure \ref{fig:Kakutani sequence of penrose}, the corresponding limits do not exist.
\end{example}

\begin{rem}
	We note that the fact that the generation sequence
	of partitions $\left\{ \delta_{k}\right\}$ in Figure \ref{fig: penrose generation sequence} is a subsequence of the Kakutani sequence of partitions $\left\{ \pi_{m}\right\}$ in Figure \ref{fig:Kakutani sequence of penrose} is coincidental. In general
	this is not the case, as can be easily demonstrated by examples in which
	$\tau_\sigma$ contains two prototiles $ T_{i}$ and $T_{j}$
	for which $\vol\left(\alpha T_{i}\right)>\vol T_{j}$,
	where $\alpha$ is the contraction constant. 
\end{rem}

\begin{example}[The rectangle and the square]
	\label{example: multiscale substitution scheme on a rectangele and a square.}The
	construction shown in Figure \ref{fig: multiscale substitution scheme on rectangle and square}
	is an example of a multiscale substitution scheme $\sigma$ in $\mathbb{R}^{2}$ with two substitution rules $\Sigma_\sigma=\{\varrho_\sigma^{(1)},\varrho_\sigma^{(2)}\}$, where $\tau_\sigma=(\mathcal{R},\mathcal{S})$ and $\omega_\sigma$ are as illustrated in Figure \ref{fig: RS scheme}. 
	\begin{figure}[H]
		\includegraphics[scale=1.5]{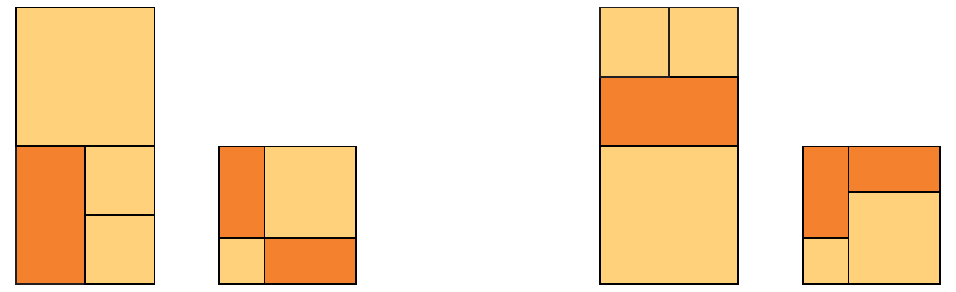}\caption{\label{fig: multiscale substitution scheme on rectangle and square}Two distinct substitution rules $\varrho_\sigma^{(1)}$ and $\varrho_\sigma^{(2)}$ in $\Sigma_\sigma$. }
	\end{figure}
	The substitution tiles are 
	$\omega_\sigma(\mathcal{R})=(\frac{1}{2}\mathcal{R},\mathcal{S},\frac{1}{2}\mathcal{S},\frac{1}{2}\mathcal{S})$ and 
	$\omega_\sigma(\mathcal{S})=(\frac{1}{3}\mathcal{R},\frac{1}{3}\mathcal{R},\frac{1}{3}\mathcal{S},\frac{2}{3}\mathcal{S}),
	$
	and the scheme is clearly incommensurable because $\frac{1}{3}\mathcal{S},\frac{2}{3}\mathcal{S}\in\omega_\sigma(\mathcal{S})$. The constants of substitution are
	\[
	\begin{array}{ll}
	\beta_{11}^{\left(1\right)}=\frac{1}{2} & \beta_{12}^{\left(1\right)}=\frac{1}{\sqrt{2}}, \beta_{12}^{\left(2\right)}=\beta_{12}^{\left(3\right)}=\frac{1}{2\sqrt{2}}\\
	\beta_{21}^{\left(1\right)}=\beta_{21}^{\left(2\right)}=\frac{\sqrt{2}}{3} & \beta_{22}^{\left(1\right)}=\frac{1}{3}, \beta_{22}^{\left(2\right)}=\frac{2}{3}
	\end{array}
	\]
	The first few elements
	of the constant configuration Kakutani sequence of partitions of the rectangle $\mathcal{R}$, with $\sigma=(\tau_\sigma,\omega_\sigma,\varrho_\sigma^{(1)})$ and $\varrho_\sigma^{(1)}$ as in the left hand side of Figure \ref{fig: multiscale substitution scheme on rectangle and square}, are illustrated in Figure \ref{fig: first element in sequence of partitions of square and rectangle}.
	\begin{figure}[H]
		\includegraphics[scale=1.5]{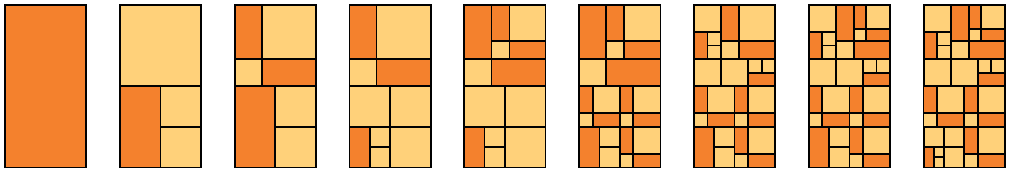}\caption{\label{fig: first element in sequence of partitions of square and rectangle}A
			Kakutani sequence of partitions of the rectangle $\mathcal{R}$.}
	\end{figure}
	
	The matrices $M_\sigma$ and $Q_\sigma$
	defined in Theorem \ref{thm:Types, u.d and frequencies} are given
	by
	\[
	M_\sigma=\left(\begin{array}{cc}
	\frac{1}{4} & \frac{3}{4}\\
	\frac{4}{9} & \frac{5}{9}
	\end{array}\right)\quad \quad
	Q_\sigma=\frac{\adj\left(I-M_\sigma\right)}{-\tr\left(\adj\left(I-M_\sigma\right)\cdot M_\sigma^{\prime}\right)}=\frac{1}{\frac{3}{4}\log3-\frac{1}{9}\log2}\left(\begin{array}{cc}
	\frac{4}{9} & \frac{3}{4}\\
	\frac{4}{9} & \frac{3}{4}
	\end{array}\right).
	\]
	In any Kakutani sequence of partitions generated by this scheme, either
	of the rectangle $\mathcal{R}$ or of the square $\mathcal{S}$, we get
	\[
	\lim\limits _{m\rightarrow\infty}\frac{\left|\left\{ \text{Squares \ensuremath{\in}\ensuremath{\ensuremath{\pi_{m}}}}\right\} \right|}{\left|\left\{ \text{Tiles \ensuremath{\in\pi_{m}}}\right\} \right|}=\frac{25}{43},\,\,\,\lim\limits _{m\rightarrow\infty}\frac{\vol\left(\bigcup\left\{ \text{Squares \ensuremath{\in}\ensuremath{\ensuremath{\pi_{m}}}}\right\} \right)}{\vol\left(\bigcup\left\{ \text{Tiles \ensuremath{\in\pi_{m}}}\right\} \right)}=\frac{\frac{5}{12}\log3-\frac{1}{18}\log2}{\frac{3}{4}\log3-\frac{1}{9}\log2}.
	\]
	Thus for large $m$ roughly $58\%$ of the tiles in $\pi_{m}$ are
	squares, covering approximately $56\%$ of the area.
\end{example}

\section{Preliminaries and reduction to counting}

\subsection{Constants of substitution and equivalence of multiscale substitution schemes}
\begin{lem}
	\label{lem: constants and volumes}Let $\sigma$ be a multiscale substitution
	scheme and let $\alpha_{ij}^{(k)}$ and $\beta_{ij}^{(k)}$
	be the constants appearing in Definition \ref{def: multicale substitution scheme}. Then
	\[
	\begin{array}{cc}
	\left(1\right) & \vol\left(\alpha_{ij}^{(k)} T_{j}\right)=\vol\left(\beta_{ij}^{(k)} T_{i}\right)\\
	\left(2\right) & \sum\limits _{j=1}^{n}\sum\limits _{k=1}^{k_{ij}}\left(\beta_{ij}^{(k)}\right)^{d}=1.
	\end{array}
	\]
\end{lem}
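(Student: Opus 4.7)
The plan is to prove the two statements in turn, using only the definitions given: the scaling behavior of Lebesgue measure in $\R^d$ (namely $\vol(\lambda S)=\lambda^d\vol S$ for $\lambda>0$ and $S$ measurable) together with equation \eqref{eq: constants of substitution} defining $\beta_{ij}^{(k)}$ and the hypothesis that $\omega_\sigma(T_i)$ tiles $T_i$.

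For part (1), I would simply raise the defining equation $\beta_{ij}^{(k)}=\left(\frac{\vol T_j}{\vol T_i}\right)^{1/d}\alpha_{ij}^{(k)}$ to the $d$-th power, obtaining $(\beta_{ij}^{(k)})^d\vol T_i=(\alpha_{ij}^{(k)})^d\vol T_j$, and then rewrite each side using the scaling property of Lebesgue measure as $\vol(\beta_{ij}^{(k)}T_i)$ and $\vol(\alpha_{ij}^{(k)}T_j)$ respectively. This is a one-line algebraic manipulation.

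For part (2), the key input is that $\omega_\sigma(T_i)$ tiles $T_i$, that is, there is a partition of $T_i$ whose members are isometric to the listed tiles $\alpha_{ij}^{(k)}T_j$. Since the boundary of each tile has measure zero and interiors are pairwise disjoint, additivity of Lebesgue measure yields $\vol T_i=\sum_{j=1}^{n}\sum_{k=1}^{k_{ij}}\vol(\alpha_{ij}^{(k)}T_j)$. Applying part (1) to each summand rewrites this as $\vol T_i=\sum_{j,k}(\beta_{ij}^{(k)})^d\vol T_i$, and dividing through by $\vol T_i>0$ gives the claim.

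There is no real obstacle here: the content is purely bookkeeping between the two normalizations of the scaling constants, one measured in intrinsic sizes of the prototiles ($\alpha_{ij}^{(k)}$) and the other rescaled so that the substituted pieces are expressed as fractions of $T_i$ itself ($\beta_{ij}^{(k)}$). The only points worth noting carefully are that $\vol T_i>0$ (built into the definition of a tile) and that the boundaries of the substitution pieces do not contribute to the volume sum, which is guaranteed by the tile axiom stating that boundaries have measure zero.
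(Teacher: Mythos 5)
Your proof is correct and follows exactly the same route as the paper's (one-line) argument: part (1) is the $d$-th power of the defining equation for $\beta_{ij}^{(k)}$ combined with the scaling of Lebesgue measure, and part (2) follows from the fact that the substitution tiles partition $T_i$ so their volumes sum to $\vol T_i$. You have merely spelled out the details the paper leaves implicit.
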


\begin{proof}
	This follows from Equation \eqref{eq: constants of substitution}
	and the fact that the volume of $ T_{i}$ is equal to the
	sum of volumes of the tiles in $ \omega_\sigma(T_i)$.
\end{proof}
\begin{defn}
	A multiscale substitution scheme $\sigma$ is called \textit{normalized} if all prototiles in $\tau_\sigma$ are of volume $1$. In a normalized scheme 
	\[
	\alpha_{ij}^{(k)}=\beta_{ij}^{(k)}
	\]
	for all $i,j$ and $k$. Two multiscale substitution schemes $\sigma_1$ and $\sigma_2$ are \textit{equivalent}
	if $\tau_{\sigma_{1}}$ and $\tau_{\sigma_{2}}$ consist of the same tiles
	up to scale changes, that is if 
	\[
	\tau_{\sigma_{1}}=\left( T_{1},\ldots, T_{n}\right)\Leftrightarrow\tau_{\sigma_{2}}=\left(\lambda_{1} T_{1},\ldots,\lambda_{n} T_{n}\right),
	\]
	and the tilings of the prototiles given by the substitution rules
	are the same up to a change of scales defined accordingly. Every equivalence
	class includes a single normalized scheme.
\end{defn}

For example, the non-normalized substitution scheme illustrated in Figure \ref{fig: RS scheme} is equivalent to the normalized one described in \ref{fig: Graph associated with MSS of rectangle and square}. Clearly the constants $\alpha_{ij}^{(k)}$ may vary between
equivalent schemes, as they depend on the volumes of the prototiles
and their ratios, but sequences of partition generated by equivalent
schemes are identical up to a uniform rescaling of all tiles in all
partitions. The next lemma follows from a straightforward computation
using Equation \eqref{eq: constants of substitution}.
\begin{lem}\label{lem: equivalent schemes have the same constants}
	Equivalent multiscale substitution schemes have the same constants
	of substitution $\beta_{ij}^{(k)}$. 
\end{lem}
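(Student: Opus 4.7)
The plan is to unwind the definition of equivalence to see exactly how the scaling constants $\alpha_{ij}^{(k)}$ transform when passing between two equivalent schemes, and then to observe that the volume-ratio factor in \eqref{eq: constants of substitution} cancels precisely that transformation.

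First I would fix two equivalent schemes $\sigma_1$ and $\sigma_2$ with $\tau_{\sigma_1} = (T_1,\ldots,T_n)$ and $\tau_{\sigma_2} = (\lambda_1 T_1,\ldots,\lambda_n T_n)$, and denote their scaling constants by $\alpha_{ij}^{(k)}$ and $\tilde{\alpha}_{ij}^{(k)}$ respectively. Inside $T_i$, the scheme $\sigma_1$ places a tile isometric to $\alpha_{ij}^{(k)} T_j$. Applying the uniform rescaling by $\lambda_i$ to the whole of $T_i$ produces the prototile $\lambda_i T_i$ of $\sigma_2$, and the corresponding substitution tile sitting inside it must have actual size $\lambda_i \alpha_{ij}^{(k)}$ times $T_j$. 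Since in $\sigma_2$ that tile is described as $\tilde{\alpha}_{ij}^{(k)}(\lambda_j T_j)$, matching sizes yields
\[
\tilde{\alpha}_{ij}^{(k)} = \frac{\lambda_i}{\lambda_j}\,\alpha_{ij}^{(k)}.
\]

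Second, I would substitute this into the defining formula \eqref{eq: constants of substitution} for $\tilde{\beta}_{ij}^{(k)}$. The volume ratio satisfies
\[
\frac{\vol(\lambda_j T_j)}{\vol(\lambda_i T_i)} = \left(\frac{\lambda_j}{\lambda_i}\right)^{d}\frac{\vol T_j}{\vol T_i},
\]
so its $1/d$-th power contributes a factor $\lambda_j/\lambda_i$, which exactly cancels the factor $\lambda_i/\lambda_j$ carried by $\tilde{\alpha}_{ij}^{(k)}$, leaving $\tilde{\beta}_{ij}^{(k)} = \beta_{ij}^{(k)}$.

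There is no serious obstacle here; the only subtle point is being careful about which scale lives where, i.e.\ identifying correctly that under equivalence the substitution tile $\alpha_{ij}^{(k)} T_j$ of $\sigma_1$ corresponds to $\lambda_i \alpha_{ij}^{(k)} T_j = \tilde{\alpha}_{ij}^{(k)} \lambda_j T_j$ in $\sigma_2$. Once the relation between $\alpha_{ij}^{(k)}$ and $\tilde{\alpha}_{ij}^{(k)}$ is pinned down, the invariance of $\beta_{ij}^{(k)}$ is a one-line computation.
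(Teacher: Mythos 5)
Your proof is correct and is exactly the argument the paper intends: the paper gives no explicit proof, stating only that the lemma ``follows from a straightforward computation using Equation \eqref{eq: constants of substitution},'' and your derivation of $\tilde{\alpha}_{ij}^{(k)}=\frac{\lambda_i}{\lambda_j}\alpha_{ij}^{(k)}$ followed by the cancellation against the volume-ratio factor is precisely that computation.
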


The assumption that the constants of substitution $\beta_{ij}^{(k)}$
are a finite set of constants which are all strictly smaller than
$1$ yields the following statement.
\begin{lem}\label{lem: tiles shrink in sequences}
	Let $\sigma$ be a multiscale substitution scheme and let $\left\{ \gamma_{m}\right\} $ be either
	a Kakutani or a generation sequence of partitions of $ T_{i}\in\tau_\sigma$. Then for every $\varepsilon>0$ there exists $m_{0}\in\mathbb{N}$ such
	that all tiles in $\gamma_{m}$ are of diameter less than $\varepsilon$
	for all $m\geq m_{0}$.
\end{lem}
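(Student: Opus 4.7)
The plan is to reduce everything to showing that the maximum tile volume in $\gamma_m$ tends to $0$ as $m\to\infty$, and then convert this to a diameter bound using the fact that there are only finitely many prototile shapes. Concretely, every tile in any $\gamma_m$ is isometric to $\alpha T_j$ for some $1\le j\le n$ and some $\alpha>0$, with $\vol(\alpha T_j)=\alpha^d\vol T_j$. Setting $v_{\min}=\min_j\vol T_j>0$ and $D_{\max}=\max_j\diam T_j<\infty$, any tile $T$ with $\vol T\le M$ satisfies $\diam T\le (M/v_{\min})^{1/d}\,D_{\max}$, so it suffices to prove $M_m:=\max_{T\in\gamma_m}\vol T\to 0$.

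The main technical ingredient is the observation, obtained by iterating Lemma \ref{lem: constants and volumes}(1), that a tile produced from $T_i$ by a chain of $g$ substitutions $T_i=T_{j_0}\to T_{j_1}\to\cdots\to T_{j_g}$ has volume
\[
\vol T_i\cdot\prod_{\ell=1}^{g}\bigl(\beta_{j_{\ell-1}j_\ell}^{(k_\ell)}\bigr)^d.
\]
Since the $\beta_{ij}^{(k)}$ form a finite set in $(0,1)$, the maximum $\beta^*:=\max_{i,j,k}\beta_{ij}^{(k)}$ satisfies $\beta^*<1$, and any tile reached by $g$ substitutions has volume at most $\vol T_i\,(\beta^*)^{dg}$. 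For the generation sequence $\{\delta_k\}$ every tile has $g=k$, so $M_k\le\vol T_i\,(\beta^*)^{dk}\to 0$, which settles that case immediately.

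For the Kakutani sequence $\{\pi_m\}$ I would argue in two steps. First, $M_m$ is \emph{strictly} decreasing: in the passage $\pi_m\to\pi_{m+1}$ every tile of volume $M_m$ is substituted, producing tiles of volume at most $M_m(\beta^*)^d<M_m$, while all other tiles of $\pi_m$ already had volume $<M_m$. Second, for any fixed threshold $V>0$ the set of possible tile volumes $\ge V$ is finite: such a volume is a product of at most $K_0=\lfloor\log(V/\vol T_i)/(d\log\beta^*)\rfloor$ factors of the form $(\beta_{ij}^{(k)})^d$, and only finitely many such products exist (the alphabet of $\beta_{ij}^{(k)}$ is finite and the length is bounded).

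Combining the two, the strictly decreasing sequence $\{M_m\}$ can take only finitely many values in $[V,\infty)$, hence must drop below $V$ in finitely many steps; since $V>0$ was arbitrary, $M_m\to 0$. Feeding this into the diameter estimate above yields the lemma. The only non-routine step is establishing $M_m\to 0$ for the Kakutani case, and within that the subtle point is the discreteness argument: once one notices that volumes $\ge V$ come from chains of bounded length and hence form a finite set, the strict monotonicity of $M_m$ forces convergence to $0$.
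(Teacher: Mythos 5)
Your argument is correct and is precisely the fleshing-out of the justification the paper gives for this lemma, which it states without proof immediately after observing that the constants of substitution $\beta_{ij}^{(k)}$ form a finite set of numbers all strictly less than $1$. The volume-to-diameter reduction, the bound $\vol T\le\vol T_i\,(\beta^*)^{dg}$ for depth-$g$ tiles, and the finiteness-plus-strict-monotonicity argument for the Kakutani case are all sound and are exactly what the paper leaves to the reader.
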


\subsection{Counting and uniform distribution}

A key step in the proof of Theorem \ref{Thm: Main result} is the
following Lemma. It implies that in order to prove uniform distribution
of sequences of partitions generated by multiscale substitution schemes,
it is enough to apply a counting argument.

\begin{defn}
	Let $\sigma$ be a multiscale substitution scheme and let $\{\gamma_m\}$ be a sequence of partitions of $T_i\in\tau_\sigma$ generated by $\sigma$. Denote by
	\[ T\in\mathscr{T}_i^{\sigma}(\{\gamma_m\})=\{T'\in\gamma_m\,:\,m\in\N\}
	\]
	the set of all tiles that appear in elements of the sequence $\{\gamma_m\}$. 
\end{defn}

Note that tiles of $\mathscr{T}_i^{\sigma}(\{\gamma_m\})$ are actual subsets of $T_i$, while those of $\mathscr{O}_i^\sigma$ from Definition \ref{def: all substitution tiles} are the substitution tiles copies of which appear in elements of $\{\gamma_m\}$. Clearly, if $\{\gamma_m\}$ is a Kakutani or a generation sequence then there is a one-to-one correspondence between $\mathscr{T}_i^{\sigma}(\{\gamma_m\})$ and  $\mathscr{O}_i^\sigma$. We write  $\mathscr{T}_i^{\sigma}=\mathscr{T}_i^{\sigma}(\{\gamma_m\})$ if the sequence of partitions $\{\gamma_m\}$ it refers to is clear from the context.  
 
\begin{lem}
	\label{Lemma: counting implies uniform distribution}Let $\sigma$ be a multiscale substitution scheme and let $\left\{ \gamma_{m}\right\}$
	be a sequence of partitions of $ T_{i}\in\tau_\sigma$ generated
	by $\sigma$. Assume that for
	every $\varepsilon>0$ there exists $m_{0}\in\mathbb{N}$ so that
	all tiles in $\gamma_{m}$ are of diameter
	less than $\varepsilon$ for any $m\geq m_{0}$. If there exists a marking sequence $\left\{ x_{m}\right\} $
	of $\left\{ \gamma_{m}\right\} $ so that 
	\[
	\lim_{m\rightarrow\infty}\frac{\left|x_{m}\cap T\right|}{\left|x_{m}\right|}=\frac{\vol T}{\vol T_{i}}
	\]
	holds for any tile $T\in\mathscr{T}_i^{\sigma}(\{\gamma_m\})$,  then $\{\gamma_m\}$ is uniformly distributed in $ T_{i}$.
\end{lem}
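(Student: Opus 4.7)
The plan is to carry out the classical approximation argument for uniform distribution: approximate a continuous $f$ by a step function that is constant on the (finitely many) tiles of a sufficiently fine partition $\gamma_{m_0}$ from the sequence, and then apply the counting hypothesis termwise. Since $T_i$ is bounded, $f$ is uniformly continuous on its closure, so given $\varepsilon>0$ we may choose $\delta>0$ such that $|f(x)-f(y)|<\varepsilon$ whenever $|x-y|<\delta$. By the diameter hypothesis, fix $m_0$ so that every tile of $\gamma_m$ has diameter less than $\delta$ for all $m\ge m_0$; for each $T\in\gamma_{m_0}$ pick a reference point $p_T\in T$ and set $c_T=f(p_T)$.

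Next I would deduce two sandwich estimates. For $m\ge m_0$, any point $x\in x_m\cap T$ lies in the tile $T\in\gamma_{m_0}$ whose diameter is less than $\delta$, hence $|f(x)-c_T|<\varepsilon$; summing and dividing by $|x_m|$ yields
\[
\left|\frac{1}{|x_m|}\sum_{x\in x_m}f(x)-\sum_{T\in\gamma_{m_0}}c_T\cdot\frac{|x_m\cap T|}{|x_m|}\right|<\varepsilon.
\]
The analogous pointwise bound $|f(t)-c_T|<\varepsilon$ for $t\in T$, integrated over $T_i$, gives
\[
\left|\frac{1}{\vol T_i}\int_{T_i}f(t)\,dt-\sum_{T\in\gamma_{m_0}}c_T\cdot\frac{\vol T}{\vol T_i}\right|<\varepsilon.
\]

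The final step is to pass to the limit $m\to\infty$. Because $\gamma_{m_0}$ is a finite partition and each of its tiles belongs to $\mathscr{T}_i^\sigma(\{\gamma_m\})$, the counting hypothesis applies termwise and the middle sums coincide in the limit. Combining the three relations gives
\[
\limsup_{m\to\infty}\left|\frac{1}{|x_m|}\sum_{x\in x_m}f(x)-\frac{1}{\vol T_i}\int_{T_i}f(t)\,dt\right|\le 2\varepsilon,
\]
and since $\varepsilon$ is arbitrary the conclusion follows. I do not foresee a genuine obstacle; the only structural point worth noting is that the argument does not require any a priori relation between the sizes of different tiles of $\gamma_m$, only that each individual tile is small at large scale and that the counting assumption can be invoked at the fixed, finite scale of $\gamma_{m_0}$.
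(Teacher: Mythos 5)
Your argument is correct, but it takes a genuinely different route from the paper. The paper works measure-theoretically: it passes to a weak-{*} partial limit $\mu$ of the normalized sampling measures, approximates an arbitrary axis-parallel box from inside and outside by finite unions of tiles of $\mathscr{T}_i^{\sigma}$ of small diameter, and must then separately prove that $\mu(\partial T)=0$ for every tile (via coverings of $\partial T$ by cubes and then by tiles) in order for the sandwich to close. Your proof is the direct Riemann-sum argument: fix one fine partition $\gamma_{m_0}$, replace $f$ by a function constant on its finitely many tiles, and invoke the counting hypothesis termwise only for those tiles. This is more elementary, avoids weak-{*} compactness and the boundary-mass lemma entirely, and makes transparent that the diameter hypothesis is only needed to produce a single sufficiently fine partition. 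The one place your write-up is too quick is the first sandwich estimate: tiles of $\gamma_{m_0}$ have pairwise disjoint \emph{interiors} but may share boundary points, so a marking point lying on such a boundary is counted in $\left|x_{m}\cap T\right|$ for more than one $T$, and the displayed inequality with error exactly $\varepsilon$ need not hold for finite $m$. This is repaired by the hypothesis itself: since $\sum_{T\in\gamma_{m_0}}\frac{\left|x_{m}\cap T\right|}{\left|x_{m}\right|}\rightarrow\sum_{T\in\gamma_{m_0}}\frac{\vol T}{\vol T_{i}}=1$, the total overcount is $o\left(\left|x_{m}\right|\right)$, contributing only an additional $o(1)$ bounded by $\|f\|_{\infty}$ times the excess, so your $\limsup\le2\varepsilon$ conclusion survives. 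With that remark added, the proof is complete.
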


\begin{proof}
	Assume for simplicity $\vol T_{i}=1$. We want to show that
	\[
	\mu_{m}:=\frac{1}{\left|x_{m}\right|}\sum_{x\in x_{m}}\delta_{x}\longrightarrow\vol
	\]
	in the weak-{*} topology, where $\vol$ is the Lebesgue measure in
	$\mathbb{R}^{d}$, restricted to $ T_{i}$. 
	
	Let $\mu$ be a partial limit of $\mu_{m}$. It is enough to show
	that $\mu\left(R\right)=\vol R$ for any axis-parallel box $R\subset T_{i}$
	in order to prove $\mu=\vol$. By assumption, for any $T\in\mathscr{T}^\sigma_{i}$
	\[
	\mu(T)=\vol T.
	\]
	Let $\varepsilon>0$. Since $\mathscr{T}^\sigma_{i}$ contains tiles of arbitrarily
	small diameter, there exist subsets $I_{\varepsilon},C_{\varepsilon}\subset\mathscr{T}^\sigma_{i}$
	such that
	\[
	\bigsqcup_{T\in I_{\varepsilon}}T\subset R\subset\bigsqcup_{T\in C_{\varepsilon}}T
	\]
	and
	\[
	\vol R-\varepsilon<\sum_{T\in I_{\varepsilon}}\vol T\leq\vol R\leq\sum_{T\in C_{\varepsilon}}\vol T<\vol R+\varepsilon.
	\]
	It is enough to prove that for all $T\in\mathscr{T}^\sigma_{i}$
	\begin{equation}
	\mu\left(\partial T\right)=0,\label{eq: measure of boundary is zero}
	\end{equation}
	since then 
	\[
	\vol R-\varepsilon<\sum_{T\in I_{\varepsilon}}\vol T=\sum_{T\in I_{\varepsilon}}\mu(t)=\mu\left(\bigsqcup_{T\in I_{\varepsilon}}T\right)\leq\mu\left(R\right)
	\]
	and 
	\[
	\mu\left(R\right)\leq\mu\left(\bigsqcup_{T\in C_{\varepsilon}}T\right)=\sum_{T\in C_{\varepsilon}}\mu(t)=\sum_{T\in C_{\varepsilon}}\vol T<\vol R+\varepsilon.
	\]
	This holds for any $\varepsilon>0$ and the lemma follows.
	
	We now prove Equation \eqref{eq: measure of boundary is zero} for
	all $T\in\mathscr{T}^\sigma_{i}$. Let $T\in\mathscr{T}^\sigma_{i}$ and let $\eta>0$.
	Since $\vol\partial T=0$, there exists a countable sequence of cubes
	$Q_{n}$ such that
	\[
	\partial T\subset\bigcup Q_{n}
	\]
	and 
	\[
	\sum_{n}\vol Q_{n}<\eta.
	\]
	For any $\varepsilon_{n}>0$ there exists a covering of $Q_{n}$ by
	a countable set of tiles $T_{n}\subset\mathscr{T}^\sigma_{i}$ such that
	\[
	\mu\left(Q_{n}\right)\leq\mu\left(\bigcup_{T\in T_{n}}T\right)\leq\sum_{T\in T_{n}}\mu(t)=\sum_{T\in T_{n}}\vol T\leq\vol Q_{n}+\varepsilon_{n}.
	\]
	Choose $\varepsilon_{n}=\vol Q_{n}$. The boundary $\partial T$ is
	thus covered by a countable union of tiles in $\mathscr{T}^\sigma_{i}$,
	and 
	\[
	\mu\left(\partial T\right)\leq\mu\left(\bigcup_{n}\bigcup_{T\in T_{n}}T\right)\leq\sum_{n}\sum_{T\in T_{n}}\mu(t)\leq\sum_{n}\left(\vol Q_{n}+\varepsilon_{n}\right)=2\sum_{n}\vol Q_{n}<2\eta.
	\]
	This holds for any $\eta>0$, finishing the proof of Equation \eqref{eq: measure of boundary is zero}
	for all $T\in\mathscr{T}^\sigma_{i}$, thus proving the lemma.
\end{proof}
\begin{rem}
	A counterpart of Lemma \ref{Lemma: counting implies uniform distribution}
	appears as Lemma 2.5 in \cite{CarboneVolcic}.
\end{rem}

\section{\label{sec:Graphs}Graphs associated with multiscale substitution	schemes}

A key element in our study of Kakutani sequences of partitions is the directed weighted graph, which we regard as a geometric object, not only combinatorial. As in \cite{Graphs}, denote by $G=\left(\mathcal{V},\mathcal{E},l\right)$ a directed weighted metric multigraph with a set of vertices $\mathcal{V}$ and a set
of weighted edges $\mathcal{E}$, with positive weights which are
regarded as lengths. A \textit{path} in $G$ is a directed walk on
the edges of $G$ that originates and terminates at vertices of $G$.
More generally, a \textit{metric path} in $G$ is a directed walk on edges of
$G$, which does not necessarily originate or terminate at vertices
of $G$. An edge of weight $a$ is equipped with a parameterisation
by the interval $[0,a]$, and the parameterisation is used to define
the path metric $l$ on edges, paths and metric paths in $G$. We
assume throughout that as a subset of the metric graph, an edge contains
its terminal vertex but not its initial one.

\subsection{Graphs associated with multiscale substitution schemes}
\begin{defn}
	\label{def: associated graph}Let $\sigma$
	be a multiscale substitution scheme. The\textit{ graph} \textit{associated} with $\sigma$ is a directed weighted
	graph $G_\sigma$, the vertices of which model the prototiles in $\tau_\sigma$, and the weighted edges model the substitution tiles in $\omega_\sigma$ and their scales. More precisely, $G_\sigma$ has a set of vertices $\mathcal{V}_\sigma=\left\{ 1,\ldots,n\right\}$, where the prototile $T_{i}\in\tau_\sigma$ is associated with 
	the vertex $i\in\mathcal{V}_\sigma$,
	and every substitution tile in $ \omega_\sigma(T_i)$ is associated with a distinct edge in $\mathcal{E}_\sigma$  with initial vertex $i$. In addition, if
	$\varepsilon\in\mathcal{E}_\sigma$ is an edge in $G_\sigma$ associated
	with the substitution tile $\alpha T_{j}\in \omega_\sigma(T_i)$,
	then $\varepsilon$ terminates at vertex $j$
	and is of length 
	\[
	l\left(\varepsilon\right)=\log\frac{1}{\alpha}=\log\frac{1}{\beta}+\frac{1}{d}\left(\log\vol T_{j}-\log\vol T_{i}\right),
	\]
	where $\beta$ is the corresponding constant of substitution, as defined in Definition \ref{def: multicale substitution scheme}.
\end{defn}

Note that $G_\sigma$ does not depend on the substitution rules in $\Sigma_\sigma$, but only on the prototiles $\tau_\sigma$ and the substitution tiles $\omega_\sigma$. A priori, since in the definition of the multiscale substitution scheme we do not assume $\alpha<1$, under the definition above it may be the case that an edge in the associated graph carries a non-positive weight. For example, in the non-normalized scheme
defined in Figure \ref{fig: multiscale substitution scheme on rectangle and square}, the edge associated with $\mathcal{S}\in\omega_\sigma(\mathcal{R})$ 
is of ``length'' $0=\log1$. However, since we assume $0<\beta<1$ for all constants of substitution, all edges in graphs associated with normalized substitution schemes carry positive weights, and these are the graphs involved in the proof of the main results of this paper. 
\begin{example}
	The graph associated with the $\alpha$-Kakutani scheme is illustrated
	in Figure \ref{fig: Graph associated with kakutani}. It consists
	of a single vertex corresponding to the single prototile $\mathcal{I}$,
	and two loops of lengths $\log\frac{1}{\alpha}$ and $\log\frac{1}{1-\alpha}$.
	\begin{figure}[H]
		\includegraphics[scale=1.2
		]{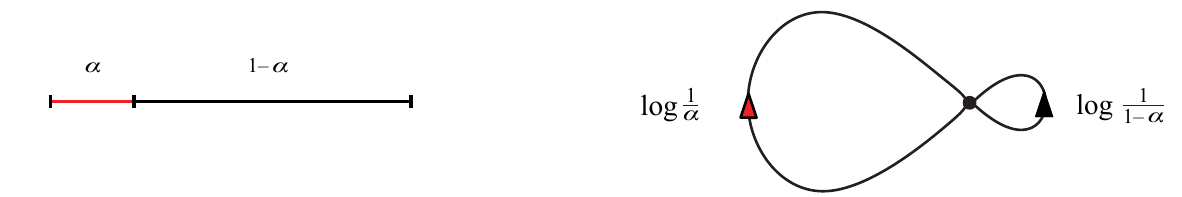}\caption{\label{fig: Graph associated with kakutani}The graph associated with
			the $\alpha$-Kakutani scheme on $\mathcal{I}$.}
	\end{figure}
	
\end{example}

\begin{example}
	\label{example: constants ang graph of the rectangle and square multiscale substitution scheme}The
	graph associated with a normalized multiscale substitution scheme,
	equivalent to the scheme represented in Figure \ref{fig: RS scheme},
	is illustrated in Figure \ref{fig: Graph associated with MSS of rectangle and square}.
	The left vertex is associated with the rectangle $\mathcal{R}$ and
	every one of its outgoing edges is associated with a distinct tile
	of matching color in the substitution rule on $\mathcal{R}$, where
	multiple arrow heads represent multiple distinct edges of the same
	length, origin, termination and direction. Similarly, the right vertex
	and its outgoing edges are associated with the square $\mathcal{S}$
	and the tiles which appear in its substitution rule. 
	\begin{figure}[H]
		\includegraphics[scale=1.2]{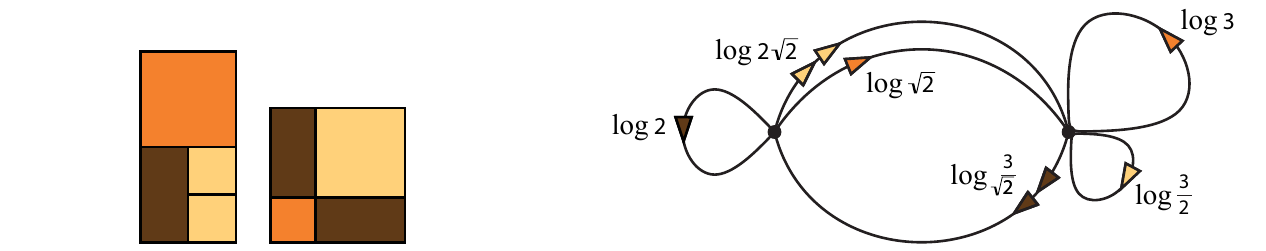}\caption{\label{fig: Graph associated with MSS of rectangle and square}The graph associated with a normalized scheme on $\mathcal{R}$ and $\mathcal{S}$. }
	\end{figure}
	
\end{example}

The next lemma describes the relation between associated graphs of
equivalent schemes, and follows from the definition of the associated
graph.
\begin{lem}
	\label{lem: Graphs and equivlence}Let $\sigma_1$ and $\sigma_2$ be two equivalent multiscale substitution schemes with $\tau_{\sigma_1}=\left( T_{1},\ldots, T_{n}\right)$,
	and $\tau_{\sigma_2}=\left( T_{1},\ldots,\alpha T_{j},\ldots, T_{n}\right)$,
	with $\alpha>0$, and let $G_{\sigma_1}$ and $G_{\sigma_2}$ be the associated graphs. 
	\begin{enumerate}
		\item The graphs $G_{\sigma_1}$ and $G_{\sigma_2}$ have the same sets of vertices and
		edges.
		\item The weights of the edges are not changed, except for edges $\varepsilon$
		with terminal vertex $j$, for which 
		\[
		l_{G_{\sigma_2}}\left(\varepsilon\right)=l_{G_{\sigma_1}}\left(\varepsilon\right)+\log\alpha,
		\]
		and edges $\varepsilon$ with initial vertex $j$, for which 
		\[
		l_{G_{\sigma_2}}\left(\varepsilon\right)=l_{G_{\sigma_1}}\left(\varepsilon\right)-\log\alpha.
		\]
	\end{enumerate}
\end{lem}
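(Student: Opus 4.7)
The plan is to read both parts directly off the definition of the associated graph (Definition \ref{def: associated graph}), using Lemma \ref{lem: equivalent schemes have the same constants}, which guarantees that the constants of substitution are preserved under equivalence.

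For part (1), I would argue that the vertex set $\mathcal{V}_\sigma = \{1, \ldots, n\}$ and the edge set of $G_\sigma$ are purely combinatorial data: they depend only on the labeled list $\tau_\sigma$ of prototiles and, for each $i$, on the list of substitution tiles $\omega_\sigma(T_i)$ together with their labels. Equivalence rescales the geometric tiles but preserves the labeling and the structure of these lists, so the vertex and edge sets of $G_{\sigma_1}$ and $G_{\sigma_2}$ coincide as abstract directed multigraphs, with a canonical identification of edges.

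For part (2), the key is the length formula recorded in Definition \ref{def: associated graph}: an edge $\varepsilon$ with initial vertex $i$ and terminal vertex $j$ has weight
\[
l(\varepsilon) = \log\frac{1}{\beta_{ij}^{(k)}} + \frac{1}{d}\bigl(\log\vol T_j - \log\vol T_i\bigr).
\]
By Lemma \ref{lem: equivalent schemes have the same constants}, $\beta_{ij}^{(k)}$ is unchanged when passing from $\sigma_1$ to $\sigma_2$, so the entire difference $l_{G_{\sigma_2}}(\varepsilon) - l_{G_{\sigma_1}}(\varepsilon)$ comes from the volume terms. Under the rescaling $T_j \mapsto \alpha T_j$, we have $\log\vol T_j \mapsto \log\vol T_j + d\log\alpha$, while every other $\log\vol T_i$ is unchanged. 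Plugging this into the formula immediately yields: an edge terminating at $j$ picks up $+\log\alpha$ from the $\frac{1}{d}\log\vol T_j$ term, and an edge originating at $j$ picks up $-\log\alpha$ from the $-\frac{1}{d}\log\vol T_i$ term.

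No deep difficulty arises; this is a routine bookkeeping calculation from the definition. The only point that requires a moment of care, and presumably the reason the lemma states the two corrections separately, is the handling of loops at $j$: such an edge is simultaneously covered by both clauses, so both corrections must be applied additively, producing a total shift of zero. This is consistent with the invariant edge-length formula, since for a loop at $j$ the term $\log\vol T_j - \log\vol T_j$ is identically zero and the length depends only on the equivalence-invariant constant $\beta_{jj}^{(k)}$.
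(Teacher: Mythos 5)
Your proof is correct and follows the same route the paper intends: the paper gives no separate argument for this lemma, stating only that it ``follows from the definition of the associated graph,'' and your computation from the edge-length formula together with the invariance of the $\beta_{ij}^{(k)}$ (Lemma \ref{lem: equivalent schemes have the same constants}) is exactly that routine verification. Your remark that a loop at $j$ receives both corrections and hence a net shift of zero is also the right reading of the statement.
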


In view of Lemma \ref{lem: Graphs and equivlence}, in order to define the graph $G_{\sigma_2}$, one simply ``slides''
the vertex $j$ in $G_{\sigma_1}$ along the edges, in a way that does not change the length of any closed path in the associated graph. See also Example \ref{example: rhombus and triangle} and Figures \ref{fig:TR fixed scaled},
\ref{fig: TR normalized} and \ref{fig:TR rationalized} within.

\subsection{Paths in $G_\sigma$ and tiles in partitions}

Let $\sigma$
be a multiscale substitution scheme and let $G_\sigma$ be the associated graph. There is a natural
one-to-one correspondence between tiles of type $j$ in $\mathscr{O}^\sigma_{i}$
and finite paths in $G_\sigma$ with initial vertex $i\in\mathcal{V}_\sigma$ and
terminal vertex $j\in\mathcal{V}_\sigma$, where a tile of generation $m$
corresponds to a path consisting of $m$ edges. Denote by $\gamma_{T}$
the unique path in $G_\sigma$ that corresponds to the tile $T\in\mathscr{O}^\sigma_{i}$.

A partition of $ T_{i}$ that is an element either of a
Kakutani or of a generation sequence of partitions generated by the
scheme, corresponds to a finite collection of paths with initial vertex
$i\in\mathcal{V}_\sigma$. Every infinite path in $G_\sigma$ with initial vertex
$i$ is the continuation of a unique finite path in this finite collection. 
\begin{lem}
	\label{lemma: length of path and volume of tile}Let $T\in\mathscr{O}^\sigma_{i}$
	be a tile of type $j$, and assume $T$ corresponds to the path $\gamma_{T}$
	in $G_\sigma$. Then 
	\[
	\vol T=e^{-l\left(\gamma_{T}\right)d}\cdot\vol T_{j}.
	\]
\end{lem}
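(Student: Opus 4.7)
The plan is to proceed by induction on the length $m$ of the path $\gamma_T$, which coincides with the generation of the tile $T$. The key observation is that by the extension of substitution rules to isometric and rescaled copies of prototiles (the displayed formulas for $\varrho_\sigma(\alpha\cdot\varphi(T_i))$ and $\omega_\sigma(\alpha\cdot\varphi(T_i))$ preceding Definition \ref{def: all substitution tiles}), scaling factors multiply when substitutions are iterated. Concretely, I will first show that if $\gamma_T=(\varepsilon_1,\ldots,\varepsilon_m)$ is the path in $G_\sigma$ corresponding to $T$, with $\varepsilon_s$ the edge associated to a substitution tile $\alpha_s T_{v_s}\in\omega_\sigma(T_{v_{s-1}})$ (where $v_0=i$ and $v_m=j$), then $T$ is isometric to $(\alpha_1\alpha_2\cdots\alpha_m)\,T_j$.

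The base case $m=0$ is the trivial path at vertex $i$, corresponding to $T=T_i$ itself, so $T$ is isometric to the empty product times $T_j=T_i$. For the inductive step, let $T'\in\mathscr{O}_i^\sigma$ correspond to the initial subpath $(\varepsilon_1,\ldots,\varepsilon_{m-1})$ ending at vertex $v_{m-1}=k$; by induction $T'$ is isometric to $(\alpha_1\cdots\alpha_{m-1})\,T_k$, say $T'=\mu\cdot\varphi(T_k)$ with $\mu=\alpha_1\cdots\alpha_{m-1}$. The tile $T$ then arises from substituting $T'$ according to some $\varrho_\sigma\in\Sigma_\sigma$, and by the scaling property of the extended substitution we have $\omega_\sigma(T')=\bigl(\mu\,T''\,:\,T''\in\omega_\sigma(T_k)\bigr)$. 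The specific subtile selected at this step corresponds to $\varepsilon_m$, namely $\alpha_m T_j\in\omega_\sigma(T_k)$, so $T$ is isometric to $\mu\cdot\alpha_m T_j=(\alpha_1\cdots\alpha_m)T_j$, completing the induction. Taking $d$-dimensional volumes gives
\[
\vol T = \Bigl(\prod_{s=1}^m \alpha_s\Bigr)^{\!d}\vol T_j.
\]

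Second, I translate the product $\prod_s \alpha_s$ into the path length using Definition \ref{def: associated graph}, which assigns to the edge $\varepsilon_s$ the weight $l(\varepsilon_s)=\log(1/\alpha_s)$. Summing along $\gamma_T$ yields
\[
l(\gamma_T)=\sum_{s=1}^m l(\varepsilon_s) = -\log\prod_{s=1}^m \alpha_s,
\]
so $\prod_{s=1}^m \alpha_s = e^{-l(\gamma_T)}$. Substituting this into the volume formula above gives $\vol T = e^{-l(\gamma_T)\,d}\vol T_j$, as required.

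No serious obstacle is expected: the proof is essentially a bookkeeping argument that unwinds the recursive definitions of $\omega_\sigma^k$ and of the edge weights in $G_\sigma$. The only mildly subtle point is that the tile $T$ lives as a set in $T_i$ while the path $\gamma_T$ records how to reach its prototile $T_j$ up to isometry and scaling; the carefully stated scale-preserving extension of $\omega_\sigma$ and $\varrho_\sigma$ to isometric copies is precisely what makes the inductive step work, and no further input is needed.
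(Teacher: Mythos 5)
Your proof is correct and is simply a careful unwinding of the definition of $G_\sigma$, which is exactly what the paper does (its proof is the one-line remark that the statement ``follows directly from the definition of $G_\sigma$''). The induction on the path length, the multiplicativity of the scaling factors, and the identity $l(\gamma_T)=-\log\prod_s\alpha_s$ are precisely the bookkeeping the paper leaves implicit.
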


\begin{proof}
	This follows directly from the definition of $G_\sigma$. 
\end{proof}
\begin{cor}
	\label{cor: Graphs-associated-with equivalent schemes have the same closed paths}Graphs
	associated with equivalent multiscale substitution schemes have the
	same set of lengths of closed paths.
\end{cor}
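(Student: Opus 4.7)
The plan is to reduce the statement to Lemma \ref{lem: Graphs and equivlence} via a telescoping argument on closed paths. Given two equivalent schemes $\sigma_1$ and $\sigma_2$, any equivalence can be obtained by a sequence of single-prototile rescalings $T_j \mapsto \lambda_j T_j$, so it suffices to track how the length of an arbitrary closed path transforms under one such rescaling and then iterate (or, equivalently, apply the simultaneous version of Lemma \ref{lem: Graphs and equivlence} directly).

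First, I would record that by Lemma \ref{lem: Graphs and equivlence} the graphs $G_{\sigma_1}$ and $G_{\sigma_2}$ have the same underlying combinatorial data, namely the same vertex set and the same set of directed edges; only the weights can change. Consequently the set of closed paths, viewed as finite sequences of edges, is literally the same in both graphs. The content of the corollary is therefore that each such closed path has the same $l$-length when computed in $G_{\sigma_1}$ and in $G_{\sigma_2}$.

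The key step is the cancellation observation. Suppose first that $\sigma_2$ differs from $\sigma_1$ only by rescaling the $j$-th prototile by $\alpha$. For an edge $\varepsilon$, Lemma \ref{lem: Graphs and equivlence} gives $l_{G_{\sigma_2}}(\varepsilon) - l_{G_{\sigma_1}}(\varepsilon)$ equal to $+\log\alpha$ if $\varepsilon$ terminates at $j$, to $-\log\alpha$ if $\varepsilon$ originates at $j$, and to $0$ otherwise. For a closed path $\gamma = (\varepsilon_1,\ldots,\varepsilon_m)$, every visit to vertex $j$ contributes one edge terminating at $j$ and one edge originating at $j$, so the number of edges in $\gamma$ terminating at $j$ equals the number originating at $j$. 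Summing the local weight changes along $\gamma$ therefore yields $l_{G_{\sigma_2}}(\gamma) - l_{G_{\sigma_1}}(\gamma) = 0$. Iterating over all prototiles rescaled in passing from $\sigma_1$ to $\sigma_2$ (the argument works simultaneously because the contributions are additive) gives the equality of lengths of every closed path, hence equality of the sets of such lengths.

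I do not expect a real obstacle here: the only subtlety is checking the in-degree/out-degree bookkeeping along a closed walk, and phrasing the statement so that the same finite edge sequence is interpreted as a closed path in both graphs. One may also double-check consistency with Lemma \ref{lemma: length of path and volume of tile} and Lemma \ref{lem: equivalent schemes have the same constants}: a closed path at vertex $i$ corresponds to a tile of type $i$ in $\mathscr{O}^\sigma_i$, and by Lemma \ref{lemma: length of path and volume of tile} its length equals $\tfrac{1}{d}\log(\vol T_i/\vol T)$, a ratio determined entirely by the constants $\beta_{ij}^{(k)}$, which by Lemma \ref{lem: equivalent schemes have the same constants} are invariant under equivalence — this provides an independent sanity check of the conclusion.
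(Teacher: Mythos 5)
Your argument is correct and amounts to the same telescoping cancellation the paper uses: the paper's proof writes each edge length as $\log\frac{1}{\beta}+\frac{1}{d}\left(\log\vol T_{j}-\log\vol T_{i}\right)$, so the volume terms cancel around any closed path and the length is determined by the constants of substitution alone, which are equivalence invariants by Lemma \ref{lem: equivalent schemes have the same constants}. Your version --- tracking the $\pm\log\alpha$ weight changes from Lemma \ref{lem: Graphs and equivlence} and noting that arrivals and departures at each vertex balance along a closed walk --- is precisely the ``difference'' form of that same computation, so the proof is sound and essentially identical to the paper's.
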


\begin{proof}
	If $\gamma$ is a closed path in $G_\sigma$, that is if $i=j$, then $l\left(\gamma\right)$
	does not depend on the volumes of the tiles in $\tau_\sigma$, but
	only on the constants of substitution $\beta_{ij}^{(k)}$. The corollary now follows from Lemma \ref{lem: equivalent schemes have the same constants}
\end{proof}
\begin{cor}
	\label{cor: monotonicity of map between paths lengths and tile volumes}Let
	$G_\sigma$ be the graph associated with a normalized multiscale substitution
	scheme $\sigma$. Let $T_{1},T_{2}\in\mathscr{O}^\sigma_{i}$ and
	let $\gamma_{T_{1}}$ and $\gamma_{T_{2}}$ be the corresponding paths
	in $G_\sigma$. Then 
	\[
	\vol T_{1}<\vol T_{2}\,\,\,\,\,\text{{\rm \emph{if\,and\,only\,if}}}\,\,\,\,\,l\left(\gamma_{T_{1}}\right)>l\left(\gamma_{T_{2}}\right).
	\]
\end{cor}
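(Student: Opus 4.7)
The plan is to reduce this corollary immediately to Lemma \ref{lemma: length of path and volume of tile} together with the normalization hypothesis. First, let $T_1$ be of type $j_1$ and $T_2$ of type $j_2$, where $j_1, j_2$ need not agree. Lemma \ref{lemma: length of path and volume of tile} gives
\[
\vol T_1 = e^{-l(\gamma_{T_1})d}\cdot\vol T_{j_1}, \qquad \vol T_2 = e^{-l(\gamma_{T_2})d}\cdot\vol T_{j_2}.
\]

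Next, I would invoke the assumption that $\sigma$ is normalized, which by definition means $\vol T_{j_1} = \vol T_{j_2} = 1$. This is the key point that makes the equivalence work: without normalization, the prefactors $\vol T_{j_1}, \vol T_{j_2}$ could differ and one could easily construct counterexamples in which a longer path corresponds to a larger tile (because the terminal prototile is much bigger). Under normalization the formulas simplify to $\vol T_\ell = e^{-l(\gamma_{T_\ell})d}$ for $\ell = 1,2$.

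Finally, since $d > 0$, the map $x \mapsto e^{-xd}$ is strictly monotonically decreasing on $\R$. Applying this monotonicity yields
\[
\vol T_1 < \vol T_2 \iff e^{-l(\gamma_{T_1})d} < e^{-l(\gamma_{T_2})d} \iff l(\gamma_{T_1}) > l(\gamma_{T_2}),
\]
which is the claim. There is no substantive obstacle here; the only point to be careful about is to flag that the normalization is used in an essential way (the corresponding statement fails without it) and that the argument works regardless of whether $T_1$ and $T_2$ share a common type.
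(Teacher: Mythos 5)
Your proof is correct and is exactly the argument the paper has in mind: the paper's own proof is the one-line remark that the corollary is clear from Lemma \ref{lemma: length of path and volume of tile}, and you have simply spelled out the details, correctly identifying that normalization (all prototile volumes equal to $1$) is what collapses the formula to $\vol T_\ell = e^{-l(\gamma_{T_\ell})d}$ and makes the equivalence hold across different types. Nothing further is needed.
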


\begin{proof}
	This is clear from Lemma \ref{lemma: length of path and volume of tile}.
\end{proof}
It follows that the set of lengths of paths in a graph associated
with a normalized scheme may be used as indices of a Kakutani sequences
of partitions, in the following sense: 
\begin{lem}
	\label{lem: tiles =00003D paths} Let $\sigma$
	be a multiscale substitution scheme, let $G_\sigma$ be the graph associated with an equivalent normalized scheme, and let $\{\pi_m\}$ be a Kakutani sequence of partitions of $T_i\in\tau_\sigma$. Let $\left\{ l_{m}\right\} $ be the increasing sequence of lengths of paths in $G_\sigma$ that originate at $i\in\mathcal{V}_\sigma$. Let $\varepsilon\in\mathcal{E}_\sigma$ be an edge associated with an element $\alpha T_{j}\in\omega_\sigma(T_h)$ for some $T_j,T_h\in\tau_\sigma$. There is a one-to-one correspondence between tiles of type $j$ in the partition $\pi_{m}$ that appear as a result of a substitution of a tile of type $h$ and are associated with the substitution tile $\alpha T_{j}\in\omega_\sigma(T_h)$,
	and metric paths of length $l_{m}$ that originate at $i\in\mathcal{V}_\sigma$ and terminate at a point on the edge $\varepsilon$. 
\end{lem}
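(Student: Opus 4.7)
The plan is to characterize the elements of $\pi_m$ explicitly as paths in $G_\sigma$, from which the correspondence in the lemma follows transparently. Recall that every tile $T\in\mathscr{O}_i^\sigma$ is associated with a unique finite path $\gamma_T$ originating at $i$, that volumes and path lengths are inversely related by Corollary~\ref{cor: monotonicity of map between paths lengths and tile volumes} in the normalized scheme, and that the Kakutani procedure substitutes tiles of maximal volume, hence of minimal path length. Writing $\gamma^-$ for the path obtained from $\gamma$ by removing its final edge (with $l(\gamma^-)=0$ if $\gamma$ consists of a single edge), the central claim is that a nontrivial path $\gamma$ corresponds to a tile of $\pi_m$ if and only if
\[
l(\gamma^-) < l_m \leq l(\gamma).
\]

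I would prove this claim by induction on $m\geq 1$. The base case $m=1$ is direct: $\pi_1$ consists of the single-edge paths out of $i$, and $l_1$ is the smallest outgoing edge length from $i$. For the inductive step, first show that the minimum path length realized in $\pi_{m-1}$ is exactly $l_{m-1}$: by the inductive hypothesis every $\gamma$ in $\pi_{m-1}$ satisfies $l(\gamma)\geq l_{m-1}$, and conversely any path $\gamma_0$ with $l(\gamma_0)=l_{m-1}$ has $l(\gamma_0^-)<l_{m-1}$ because edges carry positive weight, so $\gamma_0$ lies in the frontier. The Kakutani rule then substitutes precisely the paths of length $l_{m-1}$, leaves longer ones unchanged, and introduces children whose predecessor has length $l_{m-1}$. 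Since $\{l_k\}$ enumerates all distinct path lengths originating at $i$, the condition $l(\gamma)>l_{m-1}$ is equivalent to $l(\gamma)\geq l_m$, and a short case split on whether $l(\gamma^-)$ is strictly less than or equal to $l_{m-1}$ gives the frontier characterization of $\pi_m$.

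Given the characterization, the bijection is immediate. Fix the edge $\varepsilon$ with initial vertex $h$ and terminal vertex $j$. A tile of type $j$ in $\pi_m$ produced by substituting a parent of type $h$ via the substitution tile associated with $\varepsilon$ corresponds to a path $\gamma$ whose last edge is $\varepsilon$ and satisfies $l(\gamma^-)<l_m\leq l(\gamma)$. Send such a $\gamma$ to the metric path formed by traversing $\gamma^-$ completely and then traversing $\varepsilon$ for an additional length $l_m-l(\gamma^-)$, which by the claim lies in $(0,l(\varepsilon)]$; this metric path has total length $l_m$ and terminates at a point on $\varepsilon$, the convention that an edge contains its terminal but not its initial vertex resolving the boundary case $l_m-l(\gamma^-)=l(\varepsilon)$. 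The inverse map takes a metric path of length $l_m$ terminating at $t\in(0,l(\varepsilon)]$ on $\varepsilon$ and completes the traversal of $\varepsilon$; the resulting path $\gamma$ satisfies $l(\gamma^-)=l_m-t<l_m$ and $l(\gamma)=l(\gamma^-)+l(\varepsilon)\geq l_m$, and so corresponds to a tile of $\pi_m$.

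The most delicate point is the inductive step, specifically verifying that the Kakutani procedure advances through $\{l_k\}$ without gaps — this hinges on the positivity of edge weights, which guarantees that whenever $l_{m-1}$ is realized as a path length, at least one path of that length sits in the current frontier. The endpoint convention on edge ownership of vertices is the other small but essential ingredient, avoiding ambiguity when a metric path lands exactly at the terminal vertex of $\varepsilon$. A minor auxiliary observation is that choices of substitution rule $\varrho_\sigma\in\Sigma_\sigma$ affect only the arrangement of the fixed multiset $\omega_\sigma(T_h)$ inside $T_h$, not the edge structure of $G_\sigma$, so the entire argument is insensitive to the non-constant configuration case.
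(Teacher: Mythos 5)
Your proposal is correct and follows essentially the same route as the paper: the bijection is given by truncating $\gamma_T$ at length $l_m$ (respectively, extending a metric path to the end of its final edge), with the positivity of edge weights and the convention that an edge owns its terminal but not its initial vertex handling the boundary cases. Your inductive frontier characterization $l(\gamma^-)<l_m\leq l(\gamma)$ is simply a more explicit justification of the facts the paper asserts directly, namely that each tile $T$ first appears in some $\pi_{m_0}$ and is of maximal volume in some $\pi_{m_1}$ with $m_0\leq m\leq m_1$.
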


\begin{proof}
	Let $T\in\pi_{m}$ be a tile associated with the substitution tile $\alpha T_{j}\in\omega_\sigma(T_h)$ as in the lemma. Then $T$ corresponds to a path $\gamma_{T}$ with initial vertex $i$, terminal vertex $j$ and final edge $\varepsilon$. If
	$T$ is of maximal volume in $\pi_{m}$, then $l\left(\gamma_{T}\right)=l_{m}$
	and $\gamma_{T}$ is also the metric path corresponding to $T$ as
	an element of $\pi_{m}$. Otherwise $l\left(\gamma_{T}\right)>l_{m}$,
	and the unique metric path of length exactly $l_{m}$ that is defined
	by truncating $\gamma_{T}$ terminates at a point on $\varepsilon$,
	and is the metric path corresponding to $T$ as an element of $\pi_{m}$. 
	
	Consider a metric path with initial vertex $i$ and length $l_{m}$
	that terminates at a point on $\varepsilon$, then it can be uniquely
	extended to a path $\gamma$ in $G_\sigma$ with final edge $\varepsilon$.
	Let $T\in\mathscr{T}^\sigma_{i}$ be the tile corresponding to $\gamma$.
	Then there exist $m_{0}\leq m\leq m_{1}$ such that $T$ first appears
	in partition $\pi_{m_{0}}$ and is of maximal volume among tiles in
	partition $\pi_{m_{1}}$. It follows that $T\in\pi_{m}$, and $T$
	is the tile corresponding to the metric path at hand.
\end{proof}

\begin{example}
	Consider the $\frac{1}{3}$-Kakutani multiscale substitution scheme and its associated graph, which consists of a single vertex and two loops, as illustrated in \ref{fig: Graph associated with kakutani}. The two loops are of lengths $\log 3$ and $\log \frac{3}{2}$, and so all paths are of lengths $a\log \frac{3}{2}+b\log 3$ for integers $a,b\ge0$. The first few elements of the sequence $\{l_m\}$ are therefore $l_0=0,l_1=\log \frac{3}{2}, l_2=2\log \frac{3}{2}$ and$l_3=\log 3$. Figure \ref{fig: Kakutani metric paths} illustrates all metric paths of lengths $l_0,l_1$ and $l_2$ together with their corresponding intervals in $\pi_0, \pi_1$ and $\pi_2$. As can be clearly seen form the illustration, the interval of maximal length  in the next partition $\pi_3$ would be the interval $[0,\frac{1}{3}]$, which corresponds to the loop of length $l_3=\log 3$. 
	
	\begin{figure}[H]
		\includegraphics[scale=1.5]{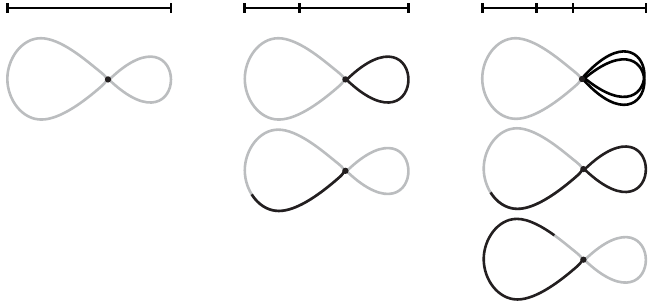}\caption{\label{fig: Kakutani metric paths}Elements of the $\frac{1}{3}$-Kakutani sequence and the corresponding metric paths.}
	\end{figure}

\end{example}

\subsection{The graph matrix function of graphs associated with multiscale substitutions}

\begin{defn}
	Let $G$ be a directed weighted graph with a set of vertices $\mathcal{V}=\left\{ 1,\ldots,n\right\} $.
	Let $i,j\in\mathcal{V}$ be a pair of vertices in $G$, and assume
	that there are $k_{ij}\geq0$ edges $\varepsilon_{1},\ldots,\varepsilon_{k_{ij}}$
	with initial vertex $i$ and terminal vertex $j$. The \textit{graph
		matrix function} of $G$ is the matrix valued function $M:\mathbb{C}\rightarrow M_{n}\left(\mathbb{C}\right)$
	defined by 
	\[
	M_{ij}(s)=e^{-s\cdot l\left(\varepsilon_{1}\right)}+\cdots+e^{-s\cdot l\left(\varepsilon_{k_{ij}}\right)}.
	\]
	If $i$ is not connected to $j$ by an edge, put $M_{ij}(s)=0$. Note that the restriction of $M$ to $\mathbb{R}$ is real valued.
\end{defn}

\begin{lem}
	\label{Lemma: M(d)  has eigenvalue 1 and eigenvector vol}Let $\sigma$
	be a multiscale substitution scheme in $\R^d$ and let $G_\sigma$ be the associated graph. Then $M(d)$ is a non-negative real
	valued matrix with a positive right eigenvector
	\[
	v_{\vol}=\left(\vol T_{1},\ldots,\vol T_{n}\right)^{T}\in\mathbb{R}^{n}
	\]
	and associated eigenvalue $\mu=1$.
\end{lem}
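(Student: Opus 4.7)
The plan is a direct verification of the eigenvalue equation $M(d)v_{\vol} = v_{\vol}$, built around the observation that the normalization of edge lengths in $G_\sigma$ is precisely engineered so that the entries of $M(d)$ encode the volumes of the substitution tiles.

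First I would unpack the definition of the edge weights. By Definition \ref{def: associated graph}, every edge $\varepsilon\in\mathcal{E}_\sigma$ from $i$ to $j$ corresponds to exactly one substitution tile $\alpha_{ij}^{(k)} T_j\in\omega_\sigma(T_i)$, and its length satisfies
\[
l(\varepsilon) = \log\frac{1}{\alpha_{ij}^{(k)}},
\]
so that $e^{-d\cdot l(\varepsilon)} = (\alpha_{ij}^{(k)})^d$. Summing over the $k_{ij}$ parallel edges from $i$ to $j$ therefore gives
\[
M_{ij}(d) = \sum_{k=1}^{k_{ij}} (\alpha_{ij}^{(k)})^d,
\]
with the convention that the sum is $0$ when $k_{ij}=0$. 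In particular $M(d)$ is a non-negative real matrix.

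Next I would plug $v_{\vol}$ into $M(d)$ and use Lemma \ref{lem: constants and volumes}(1), or equivalently the definition of $\beta_{ij}^{(k)}$, to convert scaled volumes. For each $i$,
\[
[M(d) v_{\vol}]_i = \sum_{j=1}^{n} M_{ij}(d)\,\vol T_j = \sum_{j=1}^{n}\sum_{k=1}^{k_{ij}} (\alpha_{ij}^{(k)})^d \vol T_j = \sum_{j=1}^{n}\sum_{k=1}^{k_{ij}} \vol\bigl(\alpha_{ij}^{(k)} T_j\bigr).
\]
By Definition \ref{def: multicale substitution scheme} the list $\omega_\sigma(T_i)$ tiles $T_i$, so the right-hand side is exactly $\vol T_i$. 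This yields $M(d) v_{\vol} = v_{\vol}$, i.e.\ $\mu=1$ is an eigenvalue with right eigenvector $v_{\vol}$, and positivity of $v_{\vol}$ is immediate since prototiles have positive volume by definition of a tile.

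There is no real obstacle here; the lemma is essentially a bookkeeping check. The only point that requires care is making sure the identification $e^{-d\cdot l(\varepsilon)} = (\alpha_{ij}^{(k)})^d$ is read correctly from the two equivalent forms of $l(\varepsilon)$ given in Definition \ref{def: associated graph}, after which the tiling axiom on $\omega_\sigma(T_i)$ does all the work.
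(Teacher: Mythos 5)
Your proof is correct and follows essentially the same route as the paper: a direct computation of $[M(d)v_{\vol}]_i$ showing it equals $\vol T_i$. The only cosmetic difference is that you invoke the tiling axiom on $\omega_\sigma(T_i)$ directly via $\vol(\alpha_{ij}^{(k)}T_j)=(\alpha_{ij}^{(k)})^d\vol T_j$, whereas the paper first rewrites $(\alpha_{ij}^{(k)})^d$ in terms of $\beta_{ij}^{(k)}$ and then applies Lemma \ref{lem: constants and volumes}(2), which itself encodes the same tiling fact.
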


\begin{proof}
	The non-zero entries of the graph matrix function of $G_\sigma$ are given
	by 
	\[
	M_{ij}(s)=\sum_{k=1}^{k_{ij}}\left(\alpha_{ij}^{(k)}\right)^{s}.
	\]
	By Lemma \ref{lem: constants and volumes} we have 
	\[
	\begin{aligned}\left(M(d)\cdot v_{\text{vol}}\right)_{i} & =\sum_{j=1}^{n}M_{ij}(d)\vol T_{j}\\
	& =\sum_{j=1}^{n}\sum_{k=1}^{k_{ij}}\frac{\vol T_{i}}{\vol T_{j}}\left(\beta_{ij}^{(k)}\right)^{d}\vol T_{j}\\
	& =\vol T_{i},
	\end{aligned}
	\]
	and so $\text{\ensuremath{M(d)\cdot}}v_{\text{vol}}=1\cdot v_{\text{vol}}$.
\end{proof}

\subsection{Irreducibility of multiscale substitution schemes}
\begin{defn}
	A graph $G$ is called \textit{strongly connected} if for every pair
	of vertices $i,j\in\mathcal{V}$ there exists a path in $G$ with
	initial vertex $i$ and terminal vertex $j$. A non-negative real
	valued matrix $A\in M_{n}(\mathbb{R})$, that is, a matrix
	with non-negative entries, is called \textit{irreducible} if for every
	pair of indices $i,j$ there exists $k\in\mathbb{N}$ for which $\left(A^{k}\right)_{ij}>0$.
\end{defn}

Recall the definition of irreducible multiscale substitution schemes
given in Definition \ref{def:Irresucible multiscale substitution}.
\begin{lem}
	A multiscale substitution scheme is irreducible if and only if the
	associated graph $G_\sigma$ is strongly connected if and only if $M(d)$
	is an irreducible matrix.
\end{lem}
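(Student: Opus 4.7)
The plan is to establish the two equivalences separately. Writing (A) $=$ irreducibility of $\sigma$, (B) $=$ strong connectivity of $G_\sigma$, and (C) $=$ irreducibility of the matrix $M(d)$, I would prove (A) $\Leftrightarrow$ (B) and (B) $\Leftrightarrow$ (C). Neither direction requires more than bookkeeping with the definitions and with the path/tile correspondence already set up.

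For (A) $\Leftrightarrow$ (B), I would invoke the one-to-one correspondence described just before the lemma between tiles of type $j$ in $\mathscr{O}_i^\sigma$ and finite paths in $G_\sigma$ from vertex $i$ to vertex $j$, where a tile obtained after $k$ applications of substitution rules corresponds to a path with $k$ edges. Under this correspondence, the existence of at least one tile of type $j$ in $\mathscr{O}_i^\sigma$ for a given pair $(i,j)$ is equivalent to the existence of at least one path in $G_\sigma$ from $i$ to $j$. Ranging over all ordered pairs $(i,j)$ then gives exactly the equivalence of (A) and (B).

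For (B) $\Leftrightarrow$ (C), the key observation is that the entries of $M(d)$ are non-negative and satisfy $M_{ij}(d)=\sum_{k=1}^{k_{ij}}(\alpha_{ij}^{(k)})^{d}>0$ if and only if $k_{ij}\ge 1$, that is, if and only if there is an edge in $G_\sigma$ from $i$ to $j$. By induction on $k$,
\[
(M(d)^{k})_{ij}=\sum_{i=i_{0},i_{1},\dots,i_{k}=j}M(d)_{i_{0}i_{1}}M(d)_{i_{1}i_{2}}\cdots M(d)_{i_{k-1}i_{k}},
\]
a sum of non-negative terms. Since all scaling constants are strictly positive, a summand is positive exactly when every factor is positive, which in turn happens exactly when the sequence $i_{0},i_{1},\dots,i_{k}$ is the vertex trace of a walk of length $k$ from $i$ to $j$ in $G_\sigma$. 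Hence $(M(d)^{k})_{ij}>0$ if and only if some such walk exists; taking the union over $k\in\mathbb{N}$ and noting that in a finite graph the existence of a walk between two vertices is equivalent to the existence of a path, we see that irreducibility of $M(d)$ coincides with strong connectivity of $G_\sigma$.

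I do not foresee a substantial obstacle here: both halves are standard translations between combinatorial, geometric, and linear-algebraic incarnations of the same reachability condition, and the positivity of the scaling constants $\alpha_{ij}^{(k)}$ ensures that no cancellations obscure the correspondence between positive matrix entries and existing walks. The only minor subtlety to keep in mind is that one must invoke the already-noted bijection between $\mathscr{O}_i^\sigma$ and paths in $G_\sigma$ to avoid re-proving it from scratch.
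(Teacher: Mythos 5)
Your proposal is correct and is essentially the argument the paper has in mind: the paper's own proof is the one-line remark that the lemma ``follows directly from the definitions of $G_\sigma$ and $M(d)$,'' and your two translations (tiles of type $j$ in $\mathscr{O}_i^\sigma$ $\leftrightarrow$ paths from $i$ to $j$, and positivity of entries of $M(d)^k$ $\leftrightarrow$ existence of $k$-edge walks, using that all $\alpha_{ij}^{(k)}>0$ so no cancellation occurs) are exactly the details being elided.
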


\begin{proof}
	This follows directly from the definitions of $G_\sigma$ and $M(d)$.
\end{proof}
The following result concerns irreducible matrices and is due to Perron
and Frobenius (full statements and proofs can be found in Chapter
XIII of \cite{Gantmacher}).
\begin{thm*}[Perron-Frobenius theorem for irreducible matrices]
	Let $A\in M_{n}(\mathbb{R})$ be a non-negative irreducible
	matrix. 
	\begin{enumerate}
		\item There exists $\mu>0$ which is a simple eigenvalue of $A$, and $\left|\mu_{j}\right|\leq\mu$
		for any other eigenvalue $\mu_{j}$. 
		\item There exists $v\in\mathbb{R}^{n}$ with positive entries such that
		$Av=\mu v$. Moreover every right eigenvector with non-negative entries
		is a positive multiple of $v$.
	\end{enumerate}
\end{thm*}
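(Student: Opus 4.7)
The plan is to prove the Perron--Frobenius theorem using the Collatz--Wielandt variational characterization. Define, for any nonzero $x \in \mathbb{R}_{\geq 0}^n$,
\[
r(x) = \min\bigl\{ (Ax)_i / x_i : x_i > 0 \bigr\},
\]
and set $\mu = \sup\{ r(x) : x \geq 0,\ \|x\|_1 = 1\}$. The aim is to show that $\mu$ is attained, that any maximizer is a strictly positive eigenvector with eigenvalue $\mu$, and then to deduce the dominance and uniqueness assertions. The key technical device is the matrix $B = (I+A)^{n-1}$: by irreducibility, for every pair $i,j$ there is $k \leq n-1$ with $(A^k)_{ij} > 0$, so $B$ has all entries positive, and therefore $Bx > 0$ whenever $x \geq 0$ and $x \neq 0$.

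First I would show the supremum is attained. Since $A$ commutes with $B$, the componentwise inequality $Ax \geq r(x)\,x$ gives $ABx \geq r(x)\,Bx$, hence $r(Bx) \geq r(x)$. So the supremum equals the supremum restricted to the image $B(\Delta)$ of the simplex, which is a compact set of strictly positive vectors. On strictly positive vectors $r$ is continuous (it is the minimum of finitely many continuous functions), so the maximum is attained at some $v > 0$. Next, to upgrade $Av \geq \mu v$ to equality, suppose $Av - \mu v$ is nonnegative but nonzero. Applying $B$ yields $B(Av - \mu v) > 0$ componentwise, i.e.\ $A(Bv) > \mu (Bv)$, giving $r(Bv) > \mu$, a contradiction. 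Thus $Av = \mu v$. Positivity of $\mu$ follows from irreducibility: for any $i$ some entry of $A$ in row $i$ is nonzero, so $r$ takes strictly positive values.

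For the dominance bound $|\mu_j| \leq \mu$, if $Aw = \lambda w$ with $w \in \mathbb{C}^n$, the triangle inequality gives $A|w| \geq |\lambda|\,|w|$ componentwise, so $r(|w|) \geq |\lambda|$ and hence $|\lambda| \leq \mu$. For uniqueness of the nonnegative eigenvector up to scaling, note that any nonnegative eigenvector $u$ of $\mu$ satisfies $u = (1+\mu)^{-(n-1)} Bu > 0$, so it is strictly positive. If a second eigenvector $w$ independent of $v$ existed, one may take it real (split real and imaginary parts, both are eigenvectors), and choose $t \in \mathbb{R}$ so that $v + tw \geq 0$ with at least one zero coordinate; this contradicts the strict positivity just established. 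This proves that the geometric eigenspace of $\mu$ is one-dimensional.

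The part I expect to be the main obstacle is algebraic simplicity, i.e.\ ruling out a nontrivial Jordan block at $\mu$. To handle it, I would apply the same construction to $A^T$, obtaining a strictly positive left eigenvector $u^T A = \mu u^T$. If there were a generalized eigenvector $w$ with $(A - \mu I)w = v$, pairing with $u$ yields
\[
0 = u^T(A - \mu I) w = u^T v,
\]
contradicting $u > 0$, $v > 0$. Together with the one-dimensionality of the eigenspace, this gives algebraic simplicity of $\mu$, completing the proof.
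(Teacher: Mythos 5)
Your argument is a correct and essentially complete rendition of the classical Collatz--Wielandt proof of the Perron--Frobenius theorem. The paper itself does not prove this statement: it is quoted as a known result with an explicit pointer to Chapter XIII of Gantmacher, and only its consequences (Corollary 4.10 and the fixed-scale analysis in Section 6) are used. So there is no in-paper proof to compare against; what you have written is a self-contained substitute for the citation, and all the key steps check out: positivity of $B=(I+A)^{n-1}$ from irreducibility, attainment of the supremum of $r$ on the compact positive set $B(\Delta)$ using $r(Bx)\geq r(x)$, the upgrade of $Av\geq\mu v$ to equality by applying $B$, the dominance bound via the triangle inequality, one-dimensionality of the eigenspace by the line-segment argument, and algebraic simplicity by pairing a putative rank-two generalized eigenvector against the positive left eigenvector of $A^{T}$. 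One small point worth making explicit: the statement asserts that \emph{every} nonnegative right eigenvector is a positive multiple of $v$, which also requires ruling out a nonnegative eigenvector $u$ belonging to some eigenvalue $\lambda\neq\mu$. Your argument as written only treats nonnegative eigenvectors of $\mu$ itself. The fix is one line with a tool you already construct: if $Au=\lambda u$ with $u\geq 0$, $u\neq 0$, then $Bu=(1+\lambda)^{n-1}u>0$ forces $u>0$, and pairing with the strictly positive left eigenvector gives $\mu\,u^{T}_{\mathrm{left}}u=u^{T}_{\mathrm{left}}Au=\lambda\,u^{T}_{\mathrm{left}}u$ with $u^{T}_{\mathrm{left}}u>0$, hence $\lambda=\mu$ and the eigenspace argument applies.
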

\begin{defn}
	The eigenvalue $\mu$ is called the \textit{Perron-Frobenius eigenvalue},
	and an associated positive right eigenvector is called a \textit{right
		Perron-Frobenius eigenvector}.
\end{defn}

\begin{cor}
	\label{cor: The Perron-Frobenius eigenvalue and eigenvector of M(d)}Let
	$M$ be the graph matrix function of a graph associated
	with an irreducible multiscale substitution scheme in $\mathbb{R}^{d}$.
	The Perron-Frobenius eigenvalue of $M(d)$ is $\mu=1$
	and $v_{\vol}$ is a right Perron-Frobenius eigenvector of $M(d)$.
\end{cor}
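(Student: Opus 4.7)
The corollary combines two results that have already been established in the excerpt, so the proof should be a short deduction rather than a fresh argument. The plan is to invoke Lemma \ref{Lemma: M(d)  has eigenvalue 1 and eigenvector vol} together with the Perron--Frobenius theorem, using the preceding lemma that translates irreducibility of the scheme into irreducibility of the matrix $M(d)$.

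First I would note that by the lemma identifying irreducible schemes with irreducible matrices, the hypothesis that $\sigma$ is irreducible ensures that $M(d)$ is a non-negative irreducible matrix, so the Perron--Frobenius theorem applies to it. This produces a Perron--Frobenius eigenvalue $\mu>0$ and an associated positive right eigenvector $v$, unique up to positive scalar multiple, with the further property that any non-negative right eigenvector must be a positive multiple of $v$.

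Next I would invoke Lemma \ref{Lemma: M(d)  has eigenvalue 1 and eigenvector vol}, which exhibits $v_{\vol}=(\vol T_1,\ldots,\vol T_n)^T$ as a positive right eigenvector of $M(d)$ with eigenvalue $1$. Since $v_{\vol}$ has strictly positive entries, the uniqueness clause of Perron--Frobenius forces $v_{\vol}$ to be a positive scalar multiple of $v$, and hence the eigenvalue associated with $v_{\vol}$, namely $1$, must equal $\mu$. This simultaneously identifies $\mu=1$ as the Perron--Frobenius eigenvalue and $v_{\vol}$ as a right Perron--Frobenius eigenvector, completing the proof.

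There is no real obstacle here: the content has already been done in the two preceding results, and the corollary is essentially a bookkeeping statement that packages them into the Perron--Frobenius language that will presumably be convenient for the later analysis of $Q_\sigma$ and the frequency formulas in Theorem \ref{thm:Types, u.d and frequencies}.
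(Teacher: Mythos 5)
Your proposal is correct and follows exactly the paper's route: the paper also deduces the corollary as a straightforward application of the Perron--Frobenius theorem for irreducible matrices combined with Lemma \ref{Lemma: M(d)  has eigenvalue 1 and eigenvector vol}. Your write-up merely makes explicit the uniqueness step (that the positive eigenvector $v_{\vol}$ must be a multiple of the Perron--Frobenius eigenvector), which the paper leaves implicit.
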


\begin{proof}
	This is a straightforward application of the Perron-Frobenius theorem
	for irreducible matrices, combined with Lemma \ref{Lemma: M(d)  has eigenvalue 1 and eigenvector vol}.
\end{proof}

\section{\label{sec:Incommensurable and commensurable and examples}Incommensurable and commensurable schemes}

\begin{defn}\label{def: incommensurable graphs}
	A strongly connected graph $G$ is \textit{incommensurable} if there exist two closed paths $\gamma_{1},\gamma_{2}$ in $G$ which are of
	incommensurable lengths, that is $\frac{l\left(\gamma_{1}\right)}{l\left(\gamma_{2}\right)}\notin\mathbb{Q}$. Otherwise, the graph is called \textit{commensurable}.
\end{defn}

\begin{lem}
	Let $\sigma$ be a multiscale substitution scheme. Then $\sigma$ is incommensurable if and only if the associated graph $G_\sigma$ is incommensurable. In addition, the incommensurability of a scheme depends only on its equivalence	class. 
\end{lem}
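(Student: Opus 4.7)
The plan is to translate between the two notions of incommensurability through the path–tile dictionary developed in Section 4. The key fact is Lemma 4.8: a tile $T\in\mathscr{O}^\sigma_i$ of type $j$ corresponds to a unique path $\gamma_T$ in $G_\sigma$ from vertex $i$ to vertex $j$, with length
\[
l(\gamma_T) \;=\; -\tfrac{1}{d}\log\tfrac{\vol T}{\vol T_j}.
\]
Restricting to $j=i$, this sets up a bijection between tiles of type $i$ in $\mathscr{O}^\sigma_i$ and non-trivial closed paths based at $i\in\mathcal{V}_\sigma$, under which the quantity $\log(\vol T/\vol T_i)$ appearing in Definition 5.1 equals $-d\cdot l(\gamma_T)$. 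Since the multiplicative constant $-d$ is irrelevant for $\mathbb{Q}$-rationality of ratios, witnesses on the two sides correspond exactly, and the equivalence of incommensurability becomes a bookkeeping matter.

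For the forward implication, from tiles $T_1\in\mathscr{O}^\sigma_i$ of type $i$ and $T_2\in\mathscr{O}^\sigma_j$ of type $j$ realising $\log(\vol T_1/\vol T_i)\notin\mathbb{Q}\log(\vol T_2/\vol T_j)$, I would read off the corresponding closed paths $\gamma_1,\gamma_2$ at $i,j$ and compute
\[
\frac{l(\gamma_1)}{l(\gamma_2)} \;=\; \frac{\log(\vol T_1/\vol T_i)}{\log(\vol T_2/\vol T_j)}\notin\mathbb{Q},
\]
showing that $G_\sigma$ is incommensurable. For the converse, given closed paths $\gamma_1,\gamma_2$ in $G_\sigma$ with $l(\gamma_1)/l(\gamma_2)\notin\mathbb{Q}$, I would note that both paths are necessarily non-trivial (a trivial closed path has length $0$, making the ratio undefined), and hence, being based at some vertices $i$ and $j$, they correspond under the bijection to genuine tiles $T_1\in\mathscr{O}^\sigma_i$ of type $i$ and $T_2\in\mathscr{O}^\sigma_j$ of type $j$. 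The same identity read in reverse yields the incommensurability of $\sigma$.

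For the equivalence-class invariance, Lemma 4.4 shows that equivalent schemes $\sigma_1\sim\sigma_2$ produce associated graphs with identical vertex and edge sets, and Corollary 4.6 shows that they share the same set of closed-path lengths. In particular $G_{\sigma_1}$ is incommensurable if and only if $G_{\sigma_2}$ is, and combined with the first part this gives that $\sigma_1$ is incommensurable if and only if $\sigma_2$ is. The only point demanding genuine care is the non-triviality clause in the converse direction, since a trivial tile $T_i$ or a length-zero path would destroy the correspondence; but these are automatically excluded by the definitions (witnesses of incommensurability must have non-zero logarithm and non-zero length, respectively). Beyond that, the proof is essentially a direct substitution of the formula of Lemma 4.8 into the definitions.
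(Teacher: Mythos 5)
Your argument is correct and follows the same route as the paper: both rest on the tile--closed-path bijection together with the length formula $l(\gamma_T)=-\frac{1}{d}\log\frac{\vol T}{\vol T_i}$ from the lemma relating path lengths to tile volumes, and both derive the equivalence-class invariance from the corollary that graphs of equivalent schemes share the same set of closed-path lengths. The paper states the equivalence as ``clear from a comparison of the definitions,'' so your explicit treatment of the two directions and of the non-triviality of the witnessing closed paths is just a fuller writing-out of the same proof.
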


\begin{proof}
	By definition, there is a one-to-one correspondence between tiles in $\mathscr{O}_i^\sigma$ of type $i$, and closed paths in $G_\sigma$ with initial and terminal vertex $i\in\mathcal{V}_\sigma$. If $T$ is such a tile, then by Lemma \ref{lemma: length of path and volume of tile} the associated path $\gamma_T$ is of length 
	\[
	l(\gamma_T)=-\frac{1}{d}\log\frac{\vol T}{\vol T_j}.
	\]
	The equivalence is now clear from a comparison of Definitions \ref{def: incommensurable schemes} and \ref{def: incommensurable graphs} for incommensurable schemes and graphs, respectively. The second statement is immediate from Corollary \ref{cor: Graphs-associated-with equivalent schemes have the same closed paths}, which states that graphs associated with equivalent schemes have the same set of lengths of closed paths.
\end{proof}

\begin{lem}
	\label{lem: Incommensurable equivalent definitions}Let $G$ be a strongly connected directed weighted graph. Then G is incommensurable if and only if the set of lengths of all closed paths in $G$ is not a uniformly discrete subset of $\mathbb{R}$. 
\end{lem}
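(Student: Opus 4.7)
The plan is to prove the two directions separately. For the forward direction, assume $G$ is incommensurable, with closed paths $\gamma_1$ and $\gamma_2$ satisfying $l(\gamma_1)/l(\gamma_2)\notin\mathbb{Q}$. I first reduce to the case that both are based at a common vertex. If $\gamma_1$ is at $u$ and $\gamma_2$ is at $w$, strong connectivity provides paths $p\colon u\to w$ and $q\colon w\to u$, yielding closed paths $p\cdot q$ and $p\cdot\gamma_2\cdot q$ at $u$ of lengths $c$ and $c+l(\gamma_2)$, respectively. If $l(\gamma_1)/c\notin\mathbb{Q}$ I replace $\gamma_2$ by $p\cdot q$; otherwise $l(\gamma_1)/(c+l(\gamma_2))\notin\mathbb{Q}$ (else $l(\gamma_2)$ would be a rational multiple of $l(\gamma_1)$, contradicting the hypothesis), and I replace $\gamma_2$ by $p\cdot\gamma_2\cdot q$. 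In either case I may assume $\gamma_1,\gamma_2$ are closed at a single vertex $v_0$, with lengths $\alpha,\beta$ and $\alpha/\beta\notin\mathbb{Q}$.

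For each $N\geq 1$, the concatenations $\gamma_1^a\cdot\gamma_2^b$ with $0\leq a,b\leq N$ and $(a,b)\neq(0,0)$ give $(N+1)^2-1$ closed paths at $v_0$ whose lengths $a\alpha+b\beta$ are pairwise distinct (by irrationality of $\alpha/\beta$) and lie in the interval $(0,N(\alpha+\beta)]$. The pigeonhole principle supplies two of these lengths within $N(\alpha+\beta)/((N+1)^2-2)$ of each other, a quantity that tends to zero as $N\to\infty$. Hence the set of closed path lengths contains arbitrarily close distinct elements and is not uniformly discrete.

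For the converse, let $\Lambda\subseteq\mathbb{R}$ denote the additive subgroup generated by the set $L$ of all closed path lengths. Every closed walk in $G$ decomposes by a standard peeling argument into simple cycles, so its length is a non-negative integer combination of simple cycle lengths; conversely every simple cycle is itself a closed path. It follows that $\Lambda$ coincides with the $\mathbb{Z}$-span of the finitely many simple cycle lengths, and so $\Lambda$ is finitely generated. Commensurability of $G$ places $L$, and therefore $\Lambda$, inside $\mathbb{Q}\alpha$ for some $\alpha>0$. Any finitely generated subgroup of the one-dimensional $\mathbb{Q}$-vector space $\mathbb{Q}\alpha$ is cyclic, so $\Lambda=\beta\mathbb{Z}$ for some $\beta>0$; consequently $L\subseteq\beta\mathbb{Z}_{>0}$ is uniformly discrete with minimal spacing at least $\beta$. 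The main technical point is the finite generation of $\Lambda$, which ultimately rests on the finiteness of the edge set bounding the number of simple cycles; once this is in hand, the structural argument over $\mathbb{Q}$ forces cyclicity and hence uniform discreteness.
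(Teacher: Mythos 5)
Your proof is correct and follows essentially the same route as the paper: the forward direction is Dirichlet's approximation theorem (which the paper simply cites) proved by hand via pigeonhole on the lengths $a\alpha+b\beta$, with an additional (harmless but unnecessary) reduction to a common base vertex, since distinct closed paths $\gamma_1^q$ and $\gamma_2^p$ already witness non-uniform-discreteness without sharing a base point. The converse matches the paper's argument — finite generation of the length group via decomposition into the finitely many simple cycles, then cyclicity inside $\mathbb{Q}\alpha$ — with the details spelled out more explicitly than in the paper.
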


\begin{proof}
	The proof appears in the introduction of \cite{Graphs}, it is included here for the sake of completeness. First, assume  $G$ is incommensurable. By Dirichlet's
	approximation theorem, for every $\varepsilon>0$ there exist $p,q\in\mathbb{N}$
	such that $\left|l(\gamma_1)q-l(\gamma_2)p\right|<\varepsilon$, and so the set of lengths
	of closed paths in $G$ is not uniformly discrete. Conversely, if
	the set of lengths of closed paths is rationally dependent, then
	the finiteness of the graph implies that there is a finite set $L$
	of lengths for which the length of any closed path in $G$ is a linear
	combination with integer coefficients of elements in $L$. It follows
	that the set of lengths of closed paths in $G$ is uniformly discrete.
\end{proof}

\subsection{Some examples }
\begin{example}
	As we have seen, the Penrose-Robinson scheme illustrated in Figure \ref{fig: Penrose fixed scale substitution scheme}, as well as any other fixed scale scheme, is commensurable.  If the contracting
	constant is $\alpha$, then all edges of the associated
	graph are of equal length $\log\frac{1}{\alpha}$, and so the associated graph is clearly  commensurable.
\end{example}

\begin{example}
	The graph $G_\sigma$ associated with the multiscale substitution scheme on the rectangle $\mathcal{R}$
	and the square $\mathcal{S}$, both illustrated in Figure \ref{fig: Graph associated with MSS of rectangle and square}, is incommensurable. The graph
	$G_\sigma$ contains two loops of lengths $\log3$ and $\log2$ associated with the substitution tiles $\frac{1}{3}\mathcal{S},\frac{2}{3}\mathcal{S}\in\omega_\sigma(\mathcal{S})$, respectively. Since 
	$
	\frac{\log2}{\log3}\notin\mathbb{Q},
	$
	the associated graph $G_\sigma$ is incommensurable.
\end{example}

\begin{example}
	The Rauzy fractal introduced by Rauzy in \cite{Rauzy} is
	an example of a set with non-polygonal boundary that admits a multiscale
	substitution scheme, as illustrated in Figure \ref{fig: Rauzy fractal and its graph}.
	The substitution tiles are $\omega_\sigma(\mathfrak{R})=\left(\frac{1}{\tau}\mathfrak{R},\frac{1}{\tau^2}\mathfrak{R},\frac{1}{\tau^3}\mathfrak{R}\right)$
	where $\tau$ is the tribonacci constant satisfying $\tau^3+\tau^2+\tau=1$. The associated graph has
	a single vertex corresponding to the single prototile $\mathfrak{R}$,
	and three loops of lengths $\log\tau,2\log\tau$ and $3\log\tau$,
	and so the scheme is not fixed scale but is nevertheless commensurable.
	\begin{figure}[H]
		\includegraphics[scale=0.2]{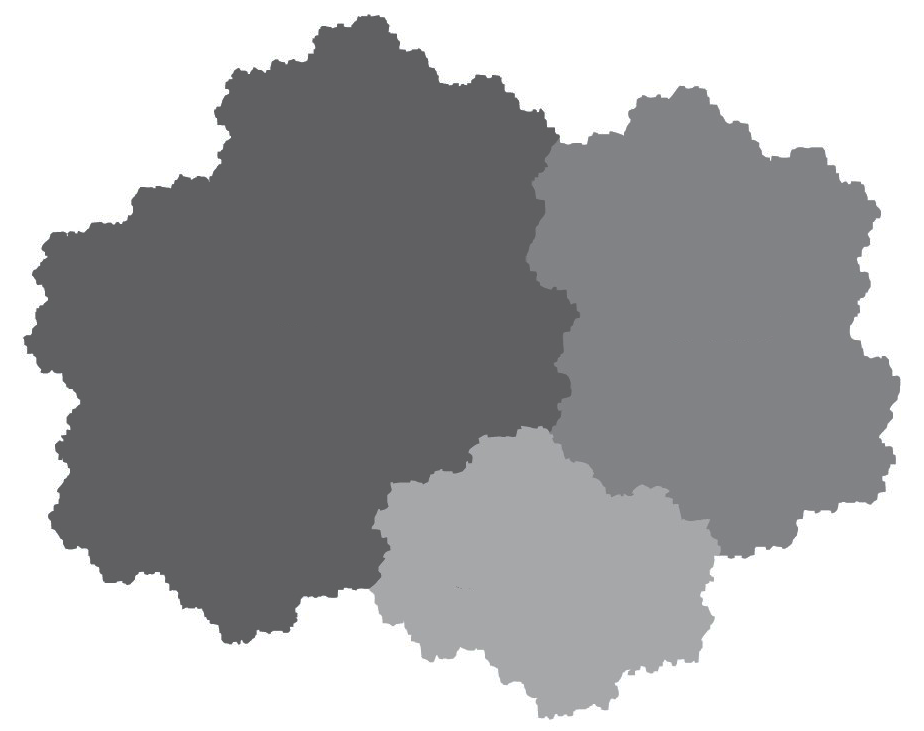}\caption{\label{fig: Rauzy fractal and its graph}The commensurable scheme
			on the Rauzy fractal $\mathfrak{R}$, courtesy of \cite{Wiki}. }
	\end{figure}
\end{example}

\begin{example}
	The $\alpha$-Kakutani scheme on the unit interval $\mathcal{I}$ is incommensurable
	for all but countably many values of $\alpha\in\left(0,1\right)$.
	As illustrated in Figure \ref{fig: Graph associated with kakutani},
	the associated graph consists of a single vertex and two loops of
	lengths $\log\frac{1}{\alpha}$ and $\log\frac{1}{1-\alpha}$, and
	so the $\alpha$-Kakutani scheme is commensurable if and only if $\frac{\log\alpha}{\log\left(1-\alpha\right)}\in\mathbb{Q}$.
	Indeed as we have seen, the $\frac{1}{3}$-Kakutani scheme is incommensurable,
	while the $\frac{1}{\varphi}$-Kakutani scheme is commensurable, where
	$\varphi$ is the golden ratio. 
\end{example}

\begin{example}
	The generalized pinwheel construction introduced by Sadun and used
	in \cite{Sadun} to define and study tilings of the Euclidean
	plane, can be viewed as a one parameter family of multiscale substitution
	schemes. Each scheme is defined on a single prototile which is a right
	triangle with an angle $\theta$, and the substitution rule defines
	a decomposition of the triangle into five rescaled and rotated similar
	triangles, four of which of the same volume, as illustrated in Figure
	\ref{fig: Generalized pinwheel}. The generalized pinwheel scheme
	is incommensurable for all but countably many values of $\theta\in\left(0,\frac{\pi}{2}\right)$.
	Indeed, the two constants of substitution depend on $\theta$ and
	the scheme is commensurable if and only if $\frac{\log\sin\theta}{\log\frac{\cos\theta}{2}}\in\mathbb{Q}$. 
	
	\begin{figure}[H]
		\includegraphics[scale=1.5]{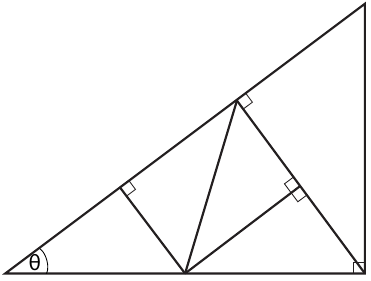}\caption{\label{fig: Generalized pinwheel}The generalized pinwheel scheme.}
	\end{figure}
\end{example}

\begin{rem}
	The tilings of the plane studied in \cite{Sadun} are generated
	by a succession of applications of the substitution rule on triangles,
	each followed by a suitable rescaling and repositioning. The substitution
	rule is always applied to triangles of maximal volume, as is done
	in the Kakutani splitting procedure. This defines a sequence of nested
	partitions of finite regions, the union of which is a tiling of the entire
	plane. The fixed scale scheme defined by putting $\theta=\arctan\frac{1}{2}$
	generates the Conway and Radin's classical pinwheel tiling, see \cite{Radin}.
	Measures associated with sequences of partitions generated by this
	construction are studied in \cite{Olli}, where uniform
	distribution for the corresponding Kakutani sequences is established. 
\end{rem}

\section{\label{sec: sequences generated by fixed scaled}Generation sequences generated by fixed scale substitution schemes}

\subsection{The substitution matrix and the associated graph}
\begin{defn}
	Let $\sigma$ be a fixed scale scheme in $\mathbb{R}^{d}$ with contraction constant $\alpha$. The \textit{substitution matrix} of $\sigma$ is an integer
	valued matrix $S_\sigma$ with entries 
	\[
	(S_\sigma)_{ij}=k_{ij}=\#\left\{ \text{copies of \ensuremath{\alpha T_{j}} in \ensuremath{ \omega_\sigma(T_i)}}\right\} .
	\]
\end{defn}

All edges of the graph $G_\sigma$ associated with a fixed scale substitution
scheme with contraction constant $\alpha$ are of length $\log\frac{1}{\alpha}$.
The entries of the graph matrix function as defined in Section \ref{sec:Graphs}
are thus given by 
\[
M_{ij}(s)=k_{ij}\alpha^{s},
\]
and so $S_\sigma$, which is also the adjacency matrix of $G_\sigma$, can be written
as

\[
S_\sigma=\frac{1}{\alpha^{d}}M(d).
\]
By Lemma \ref{Lemma: M(d)  has eigenvalue 1 and eigenvector vol},
$\mu=1$ is a Perron-Frobenius eigenvalue and $v_{\text{vol}}$ is
a right Perron-Frobenius eigenvector of $M(d)$, and so
\[
S_\sigma\cdot v_{\vol}=\frac{1}{\alpha^{d}}\cdot v_{\vol}.
\]
Therefore $\mu=\frac{1}{\alpha^{d}}$ is a Perron-Frobenius eigenvalue
and $v_{\text{vol}}$ is a right Perron-Frobenius eigenvector of $S_\sigma$.
\begin{example}
	The substitution matrix of the Penrose-Robinson fixed scale substitution
	scheme on a tall triangle $\mathcal{T}$ and a short triangle $\mathcal{S}$
	in $\mathbb{R}^{2}$, as described in Figure \ref{fig: Penrose fixed scale substitution scheme},
	is given by
	\[
	S_\sigma=\left(\begin{array}{cc}
	2 & 1\\
	1 & 1
	\end{array}\right).
	\]
	The Perron-Frobenius eigenvalue of $S_\sigma$ is $\mu=\varphi^{2}$, and
	a right Perron-Frobenius eigenvector can be chosen to be $v=\left(\varphi,1\right)$, where $\varphi$ is the golden ratio.
	Indeed, the contracting constant of the scheme is $\alpha=\frac{1}{\varphi}$,
	and the scale of the prototiles can be chosen so that $\vol \mathcal{T}=\varphi$
	and $\vol\mathcal{S}=1$, and so $v=v_{\vol}$. 
\end{example}

\begin{lem}
	\label{lem: tiles in generations}Let $\sigma$ be a fixed scale scheme, and let $\left\{ \delta_{k}\right\} $ be a
	generation sequence of partitions of $T_{i}\in\tau_\sigma$. Then 
	\[
	\left(S_\sigma^{k}\right)_{ij}=\left|\left\{ \text{{\rm Tiles of type \ensuremath{j} and generation \ensuremath{k} in a tiling of \ensuremath{ T_{i}}}}\right\} \right|.
	\]
\end{lem}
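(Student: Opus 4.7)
The plan is a straightforward induction on the generation $k$, resting on the observation that in any fixed scale scheme, although the family $\Sigma_\sigma$ may contain many distinct substitution rules, every rule $\varrho_\sigma \in \Sigma_\sigma$ applied to the prototile $T_i$ produces a partition whose underlying list of substitution tiles is exactly $\omega_\sigma(T_i)$. Consequently, the number of substitution tiles of type $j$ appearing in $\varrho_\sigma(T_i)$ equals $k_{ij} = (S_\sigma)_{ij}$, independently of which rule is chosen. This type-count invariance across $\Sigma_\sigma$ is the only property needed.

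For the base case $k=1$, the partition $\delta_1$ is obtained by applying some rule from $\Sigma_\sigma$ to $T_i$, so it contains exactly $k_{ij}=(S_\sigma)_{ij}$ tiles of type $j$, matching the claim. For the inductive step, assume the identity holds for generation $k$, so $\delta_k$ contains $(S_\sigma^k)_{ih}$ tiles of type $h$ for each $h$. By the definition of a generation sequence, $\delta_{k+1}$ is obtained by substituting \emph{every} tile in $\delta_k$, each according to some independently chosen rule in $\Sigma_\sigma$. By the observation above, every tile of type $h$ in $\delta_k$, regardless of which rule is applied to it, contributes exactly $k_{hj}=(S_\sigma)_{hj}$ tiles of type $j$ to $\delta_{k+1}$. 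Summing over $h$ yields
\[
\sum_{h=1}^{n} (S_\sigma^k)_{ih}\,(S_\sigma)_{hj} \;=\; (S_\sigma^{k+1})_{ij},
\]
completing the induction.

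There is essentially no obstacle here: the result is purely combinatorial and follows once one notes that the type-counts $k_{ij}$ are built into the tile list $\omega_\sigma(T_i)$ and are therefore shared by all rules in $\Sigma_\sigma$. The only mild point requiring care is the quantifier on the right-hand side of the statement — the cardinality is the same for any realization of the generation sequence, precisely because at each step and each tile the number of offspring of each type is dictated by $S_\sigma$ and not by the particular choice of substitution rule.
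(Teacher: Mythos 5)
Your induction on $k$ is exactly the argument the paper has in mind — it simply cites this as "a standard result about adjacency matrices of graphs, proved by induction on $k$" without writing out the details. Your additional observation that the type-counts $k_{ij}$ are shared by all rules in $\Sigma_\sigma$ is correct and is the (implicit) reason the count is well-defined regardless of the realization of the generation sequence.
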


\begin{proof}
	This is a standard result about adjacency matrices of graphs, and
	is proved by induction on $k$. 
\end{proof}

\subsection{Primitive fixed scale substitution schemes}
\begin{defn}
	An irreducible real valued matrix $A\in M_{n}(\mathbb{R})$
	is called\textit{ primitive} if there exists $k\in\mathbb{N}$ for
	which $A^{k}$ is a matrix with all entries strictly positive, and\textit{
		non-primitive} otherwise. An irreducible fixed scale substitution scheme
	is called \textit{primitive} if the associated substitution matrix
	is primitive, and \textit{non-primitive} otherwise.
\end{defn}

\begin{rem}
	In the case of a fixed scale substitution scheme $\sigma$ with contacting constant
	$\alpha$, the substitution matrix $S_\sigma$ is primitive if and only if
	$G_\sigma$ is \textit{aperiodic}, which in this case means that the greatest
	common divisor of the set of lengths of all closed paths is $\log\frac{1}{\alpha}$. 
\end{rem}

The following is the Perron-Frobenius theorem for primitive matrices,
which gives a stronger result than its counterpart for general irreducible
matrices (see Chapter XIII of \cite{Gantmacher} for more details).
\begin{thm*}[Perron-Frobenius theorem for primitive matrices]
	Let $A\in M_{n}(\mathbb{R})$ be a primitive matrix.
	\begin{enumerate}
		\item There exists $\mu>0$ which is a simple eigenvalue of $A$, and $\left|\mu_{j}\right|<\mu$
		for any other eigenvalue $\mu_{j}$. 
		\item There exist $v,u\in\mathbb{R}^{n}$ with positive entries such that
		$Av=\mu v$ and $u^{T}A=\mu u^{T}$. Moreover, every right eigenvector
		with non-negative entries is a positive multiple of $v$, and every
		left eigenvector with non-negative entries is a positive multiple
		of $u$.
		\item The following holds
		\[
		\lim_{k\rightarrow\infty}\left(\frac{1}{\mu}A\right)^{k}=\frac{vu^{T}}{u^{T}v}.
		\]
	\end{enumerate}
\end{thm*}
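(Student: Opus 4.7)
The plan is to leverage the already stated Perron-Frobenius theorem for irreducible matrices and strengthen it using the extra information that some power $A^{k_0}$ is strictly positive. First I would observe that every primitive matrix is irreducible: positivity of $(A^{k_0})_{ij}$ for all $i,j$ supplies for each pair of indices a $k$ with $(A^k)_{ij}>0$. The irreducible theorem then hands over a simple positive eigenvalue $\mu$ with a positive right eigenvector $v$, unique up to scalar among non-negative eigenvectors; applying the same theorem to $A^T$, which is also primitive, yields the analogous positive left eigenvector $u$. This already delivers part (2) in full and the weak inequality $|\mu_j|\le\mu$ in part (1).

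The real content of (1) is the strict inequality $|\mu_j|<\mu$ for every other eigenvalue. For this I would pass to $A^{k_0}$, a strictly positive matrix, and invoke Perron's classical theorem for strictly positive matrices, which guarantees that the spectral radius is attained by a unique simple eigenvalue. Any other eigenvalue of $A$ satisfies
\[
|\mu_j|^{k_0} \;=\; |\mu_j^{k_0}| \;<\; \mu^{k_0},
\]
hence $|\mu_j|<\mu$. This is the single point where primitivity does work beyond irreducibility.

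For part (3), the spectral gap makes a Jordan decomposition argument routine. Write $A/\mu = SJS^{-1}$ with $J$ in Jordan normal form. Since $1$ is a simple eigenvalue of $A/\mu$, it contributes a $1\times 1$ block $[1]$; every other Jordan block belongs to an eigenvalue of modulus strictly less than $1$, so its powers tend to the zero matrix. Thus $(A/\mu)^k$ converges to the spectral projector $P$ onto the $\mu$-eigenspace of $A$. Rank one together with $AP=PA=\mu P$ forces $P=c\,vu^T$ for some scalar $c$, and normalizing via $P^2=P$, which is meaningful since $u^Tv>0$ (both $u$ and $v$ are positive), pins down $c=1/(u^Tv)$.

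The main obstacle is exactly the strict inequality in (1): irreducibility alone permits eigenvalues of modulus $\mu$ distinct from $\mu$ itself, namely the roots of unity times $\mu$ coming from the index of imprimitivity, and excluding them is indispensable both for the statement and, via the Jordan argument, for the convergence in (3). The reduction to $A^{k_0}$ isolates this step cleanly and keeps the remainder of the proof mechanical.
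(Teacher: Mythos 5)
The paper does not prove this statement at all: it is quoted as a classical result, with a pointer to Chapter XIII of Gantmacher, and is then \emph{used} (via the Perron projection) in the proofs of Theorem \ref{Thm: main result for generation sequences} and Theorem \ref{Thm: frequencies for generation fixed scale}. So there is no in-paper argument to compare against; what can be assessed is whether your derivation is sound, and it essentially is. The reduction of parts (1)--(2) to the already-stated irreducible Perron--Frobenius theorem (noting that $A^{T}$ is primitive whenever $A$ is, and has the same spectral radius) is correct, and the Jordan-form argument for (3) is the standard one: the spectral gap forces $(A/\mu)^{k}$ to converge to the rank-one spectral projector $P$, and the relations $AP=PA=\mu P$, $P^{2}=P$, $\rank P=1$ pin down $P=vu^{T}/(u^{T}v)$, with $u^{T}v>0$ guaranteed by positivity of $u$ and $v$. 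One small point deserves to be made explicit in your spectral-gap step: if $\mu_{j}\neq\mu$ but $|\mu_{j}|=\mu$, it can happen that $\mu_{j}^{k_{0}}=\mu^{k_{0}}$ exactly (this is precisely what occurs for the roots of unity coming from the index of imprimitivity), in which case you do not directly get a second eigenvalue of $A^{k_{0}}$ of maximal modulus distinct from $\mu^{k_{0}}$; instead you get that $\mu^{k_{0}}$ has algebraic multiplicity at least two as an eigenvalue of $A^{k_{0}}$, which contradicts the \emph{simplicity} clause of Perron's theorem for the positive matrix $A^{k_{0}}$. Either branch yields the contradiction, so the conclusion $|\mu_{j}|<\mu$ stands, but the inequality as you wrote it silently conflates the two cases.
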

\begin{defn}
	The limit matrix $P=\frac{vu^{T}}{u^{T}v}\in M_{n}(\mathbb{R})$
	is called the \textit{Perron projection} of $A$.
\end{defn}

\subsection{Irreducible non-primitive fixed scale substitution schemes}
\begin{defn}
	A matrix $A\in M_{n}(\mathbb{R})$ is a \textit{cyclic
		matrix of period} $p$ if up to a permutation of the indices, $A$
	is of the block form 
	\[
	A=\begin{pmatrix}0_{0} & A_{0}\\
	& 0_{1} & \ddots\\
	&  & \ddots & A_{p-2}\\
	A_{p-1} &  &  & 0_{p-1}
	\end{pmatrix},
	\]
	where $n_{0}+\cdots+n_{p-1}=n$, and $0_{r}\in M_{n_{r}}(\mathbb{R})$
	is the square zero matrix of order $n_{r}$.
\end{defn}

Proofs of the following result can be found in the discussion on irreducible
matrices in Part 1 of \cite{Seneta}.
\begin{lem}
	\label{lem:non-primitive matrix structure}Let $A\in M_{n}(\mathbb{R})$
	be an irreducible non-primitive matrix with Perron-Frobenius eigenvalue
	$\mu$. Then there exists $p\geq2$ such that $A$ is a cyclic matrix
	of period $p$. In addition 
	\[
	A^{p}=\begin{pmatrix}B_{0}\\
	& \ddots\\
	&  & B_{p-1}
	\end{pmatrix},
	\]
	where $B_{r}\in M_{n_{r}}(\mathbb{R})$ are primitive square
	matrices with Perron-Frobenius eigenvalue $\mu^{p}$.
\end{lem}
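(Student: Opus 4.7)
The plan is to work graph-theoretically. Associate with $A$ the directed graph $G_A$ on vertices $\{1,\ldots,n\}$ with an edge $i \to j$ precisely when $A_{ij} > 0$. Irreducibility of $A$ is equivalent to strong connectivity of $G_A$. Define $p = \gcd\{l(\gamma) : \gamma \text{ is a closed path in } G_A\}$, the index of imprimitivity. A standard characterization states that $A$ is primitive iff $p = 1$, so the hypothesis of non-primitivity gives $p \geq 2$ immediately.

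Next I would produce the cyclic partition underlying the block form. Fix a base vertex $v_0$ and, for every vertex $v$, let $r(v) \in \{0,\ldots,p-1\}$ be the residue modulo $p$ of the length of any path from $v_0$ to $v$. This is well-defined: given two such paths of lengths $l_1, l_2$, concatenating each with a fixed return path from $v$ to $v_0$ (which exists by strong connectivity) produces closed paths, whose lengths are divisible by $p$, forcing $l_1 \equiv l_2 \pmod{p}$. Setting $V_r = \{v : r(v) = r\}$ yields a partition, and any edge from $V_r$ must terminate in $V_{r+1 \bmod p}$. Relabeling indices so that $V_0, V_1, \ldots, V_{p-1}$ are listed in order realizes $A$ in the required cyclic block form, with $A_r$ recording the edges $V_r \to V_{r+1 \bmod p}$.

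Now I would analyze $A^p$. Any walk of length $p$ from a vertex in $V_r$ must end back in $V_r$, so $A^p$ is block diagonal with blocks $B_0,\ldots,B_{p-1}$ supported on the $V_r$. For irreducibility of each $B_r$: any two vertices $u,v \in V_r$ are joined in $G_A$ by a path whose length is a multiple of $p$ (compose a path $u \to v$ of length $\ell$ with a closed path at $v$ of length $-\ell \bmod p$; gcd considerations produce the needed closed path), and this path corresponds to a walk in the reduced graph of $B_r$. For primitivity: if the index of imprimitivity of $B_r$ were some $d \geq 2$, then $pd$ would divide every closed path length in $G_A$ meeting $V_r$, contradicting the maximality of $p$ as the gcd.

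The step I expect to require the most care is verifying that the Perron--Frobenius eigenvalue of each $B_r$ is exactly $\mu^p$, not a smaller number. For this I would decompose a positive Perron--Frobenius eigenvector $v = (v^{(0)}, \ldots, v^{(p-1)})$ of $A$ according to the partition. The relation $Av = \mu v$ combined with the cyclic structure gives $A_r v^{(r+1)} = \mu v^{(r)}$ for each $r$, and iterating around the cycle yields $B_r v^{(r)} = \mu^p v^{(r)}$. Since $v^{(r)}$ is entrywise positive and $B_r$ is primitive, the Perron--Frobenius theorem for primitive matrices (already stated in the paper) identifies $\mu^p$ as the Perron--Frobenius eigenvalue of $B_r$. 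This eigenvector-tracking across blocks, together with ruling out higher-period behavior of the reduced graphs, is the only genuinely delicate part of the argument; everything else is a direct translation between the combinatorics of $G_A$ and the block structure of $A$.
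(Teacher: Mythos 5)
Your argument is correct, and it is worth noting that the paper itself offers no proof of this lemma at all: it simply cites the discussion of irreducible matrices in Part 1 of Seneta's book. What you have written is essentially the standard proof found there (and in Gantmacher), reconstructed in full: the index of imprimitivity $p$ as the gcd of closed-path lengths, the residue-class partition $V_0,\ldots,V_{p-1}$ giving the cyclic block form, the observation that every closed path of positive length in $G_A$ meets every class (so a period $d\geq 2$ for some $B_r$ would force $pd$ to divide all closed-path lengths, contradicting the definition of $p$), and the eigenvector-tracking $A_r v^{(r+1)}=\mu v^{(r)}$, which composes around the cycle to $B_r v^{(r)}=\mu^p v^{(r)}$ and pins down $\mu^p$ as the Perron--Frobenius eigenvalue since $v^{(r)}$ is positive. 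One small redundancy: in the irreducibility step for $B_r$ you do not need to splice in an auxiliary closed path of length $-\ell \bmod p$, because for $u,v\in V_r$ \emph{every} path $u\to v$ already has length divisible by $p$ (concatenate with a path $v_0\to u$ and compare residues), so it descends directly to a walk in the graph of $B_r$. That parenthetical is the only slightly muddled spot; everything else is sound, and your writeup supplies a self-contained proof where the paper relies on a reference.
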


\begin{cor}
	Let $\sigma$ be an irreducible non-primitive fixed scale substitution scheme.
	Then the prototiles in $\tau_\sigma$ can be divided into $p$ \textit{classes} $\mathcal{C}_{0},\ldots,\mathcal{C}_{p-1}$,
	so that the substitution tiles of prototile in $\mathcal{C}_{r}$
	contain only tiles of types in $\mathcal{C}_{r+1\left(\mod p\right)}$. 
\end{cor}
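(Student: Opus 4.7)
The plan is to deduce this corollary almost immediately from Lemma \ref{lem:non-primitive matrix structure} applied to the substitution matrix $S_\sigma$, by translating the cyclic block structure back into the language of tiles. The only content is bookkeeping: unwinding the definition of $(S_\sigma)_{ij} = k_{ij}$ so that ``block zero'' in the matrix becomes ``no substitution tile of that type appears.''

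First, I would observe that $\sigma$ being irreducible means $S_\sigma$ is irreducible (this is already recorded in the preliminaries via the equivalence between irreducibility of the scheme, strong connectedness of $G_\sigma$, and irreducibility of $M(d) = \alpha^d S_\sigma$). Combined with the non-primitivity assumption, Lemma \ref{lem:non-primitive matrix structure} applies, yielding an integer $p \geq 2$ and a permutation of the index set $\{1,\ldots,n\}$ under which $S_\sigma$ takes the cyclic block form with zero diagonal blocks $0_r$ of sizes $n_r$ and off-diagonal blocks $A_r$ going from rows indexed by the $r$-th block to columns indexed by the $(r+1 \bmod p)$-th block.

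Next, I would let $\mathcal{I}_0,\ldots,\mathcal{I}_{p-1}$ be the index sets corresponding to these blocks under the permutation, and define
\[
\mathcal{C}_r = \{\, T_i \in \tau_\sigma \,:\, i \in \mathcal{I}_r \,\}, \qquad r = 0, 1, \ldots, p-1.
\]
These are disjoint and their union is $\tau_\sigma$, since the $\mathcal{I}_r$ partition $\{1,\ldots,n\}$. The cyclic block structure says precisely that $(S_\sigma)_{ij} = 0$ whenever $i \in \mathcal{I}_r$ but $j \notin \mathcal{I}_{r+1 \bmod p}$. Since by definition $(S_\sigma)_{ij} = k_{ij}$ is the number of copies of $\alpha T_j$ that appear in $\omega_\sigma(T_i)$, this vanishing of $k_{ij}$ means that no substitution tile of type $j$ appears in $\omega_\sigma(T_i)$ when $T_i \in \mathcal{C}_r$ and $T_j \notin \mathcal{C}_{r+1 \bmod p}$. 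Equivalently, every substitution tile in $\omega_\sigma(T_i)$ has type in $\mathcal{C}_{r+1 \bmod p}$, which is the claim.

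There is no real obstacle here; the statement is essentially a restatement of the matrix lemma. The only minor care is to note that the permutation that puts $S_\sigma$ into cyclic block form is a relabeling of prototiles and does not affect the scheme itself, so the partition $\mathcal{C}_0, \ldots, \mathcal{C}_{p-1}$ obtained this way is intrinsic to $\sigma$.
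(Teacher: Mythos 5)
Your proposal is correct and is exactly the argument the paper intends: the corollary is stated as an immediate consequence of Lemma \ref{lem:non-primitive matrix structure} applied to the irreducible non-primitive matrix $S_\sigma$, with the classes $\mathcal{C}_r$ read off from the cyclic block structure and the vanishing $(S_\sigma)_{ij}=k_{ij}=0$ translated into the absence of type-$j$ substitution tiles in $\omega_\sigma(T_i)$. No gaps; the bookkeeping is exactly as the paper leaves it to the reader.
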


In addition, the associated generation sequences of partition have
the following useful property.
\begin{cor}
	\label{cor: generation of non-primitive is a union of generation}Let
	$\sigma$
	be an irreducible non-primitive scheme on, and let $\left\{ \delta_{k}\right\} $ be a
	generation sequence of partitions of $ T_{i}\in\tau_\sigma$.
	Assume without loss of generality that $ T_{i}\in\mathcal{C}_{0}$. 
	\begin{enumerate}
		\item The sequence $\left\{ \delta_{k}\right\} $ consists of $p$ subsequences
		$\left\{ \delta_{pk+r}\right\} $, and partition $\delta_{pk+r}$
		contains only tiles of class $\mathcal{C}_{r}$.
		\item The subsequence $\left\{ \delta_{pk+r}\right\} $ is geometrically
		identical to the union of generation sequences of partitions of tiles
		which appear in the partition $\delta_{r}$, generated by an irreducible
		primitive fixed scale substitution scheme on $\mathcal{C}_{r}$. The
		substitution matrix of this scheme is the primitive matrix $B_{r}$
		described in Lemma \ref{lem:non-primitive matrix structure}, and
		the contraction constant is $\alpha^{p}$.
	\end{enumerate}
\end{cor}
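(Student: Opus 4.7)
The plan is to prove both parts by leveraging the block-cyclic structure of $S_\sigma$ given by Lemma \ref{lem:non-primitive matrix structure}, together with the class decomposition $\tau_\sigma=\mathcal{C}_0\sqcup\cdots\sqcup\mathcal{C}_{p-1}$ recorded in the preceding corollary. The key observation driving everything is that a single substitution step sends tiles of type in $\mathcal{C}_r$ to tiles of types in $\mathcal{C}_{r+1\,(\mathrm{mod}\,p)}$, so $p$ consecutive substitution steps preserve the class.

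For part (1), I would argue by induction on $k$. The base case $\delta_0=T_i\in\mathcal{C}_0$ is immediate. For the inductive step, if $\delta_{k-1}$ consists entirely of tiles of class $\mathcal{C}_{(k-1)\,(\mathrm{mod}\,p)}$, then by definition $\delta_k$ is obtained by substituting every tile in $\delta_{k-1}$, and by the class-preserving property of the substitution rule (from the corollary immediately preceding Corollary \ref{cor: generation of non-primitive is a union of generation}), every resulting tile lies in $\mathcal{C}_{k\,(\mathrm{mod}\,p)}$. Writing $k=pk'+r$ with $0\le r<p$ finishes the claim.

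For part (2), I would construct an auxiliary irreducible fixed scale substitution scheme $\sigma_r$ whose prototile set is $\mathcal{C}_r$, whose contraction constant is $\alpha^p$, and whose substitution tiles are obtained by applying $p$ consecutive substitutions of $\sigma$. Concretely, for each $T_j\in\mathcal{C}_r$ and each choice of substitution rules $\varrho_\sigma^{(1)},\ldots,\varrho_\sigma^{(p)}\in\Sigma_\sigma$, the iterated image $\varrho_\sigma^{(p)}\circ\cdots\circ\varrho_\sigma^{(1)}(T_j)$ produces a partition of $T_j$ into tiles in $\mathcal{C}_r$, each scaled by $\alpha^p$; these constitute $\omega_{\sigma_r}(T_j)$ and the list of partitions gives $\Sigma_{\sigma_r}$. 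The substitution matrix of $\sigma_r$ is then
\[
(S_{\sigma_r})_{ij}=\#\{\text{copies of }\alpha^p T_j\text{ in a partition of }T_i\text{ after }p\text{ substitution steps of }\sigma\}=(S_\sigma^p)_{ij}=(B_r)_{ij}
\]
by Lemma \ref{lem: tiles in generations} and Lemma \ref{lem:non-primitive matrix structure}, the latter equality holding because by part (1) only types in $\mathcal{C}_r$ are reachable from $T_i\in\mathcal{C}_r$ in $p$ steps. Since $B_r$ is primitive, $\sigma_r$ is irreducible and primitive.

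To conclude, I would verify the geometric identification. By part (1), the partition $\delta_r$ of $T_i$ consists of finitely many tiles $\{T^{(1)},\ldots,T^{(N)}\}$ of class $\mathcal{C}_r$. For each $T^{(s)}$, let $\{\delta_{k}^{(s)}\}$ be the generation sequence of partitions of $T^{(s)}$ under $\sigma_r$ (with choices of rules in $\Sigma_{\sigma_r}$ matching the blocks of $p$ choices used in $\{\delta_k\}$). An induction on $k$ then shows that $\delta_{pk+r}$ coincides set-theoretically with $\bigsqcup_{s=1}^N \delta_k^{(s)}$, since passing from $\delta_{pk+r}$ to $\delta_{p(k+1)+r}$ amounts to applying $p$ substitution steps of $\sigma$ to each tile, which is exactly one substitution step of $\sigma_r$. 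The main subtlety to keep track of is bookkeeping the non-constant configuration case: one must encode each length-$p$ tuple of substitution rules in $\Sigma_\sigma$ as a single rule in $\Sigma_{\sigma_r}$ and check that every substitution rule in $\Sigma_{\sigma_r}$ arises this way, which is routine from the definitions.
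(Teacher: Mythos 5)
Your proposal is correct and is exactly the argument the paper intends: the paper states this corollary without proof as an immediate consequence of the block-cyclic structure of $S_\sigma$ from Lemma \ref{lem:non-primitive matrix structure} and the class decomposition in the preceding corollary, and your induction for part (1) plus the $p$-fold-iterated scheme $\sigma_r$ with substitution matrix $B_r$ and contraction constant $\alpha^p$ for part (2) fills in precisely those details. The only slight imprecision is that in the non-constant-configuration case an element of $\Sigma_{\sigma_r}$ is indexed by a depth-$p$ tree of independent per-tile rule choices rather than a length-$p$ tuple, but you flag this bookkeeping explicitly and it does not affect the argument.
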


\begin{example}
	The substitution defined in Figure \ref{fig:non-primitive} is an
	irreducible non-primitive fixed scale substitution scheme on a square
	and two rectangles, with period $p=2$ and contraction constant $\alpha=\frac{1}{\sqrt{3}}$.
	\begin{figure}[H]
		\includegraphics[scale=0.8]{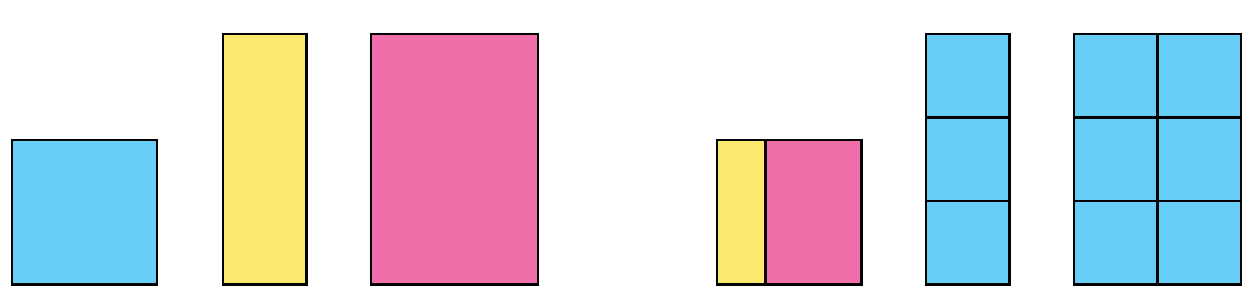}\caption{\label{fig:non-primitive}A non-primitive fixed scale substitution
			scheme with period $p=2$.}
	\end{figure}
	The first class of prototiles $\mathcal{C}_{0}$ consists of the square,
	and the second class $\mathcal{C}_{1}$ consists of the two rectangles,
	therefore $n_{0}=1$ and $n_{1}=2$. The substitution matrix $S_\sigma$
	and the block diagonal $S_\sigma^{2}$, with two primitive blocks $B_{0}$
	and $B_{1}$, are given by
	\[
	S_\sigma=\left(\begin{array}{ccc}
	0 & 1 & 1\\
	3 & 0 & 0\\
	6 & 0 & 0
	\end{array}\right),\,\,\,S_\sigma^{2}=\left(\begin{array}{ccc}
	9 & 0 & 0\\
	0 & 3 & 3\\
	0 & 6 & 6
	\end{array}\right).
	\]
\end{example}

\subsection{Proof of uniform distribution of generation sequences generated by
	fixed scale schemes}
\begin{proof}[Proof of Theorem \ref{Thm: main result for generation sequences}]
	Let $T\in\mathscr{T}^\sigma_{i}$ and assume that $T$ is a tile of type $j$
	that appears at partition $\delta_{k_{0}}$.   
	
	First, assume that scheme $\sigma$ is primitive. By Lemma \ref{lem: tiles in generations},
	for any $k>k_{0}$ we have
	\[
	\frac{\left|x_{k}\cap T\right|}{\left|x_{k}\right|}=\frac{\sum_{h=1}^{n}\left(S_\sigma^{k-k_{0}}\right)_{jh}}{\sum_{h=1}^{n}\left(S_\sigma^{k}\right)_{jh}}=\frac{\left(S_\sigma^{k-k_{0}}\cdot\boldsymbol{1}\right)_{j}}{\left(S_\sigma^{k}\cdot\boldsymbol{1}\right)_{i}},
	\]
	where $\boldsymbol{1}=\left(1,\ldots,1\right)^{T}\in\mathbb{R}^{n}$.
	The Perron-Frobenius eigenvalue of $S_\sigma$ is $\mu=\frac{1}{\alpha^{d}}$
	and $v_{\vol}$ is a Perron-Frobenius eigenvector of $S_\sigma$. The matrix
	$S_\sigma$ is primitive, the Perron projection of $S_\sigma$ satisfies
	\[
	\lim_{k\rightarrow\infty}\left(\frac{1}{\mu}S_\sigma\right)^{k}=\lim_{k\rightarrow\infty}\left(\alpha^{d}S_\sigma\right)^{k}=P,
	\]
	and the columns of $P$ are spanned by $v_{\vol}$. From this we get
	\[
	\begin{aligned}\lim_{k\rightarrow\infty}\frac{\left|x_{k}\cap T\right|}{\left|x_{k}\right|} & =\lim_{k\rightarrow\infty}\frac{\left(S_\sigma^{k-k_{0}}\cdot\boldsymbol{1}\right)_{j}}{\left(S_\sigma^{k}\cdot\boldsymbol{1}\right)_{i}}=\lim_{k\rightarrow\infty}\left(\alpha^{d}\right)^{k_{0}}\cdot\frac{\left(\alpha^{d}\right)^{k-k_{0}}\left(S_\sigma^{k-k_{0}}\cdot\boldsymbol{1}\right)_{j}}{\left(\alpha^{d}\right)^{k}\left(S_\sigma^{k}\cdot\boldsymbol{1}\right)_{i}}\\
	& =\left(\alpha^{d}\right)^{k_{0}}\lim_{k\rightarrow\infty}\frac{\left(\left(\alpha^{d}S_\sigma\right)^{k-k_{0}}\cdot\boldsymbol{1}\right)_{j}}{\left(\left(\alpha^{d}S_\sigma\right)^{k}\cdot\boldsymbol{1}\right)_{i}}=\left(\alpha^{d}\right)^{k_{0}}\cdot\frac{\left(P\cdot\boldsymbol{1}\right)_{j}}{\left(P\cdot\boldsymbol{1}\right)_{i}}\\
	& =\left(\alpha^{d}\right)^{k_{0}}\cdot\frac{\vol T_{j}}{\vol T_{i}}=\frac{\vol\left(\alpha^{k_{0}}T_{j}\right)}{\vol T_{i}}=\frac{\vol T}{\vol T_{i}}.
	\end{aligned}
	\]
	By Lemma \ref{Lemma: counting implies uniform distribution} uniform
	distribution of $\left\{ \delta_{k}\right\} $ is established for
	the primitive case. 
	
	Next, assume the scheme is non-primitive with period $p$, and assume
	without loss of generality that $ T_{i}\in\mathcal{C}_{0}$.
	Let $T\in\mathscr{T}^\sigma_{i}$ and assume that $T\in\delta_{k_{0}}$ and
	$k_{0}\equiv r_{0}\,\,\mod p$, so $T$ is a rescaled copy of a tile
	in $\mathcal{C}_{r_{0}}$. For every $0\leq c\leq p-1$, let $T_{1}^{\left(c\right)},\ldots,T_{s_{c}}^{\left(c\right)}$
	be the tiles in the partition $\delta_{k_{0}+c}$ such that 
	\[
	T=\bigsqcup_{i=1}^{s_{c}}T_{i}^{\left(c\right)}.
	\]
	Let $0\leq r\leq p-1$ and let $c\equiv r-r_{0}\,\,\mod p$. By Corollary
	\ref{cor: generation of non-primitive is a union of generation} and
	what we have just proved for primitive schemes 
	\[
	\lim_{k\rightarrow\infty}\frac{\left|x_{pk+r}\cap T\right|}{\left|x_{pk+r}\right|}=\sum_{i=1}^{s_{c}}\lim_{k\rightarrow\infty}\frac{\left|x_{pk+r}\cap T_{i}\right|}{\left|x_{pk+r}\right|}=\sum_{i=1}^{s_{c}}\frac{\vol T_{i}}{\vol T_{i}}=\frac{\vol T}{\vol T_{i}}.
	\]
	The limit does not depend on $r$, and so 
	\[
	\lim_{k\rightarrow\infty}\frac{\left|x_{k}\cap T\right|}{\left|x_{k}\right|}=\frac{\vol T}{\vol T_{i}}.
	\]
	By Lemma \ref{Lemma: counting implies uniform distribution} this finishes the proof of Theorem \ref{Thm: main result for generation sequences}.
\end{proof}

\subsection{Type frequencies in generation sequences generated by primitive fixed
	scale schemes}

The following is the counterpart of Theorem \ref{thm:Types, u.d and frequencies}
for generation sequences generated by primitive fixed scale substitution
schemes. These results follow from the theory of Perron-Frobenius, and are included
here to allow comparison with their incommensurable counterparts.
\begin{thm}
	\label{Thm: frequencies for generation fixed scale}Let $\sigma$ be an irreducible primitive fixed scale scheme in $\mathbb{R}^{d}$ with contraction constant $\alpha$. Let $\left\{ \delta_{k}\right\} $ be a
	generation sequence of partitions of $ T_{i}\in\tau_\sigma$, and let $1\leq r\leq n$. 
	\begin{enumerate}
		\item Any $r$-marking sequence $\{ x_{k}^{\left(r\right)}\} $
		of $\left\{ \delta_{k}\right\} $ is uniformly distributed in $ T_{i}$. 
		\item The ratio between the number of tiles of type $r$ in $\delta_{k}$
		and the total number of tiles is
		\[
		u_{r}+o\left(1\right),\quad k\rightarrow\infty,
		\]
		where $u=\left(u_{1},\ldots,u_{n}\right)^{T}$ is a left Perron-Frobenius
		eigenvector of $S$ normalized so that $\sum u_{j}=1$.
		\item The volume of the region covered by tiles of type $r$ in $\delta_{k}$
		is 
		\[
		\vol\left( T_{i}\right)w_{r}+o\left(1\right),\quad k\rightarrow\infty,
		\]
		where $w=\left(w_{1},\ldots,w_{n}\right)^{T}$ is a left Perron-Frobenius
		eigenvector of $M(d)$ normalized such that $\sum w_{j}=1$,
		and $M$ is the graph matrix function of the graph associated
		with the equivalent normalized scheme.
	\end{enumerate}
\end{thm}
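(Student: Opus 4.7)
The plan is to reduce all three parts to a standard Perron--Frobenius analysis of the substitution matrix $S_\sigma$, combined with the count formula of Lemma~\ref{lem: tiles in generations}. Since the scheme is fixed scale one has $M_\sigma(d) = \alpha^d S_\sigma$, so by Corollary~\ref{cor: The Perron-Frobenius eigenvalue and eigenvector of M(d)} the Perron--Frobenius eigenvalue of $S_\sigma$ is $\mu = 1/\alpha^d$ with right PF eigenvector $v_\vol = (\vol T_1,\dots,\vol T_n)^T$. Primitivity of $S_\sigma$ yields
\[
(\alpha^d S_\sigma)^k \longrightarrow P := \frac{v_\vol\, u^T}{u^T v_\vol},
\]
where $u$ is a left PF eigenvector of $S_\sigma$; all three statements will come from this convergence.

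For part~(1), I would fix $T\in\mathscr{T}_i^\sigma$ of type $j$ first appearing in $\delta_{k_0}$, and observe that for $k > k_0$,
\[
\frac{|x_k^{(r)}\cap T|}{|x_k^{(r)}|} \;=\; \frac{(S_\sigma^{k-k_0})_{jr}}{(S_\sigma^{k})_{ir}} \;=\; (\alpha^d)^{k_0}\cdot \frac{((\alpha^d S_\sigma)^{k-k_0})_{jr}}{((\alpha^d S_\sigma)^{k})_{ir}} \;\longrightarrow\; (\alpha^d)^{k_0}\,\frac{P_{jr}}{P_{ir}} \;=\; (\alpha^d)^{k_0}\,\frac{\vol T_j}{\vol T_i} \;=\; \frac{\vol T}{\vol T_i},
\]
exactly as in the proof of Theorem~\ref{Thm: main result for generation sequences}. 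The hypotheses of Lemma~\ref{Lemma: counting implies uniform distribution} are then satisfied for the point sequence $\{x_k^{(r)}\}$: its proof uses only this ratio condition together with the tile-diameter bound of Lemma~\ref{lem: tiles shrink in sequences}, and so applies verbatim to $r$-marking sequences. For part~(2), the analogous rescaling trick gives
\[
\frac{(S_\sigma^k)_{ir}}{(S_\sigma^k\boldsymbol{1})_i} \;=\; \frac{((\alpha^d S_\sigma)^k)_{ir}}{((\alpha^d S_\sigma)^k\boldsymbol{1})_i} \;\longrightarrow\; \frac{P_{ir}}{(P\boldsymbol{1})_i} \;=\; \frac{u_r}{\sum_j u_j} \;=\; u_r
\]
under the normalization $\sum_j u_j = 1$.

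For part~(3), since every type-$r$ tile in $\delta_k$ is isometric to $\alpha^k T_r$, the covered volume equals $(S_\sigma^k)_{ir}\cdot \alpha^{kd}\vol T_r = ((\alpha^d S_\sigma)^k)_{ir}\vol T_r$, which tends to $P_{ir}\vol T_r = \vol T_i\cdot u_r \vol T_r/(u^T v_\vol)$. The remaining task is to match this with $\vol T_i\cdot w_r$, where $w$ lives on $M(d)$ of the \emph{normalized} equivalent scheme. For this, a direct computation from $\beta_{ij}^{(k)} = (\vol T_j/\vol T_i)^{1/d}\alpha$ yields the diagonal similarity
\[
M_{\sigma_{\text{norm}}}(d) \;=\; D^{-1}(\alpha^d S_\sigma)\,D, \qquad D = \diag(\vol T_1,\dots,\vol T_n),
\]
so that a left PF eigenvector $w$ of $M_{\sigma_{\text{norm}}}(d)$ corresponds to $D^{-1}w$ being a left PF eigenvector of $S_\sigma$; hence $w_j = c u_j\vol T_j$, and the normalization $\sum_j w_j = 1$ forces $c = 1/(u^T v_\vol)$. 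This gives $w_r = u_r\vol T_r/(u^T v_\vol)$, matching the limit above. The only substantive bookkeeping in the entire argument is this diagonal similarity, which transports PF data between the original scheme and its normalized equivalent; once it is in place, every other step is a routine application of Perron--Frobenius for primitive matrices.
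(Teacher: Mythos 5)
Your proposal is correct and follows essentially the same route as the paper: parts (1) and (2) are verbatim the paper's Perron--Frobenius computation with $\boldsymbol{1}$ replaced by $e_r$, and your diagonal similarity $D^{-1}(\alpha^d S_\sigma)D$ in part (3) is exactly the paper's weighted substitution matrix $W_\sigma=M(d)$ in disguise, so the two arguments coincide up to whether one applies Perron--Frobenius to $W_\sigma$ directly or transports the left eigenvector of $S_\sigma$ through the conjugation.
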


\begin{rem}
	Estimates for the error terms can be found in the literature, see
	for example \cite{Yaar}.
\end{rem}

\begin{proof}
	The uniform distribution of any $r$-marking sequence $\{ x_{k}^{\left(r\right)}\} $
	of $\left\{ \delta_{k}\right\} $ follows from the same arguments
	as those described above, by replacing the vector $\boldsymbol{1}\in\mathbb{R}^{n}$
	with $e_{r}\in\mathbb{R}^{n}$, the vector $r$ of the standard
	basis of $\mathbb{R}^{n}$. 
	
	The second part of the theorem follows from the Perron-Frobenius theorem
	applied to the substitution matrix $S_\sigma$ and a direct calculation:
	\[
	\lim_{k\rightarrow\infty}\frac{|x_{k}^{\left(r\right)}|}{\left|x_{k}\right|}=\lim_{k\rightarrow\infty}\frac{\left(S_\sigma^{k}\cdot e_{r}\right)_{i}}{\left(S_\sigma^{k}\cdot\boldsymbol{1}\right)_{i}}=\frac{\left(vu^{T}\cdot e_{r}\right)_{i}}{\left(vu^{T}\cdot\boldsymbol{1}\right)_{i}}=\frac{u_{r}v_{i}}{v_{i}}=u_{r}.
	\]
	
	To prove the third part of the theorem, define the \textit{weighted substitution matrix} by 
	\[
	(W_\sigma)_{ij}=\frac{\vol\left(\alpha T_{j}\right)}{\vol T_{i}}k_{ij}.
	\]
	Similarly to Lemma \ref{lem: tiles in generations}, the matrix $W_\sigma$ satisfies
	\[
	\left(W_\sigma^{k}\right)_{ir}=\frac{\vol\left(\bigcup\left\{ \text{Tiles of type \ensuremath{r} \ensuremath{\in}\ensuremath{\ensuremath{\delta_{k}}}}\right\} \right)}{\vol T_{i}},
	\]
	and is primitive if and only if $S_\sigma$ is primitive. Observe that $\frac{\vol \left(\alpha T_{j}\right)}{\vol T_{i}}=\beta^{(k)}_{ij}$ for all $k=1,\ldots,k_{ij}$, and so $W_\sigma=M(d)$,
	where $M$ is the graph matrix function of the graph
	associated with the equivalent normalized scheme. It follows by Lemma \ref{Lemma: M(d)  has eigenvalue 1 and eigenvector vol} that
	$W_\sigma$ has Perron-Frobenius eigenvalue $\mu=1$ and a right positive
	Perron-Frobenius eigenvector $v=\boldsymbol{1}\in\mathbb{R}^{n}$.
	
	Let $w\in\mathbb{R}^{n}$ be a left positive eigenvector of $W_\sigma$ chosen
	such that $\sum w_{j}=1$, then
	\[
	\lim_{k\rightarrow\infty}\frac{\vol\left(\bigcup\left\{ \text{Tiles of type \ensuremath{r} \ensuremath{\in}\ensuremath{\ensuremath{\delta_{k}}}}\right\} \right)}{\vol T_{i}}=\lim_{k\rightarrow\infty}\left(W_\sigma^{k}\cdot e_{r}\right)_{i}=\left(\frac{vw^{T}}{w^{T}v}\cdot e_{r}\right)_{i}=\left(\frac{\boldsymbol{1}w^{T}}{w^{T}\boldsymbol{1}}\cdot e_{r}\right)_{i}=w_{r}
	\]
	finishing the proof. 
\end{proof}
\begin{example}
	\label{example:penrose statistics}Consider the Penrose-Robinson fixed
	scale substitution scheme on a tall triangle $\mathcal{T}$ and a
	short triangle $\mathcal{S}$, as defined in Figure \ref{fig: Penrose fixed scale substitution scheme}.
	The substitution matrix and the weighted substitution matrices are
	given by 
	\[
	S_\sigma=\left(\begin{array}{cc}
	2 & 1\\
	1 & 1
	\end{array}\right),\,\,\,W_\sigma=\frac{1}{\varphi^{2}}\left(\begin{array}{cc}
	2 & \varphi^{-1}\\
	\varphi & 1
	\end{array}\right),
	\]
	and so $u^{T}=\left(\frac{\varphi}{\varphi+1},\frac{1}{\varphi+1}\right)$
	and $w^{T}=\left(\frac{\varphi+1}{\varphi+2},\frac{1}{\varphi+2}\right)$,
	from which the frequencies of the generation sequence of partitions
	$\left\{ \delta_{k}\right\} $ of $\mathcal{T}$ illustrated in Figure
	\ref{fig: penrose generation sequence} are deduced, see Example \ref{Penrose substitution example}.
	
	Consider now the Kakutani sequence of partitions $\left\{ \pi_{m}\right\} $
	of $ T$ generated by the Penrose-Robinson scheme, illustrated
	in Figure \ref{fig:Kakutani sequence of penrose}. Here $\delta_{k}=\pi_{2k-1}$
	for all $k\in\mathbb{N}$, and so the number of tall and short triangles
	in $\pi_{2k-1}$ is given by the first row of $S_\sigma^{k}$. Since $\pi_{2k}$
	is the result of substituting all tall triangles in $\pi_{2k-1}$
	according to the substitution rule, the number of tall and short triangles
	in $\pi_{2k}$ is given by the first row of 
	\[
	K_{k}=S_\sigma^{k}\left(\begin{array}{cc}
	2 & 1\\
	0 & 1
	\end{array}\right).
	\]
	Combining the above we have 
	\[
	\lim\limits _{k\rightarrow\infty}\frac{\left|\left\{ \text{Short triangles}\in\pi_{2k-1}\right\} \right|}{\left|\left\{ \text{Tiles\ensuremath{\in\pi_{2k-1}}}\right\} \right|}=\lim_{k\rightarrow\infty}\frac{\left(S_\sigma^{k}\cdot e_{2}\right)_{1}}{\left(S_\sigma^{k}\cdot\boldsymbol{1}\right)_{1}}=\frac{1}{\varphi+1}
	\]
	and
	\[
	\lim\limits _{k\rightarrow\infty}\frac{\left|\left\{ \text{Short triangles }\in\pi_{2k}\right\} \right|}{\left|\left\{ \text{Tiles\,\ensuremath{\in\pi_{2k}}}\right\} \right|}=\lim_{k\rightarrow\infty}\frac{\left(K_{k}\cdot e_{2}\right)_{1}}{\left(K_{k}\cdot\boldsymbol{1}\right)_{1}}=\frac{2}{2+\varphi},
	\]
	and so the limit of the ratio between the number of short triangles
	in $\pi_{m}$ and the total number of tiles does not exist. It can
	be shown in a similar way that the sequence of volumes of the regions
	covered by short triangles in $\pi_{m}$ does not converge.
\end{example}

\section{\label{sec:Fixed-Scaled-Substitutions}Kakutani sequences generated by commensurable schemes substitution schemes}

The proof of Theorem \ref{Thm: Main result} will be given in two stages. In this section we prove the theorem for the commensurable case.

Let $\sigma$ be an irreducible commensurable multiscale
substitution scheme and let $G_\sigma$ be the graph associated
with an equivalent normalized scheme. Then $G_\sigma$ is commensurable,
and by Lemma \ref{lem: Incommensurable equivalent definitions} the
set of lengths of paths in $G_\sigma$ is uniformly discrete, that is, there
exists $\rho>0$ so that for any two paths $\gamma_{1},\gamma_{2}$
of different lengths in $G_\sigma$ 
\[
\left|l\left(\gamma_{1}\right)-l\left(\gamma_{2}\right)\right|\geq\rho.
\]
It follows that it is possible to slightly rescale the prototiles
of the equivalent normalized scheme while remaining in the same equivalent class of
schemes. By Lemma \ref{lem: Graphs and equivlence} this amounts to
slightly sliding the vertices in $G_\sigma$ along the edges, without losing
the monotonicity of the map from the lengths of paths originating at vertex $i\in\mathcal{V}_\sigma$
in the associated graph to the volumes of tiles in $\mathscr{O}^\sigma_{i}$, described
in Corollary \ref{cor: monotonicity of map between paths lengths and tile volumes}.
This proves the following Lemma \ref{lem: Commensurable scheme equiv to rational scheme}.
\begin{lem}
	\label{lem: Commensurable scheme equiv to rational scheme}Let $\sigma$ be an irreducible commensurable
	multiscale substitution scheme. Then $\sigma$ is equivalent to a scheme $\sigma'$ for which
	\begin{enumerate}
		\item Any two edges $\varepsilon_{1}$ and $\varepsilon_{2}$ in the associated
		graph $G_{\sigma'}$ satisfy 
		\[
		\frac{l\left(\varepsilon_{1}\right)}{l\left(\varepsilon_{2}\right)}\in\mathbb{Q}.
		\]
		\item The correspondence between tiles in $\mathscr{O}_i^{\sigma'}$ and finite paths in $G_{\sigma'}$ with original vertex $i\in\mathcal{V}_{\sigma'}$, is monotone in the sense of Corollary \ref{cor: monotonicity of map between paths lengths and tile volumes}, that is, if $\gamma_{T_{1}}$ and $\gamma_{T_{2}}$ are the paths in $G_{\sigma'}$ corresponding to the tiles $T_{1},T_{2}\in\mathscr{O}^{\sigma'}_{i}$, then 
		\[
		\vol T_{1}<\vol T_{2}\,\,\,\,\,\text{{\rm \emph{if\,and\,only\,if}}}\,\,\,\,\,l\left(\gamma_{T_{1}}\right)>l\left(\gamma_{T_{2}}\right).
		\]
	\end{enumerate}
\end{lem}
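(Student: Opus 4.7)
The plan is to realize the sliding described above precisely: choose explicit rescaling factors $e^{b_j}$ for each prototile of the equivalent normalized scheme so that all modified edge lengths become rational multiples of a common $L_0>0$, while keeping the $b_j$ small enough to preserve positivity of edge lengths and the monotone path/volume correspondence. The key preparatory observation, extending Lemma \ref{lem: Incommensurable equivalent definitions}, is that the set of lengths of \emph{all} paths in $G_\sigma$ (not only closed ones) is uniformly discrete. By commensurability, closed-path lengths lie in $\tfrac{L_0}{Q}\mathbb{Z}$ for some $L_0>0$ and $Q\in\mathbb{N}$. For each pair $(i,j)\in\mathcal{V}_\sigma\times\mathcal{V}_\sigma$, fix a path $\eta_{i,j}\colon j\to i$, which exists by strong connectedness. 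For any two paths $\gamma_1,\gamma_2$ starting at a common vertex $i$, the concatenations $\gamma_r\eta_{i,t(\gamma_r)}$ are closed at $i$, so $l(\gamma_1)-l(\gamma_2)$ differs from a closed-path-length difference by the constant $l(\eta_{i,t(\gamma_2)})-l(\eta_{i,t(\gamma_1)})$. Over the finitely many triples $(i,t(\gamma_1),t(\gamma_2))$, the differences $l(\gamma_1)-l(\gamma_2)$ lie in a finite union of translates of $\tfrac{L_0}{Q}\mathbb{Z}$, yielding a uniform lower bound $\rho>0$ on $|l(\gamma_1)-l(\gamma_2)|$ when nonzero.

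For the construction, fix a basepoint vertex $1$ and let $c_{1j}$ denote the common residue modulo $\mathbb{Q}L_0$ of the length of any path from $1$ to $j$; this is well defined by the closed-path argument. Let $\mathcal{C}$ be the finite set of cosets of $\mathbb{Q}L_0$ represented by $\{c_{1j}\}_{j\in\mathcal{V}_\sigma}$, and for each $C\in\mathcal{C}$ pick a representative $b_C\in -C+\mathbb{Q}L_0$ with $|b_C|<\min\{\rho/4,\,\ell_{\min}/2\}$, where $\ell_{\min}$ is the smallest edge length in $G_\sigma$; such $b_C$ exists because $-C+\mathbb{Q}L_0$ is dense in $\mathbb{R}$. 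Set $b_j=b_{[c_{1j}]}$, so $b_j$ depends only on the coset class of $c_{1j}$, and define $\sigma'$ by replacing each prototile $T_j$ of the normalized equivalent of $\sigma$ by $e^{b_j}T_j$.

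For property (1), by Lemma \ref{lem: Graphs and equivlence} the new length of an edge $e$ from $k$ to $j$ is $l_e+b_j-b_k$; since any $1\to k$ path extended by $e$ is a $1\to j$ path, $l_e\in c_{1j}-c_{1k}+\mathbb{Q}L_0$, and combined with $b_j-b_k\in -c_{1j}+c_{1k}+\mathbb{Q}L_0$ the sum lies in $\mathbb{Q}L_0$; positivity follows from $|b_j|<\ell_{\min}/2$, and pairwise ratios of nonzero elements of $\mathbb{Q}L_0$ are rational. For property (2), a direct computation using Lemma \ref{lemma: length of path and volume of tile} and $l'(\gamma)=l(\gamma)+b_{t(\gamma)}-b_i$ shows that a tile of $\mathscr{O}_i^{\sigma'}$ corresponding to $\gamma$ has volume $e^{-l(\gamma)d+b_id}$, depending only on the \emph{original} length $l(\gamma)$. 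When $l(\gamma_1)\neq l(\gamma_2)$, the uniform gap $|l(\gamma_1)-l(\gamma_2)|\geq\rho$ together with $|b_{t(\gamma_1)}-b_{t(\gamma_2)}|<\rho/2$ forces $l$ and $l'$ to order $\gamma_1,\gamma_2$ identically; when $l(\gamma_1)=l(\gamma_2)$ (and hence $\vol T_1=\vol T_2$), the residues of the endpoints coincide modulo $\mathbb{Q}L_0$, and via $c_{1j}\equiv c_{1i}+c_{ij}\pmod{\mathbb{Q}L_0}$ so do $c_{1,t(\gamma_1)}$ and $c_{1,t(\gamma_2)}$, giving $b_{t(\gamma_1)}=b_{t(\gamma_2)}$ and hence $l'(\gamma_1)=l'(\gamma_2)$. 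This establishes the required monotonicity iff. The principal obstacle is the uniform discreteness extension of the first step; once that is in hand, the rest is a direct unpacking of the stated lemmas.
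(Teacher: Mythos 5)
Your proof is correct and follows essentially the same route as the paper, which only sketches this argument: normalize, use commensurability to get uniform discreteness of path lengths, and then slightly rescale the prototiles (equivalently, slide vertices along edges) to make all edge lengths rationally dependent while the perturbation is too small to disturb the monotone correspondence of Corollary \ref{cor: monotonicity of map between paths lengths and tile volumes}. You in fact supply two details the paper leaves implicit --- the extension of uniform discreteness from closed paths to all paths via concatenation with fixed return paths, and the explicit coset-based choice of the rescaling factors $b_j$ --- both of which check out.
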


\begin{thm}
	\label{thm: commensurable can be covered by fixed scale}Let $\sigma$ be an irreducible commensurable
	multiscale substitution scheme and let $\left\{ \pi_{m}\right\} $
	be a Kakutani sequence of partitions of $T_i\in\tau_\sigma$. Then there exist a set
	of prototiles $\tau_{\tilde{\sigma}}$ that contains $\tau_\sigma$ as a subset, and a fixed scale substitution scheme $\tilde{\sigma}$
	with prototile set $\tau_{\tilde{\sigma}}$, so that $\left\{ \pi_{m}\right\} $ is a subsequence
	of a generation sequence of partitions $\{\delta_k\}$ of $T_i\in\tau_{\tilde{\sigma}}$,
	generated by the fixed scale scheme $\tilde{\sigma}$.
\end{thm}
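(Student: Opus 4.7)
The plan is to produce $\tilde\sigma$ by refining the graph $G_\sigma$ so that every edge carries a common length, using Lemma \ref{lem: Commensurable scheme equiv to rational scheme} to make this possible, and then identifying the Kakutani partitions with specific generation partitions of the refined scheme via the path correspondence in Lemma \ref{lem: tiles =00003D paths}.

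First, I would apply Lemma \ref{lem: Commensurable scheme equiv to rational scheme} to replace $\sigma$ by an equivalent scheme (still denoted $\sigma$) whose associated graph $G_\sigma$ has all edge lengths equal to positive integer multiples of a common $\rho>0$; this preserves the set of lengths of closed paths, the monotone correspondence of Corollary \ref{cor: monotonicity of map between paths lengths and tile volumes}, and hence the Kakutani sequence $\{\pi_m\}$. The contraction constant of $\tilde\sigma$ will be $\tilde\alpha=e^{-\rho}$.

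Second, I would construct $\tilde\sigma$ by metrically subdividing each edge of $G_\sigma$ into segments of length $\rho$: for every edge $\varepsilon$ from $h$ to $j$ of length $n_\varepsilon\rho$ representing a substitution tile $\alpha_\varepsilon T_j\in\omega_\sigma(T_h)$, introduce $n_\varepsilon-1$ auxiliary labeled prototiles $T^{(\varepsilon,1)},\ldots,T^{(\varepsilon,n_\varepsilon-1)}$ representing the intermediate positions reached after successive advances of length $\rho$ along $\varepsilon$; set $\tau_{\tilde\sigma}=\tau_\sigma\cup\{T^{(\varepsilon,k)}\}_{\varepsilon,k}$, so that $\tau_\sigma\subset\tau_{\tilde\sigma}$ as required. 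Define $\omega_{\tilde\sigma}$ as follows: for each $T_h\in\tau_\sigma$ let $\omega_{\tilde\sigma}(T_h)$ contain one piece $\tilde\alpha\cdot T^{(\varepsilon,1)}$ per outgoing edge $\varepsilon$ (with the convention $T^{(\varepsilon,0)}:=T_h$ and $T^{(\varepsilon,n_\varepsilon)}:=T_j$), placed in $T_h$ at the same geometric locations as the original tiles of $\omega_\sigma(T_h)$; for each intermediate prototile $T^{(\varepsilon,k)}$ with $k<n_\varepsilon-1$, let $\omega_{\tilde\sigma}(T^{(\varepsilon,k)})=\{\tilde\alpha\cdot T^{(\varepsilon,k+1)}\}$, and for $k=n_\varepsilon-1$ let $\omega_{\tilde\sigma}(T^{(\varepsilon,n_\varepsilon-1)})=\{\tilde\alpha\cdot T_j\}$. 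The associated graph $G_{\tilde\sigma}$ is then precisely the metric subdivision of $G_\sigma$ into edges of common length $\rho$, and $\tilde\sigma$ is fixed scale.

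Third, I would match sequences through the path correspondence. The subdivision gives a length-preserving bijection between metric paths originating at $i$ in $G_\sigma$ and those in $G_{\tilde\sigma}$. By Lemma \ref{lem: tiles =00003D paths}, tiles in $\pi_m$ correspond to metric paths of length $l_m$ in $G_\sigma$; under the bijection these become paths of combinatorial length $k_m:=l_m/\rho$ in $G_{\tilde\sigma}$ terminating at vertices, which by the analogous correspondence for the fixed-scale scheme $\tilde\sigma$ are exactly the tiles of the generation partition $\delta_{k_m}$. Since the geometric placement of intermediate tiles coincides with that of the corresponding Kakutani tiles, this yields $\pi_m=\delta_{k_m}$ as partitions of $T_i$, so $\{\pi_m\}$ is the subsequence $\{\delta_{k_m}\}$ of $\{\delta_k\}$.

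I expect the principal difficulty to be ensuring that the intermediate substitutions satisfy the strict inequality $0<\beta_{ij}^{(k)}<1$ of Definition \ref{def: multicale substitution scheme}: the naive identification of $T^{(\varepsilon,k)}$ with $\tilde\alpha\cdot T^{(\varepsilon,k+1)}$ as sets forces $\beta=1$ at each intermediate step, so some care is required when specifying the volumes and geometric realizations of the auxiliary prototiles so that each substitution is a genuine contraction while still producing $\pi_m$ as $\delta_{k_m}$. The remaining verifications, namely that $\omega_{\tilde\sigma}(T_h)$ tiles $T_h$ and that the combinatorics of paths in $G_{\tilde\sigma}$ faithfully track the Kakutani dynamics under subdivision, should follow routinely from the construction once this point is settled.
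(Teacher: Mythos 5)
Your proposal follows essentially the same route as the paper: pass to the rationalized equivalent scheme via Lemma \ref{lem: Commensurable scheme equiv to rational scheme}, subdivide each edge of the associated graph into segments of a common length by inserting auxiliary vertices and prototiles (which the paper realizes concretely as rescaled copies of the terminal prototile $T_j$, with geometrically trivial substitutions $\omega_{\tilde\sigma}(T_{v_s})=\tilde\alpha\,T_{v_{s+1}}$), and then identify $\pi_m$ with $\delta_{k_m}$ through the metric-path correspondence of Lemma \ref{lem: tiles =00003D paths}. The difficulty you flag --- that the trivial intermediate substitutions force a constant of substitution $\beta=1$ --- is present in the paper's construction as well and is harmless here, since the generation-sequence arguments (Perron--Frobenius applied to $S_{\tilde\sigma}$, and the shrinking of tile diameters) only use the fixed scaling constant $\tilde\alpha<1$, not the strict inequality $\beta<1$.
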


\begin{proof}
	The stages of the process described in the proof are illustrated in
	Example \ref{example: rhombus and triangle} below. 
	
	Put $\sigma_1=\sigma$, and let $G_{\sigma_1}$ be the associated graph. By Lemma
	\ref{lem: tiles =00003D paths}, there is a one-to-one correspondence
	between tiles in the partition $\pi_{m}$ and metric paths of length
	$l_{m}$ in $G_{\sigma_2}$, which is the graph associated with the equivalent
	normalized scheme $\sigma_2$, and has the same set of closed path lengths as
	$G_{\sigma_1}$. By Lemma \ref{lem: Commensurable scheme equiv to rational scheme},
	the normalized scheme is equivalent to a scheme $\sigma_3$ with an associated
	graph $G_{\sigma_3}$ with rationally dependent edge lengths and the same
	set of closed path lengths as $G_{\sigma_1}$ and $G_{\sigma_2}$, and for which
	the statement of Lemma \ref{lem: tiles =00003D paths} still holds.
	Let $\tau_{\sigma_3}=\left(T_{1},\ldots, T_{n}\right) $
	be the list of prototiles on which the ``rationalized'' scheme $\sigma_3$ is defined.
	
	Assume there is an edge of length $\log\frac{1}{\alpha}$ in $G_{\sigma_3}$,
	and let $a\in\mathbb{N}$ be the minimal positive integer such that
	for every $\varepsilon\in G_{\sigma_3}$ there exists $b_{\varepsilon}\in\mathbb{N}$
	coprime to $a$ with 
	\[
	l\left(\varepsilon\right)=\frac{b_{\varepsilon}}{a}\log\frac{1}{\alpha}.
	\]
	It follows that there exists an increasing sequence $\left\{ k_{m}\right\} \subset\mathbb{N}$
	such that $l_{m}=\frac{1}{a}\log\frac{1}{\alpha}\cdot k_{m}$. 
	
	By appropriately adding vertices to $G_{\sigma_3}$, define a new graph $G_{\sigma_4}$
	with edges all of length $\frac{1}{a}\log\alpha$. The graph $G_{\sigma_4}$
	is associated with a fixed scale substitution scheme $\sigma_4$ with contracting
	constant $\alpha^{\frac{1}{a}}$ defined on a prototile set $\tau_{\sigma_4}$,
	that contains rescaled copies of elements of $\tau_\sigma$ and new
	prototiles associated with the new vertices. 
	
	Let $\varepsilon$ be an edge in $G_{\sigma_3}$ with initial and terminal
	vertices $i$ and $j$, and let $v_{1},\ldots,v_{t}$ be the new vertices
	in $G_{\sigma_4}$ added on $\varepsilon$, then they each have a single
	outgoing edge in $G_{\sigma_4}$. It follows that the substitution tiles of
	the associated new prototiles in $\tau_{\sigma_4}$ are geometrically
	trivial, that is $\omega_{\sigma_4}(T_{v_{s}})=\alpha^{\frac{1}{a}} T_{v_{s+1}}$
	and $\omega_{\sigma_4}(T_{v_{t}})=\alpha^{\frac{1}{a}} T_{j}$, and
	so they are all rescaled copies of $T_{j}$. The rescaled
	copy of $T_{j}$ in the substitution rule on $T_{i}$
	that corresponds to the edge $\varepsilon$ in the original scheme,
	is replaced in the fixed scale scheme by a copy of $\alpha^{\frac{1}{a}} T_{v_{1}}$. 
	
	The partition $\delta_{k}$ in the generation sequence generated by the
	fixed scale scheme $\sigma_4$ on $\tau_{\sigma_4}$ defined above corresponds to
	paths in $G_{\sigma_4}$ of length $\frac{1}{a}\log\alpha\cdot k$, which
	can be regarded as metric paths of the same length in $G_{\sigma_3}$. Since
	$\pi_{m}$ corresponds to metric paths of length $l_{m}=\frac{1}{a}\log\frac{1}{\alpha}\cdot k_{m}$,
	by the construction above $\pi_{m}$ is geometrically identical to $\delta_{k_{m}}$. Putting $\tilde{\sigma}=\sigma_4$ finishes the proof.
\end{proof}
\begin{proof}[Proof of Theorem \ref{Thm: Main result} for commensurable schemes]
	By Theorem \ref{thm: commensurable can be covered by fixed scale}
	a Kakutani sequence of partitions generated by a commensurable scheme
	is a subsequence of a generation sequence of partitions generated
	by a fixed scale scheme, which by Theorem \ref{Thm: main result for generation sequences}
	is uniformly distributed, finishing the proof.
\end{proof}
\begin{example}
	\label{example: rhombus and triangle}Consider the fixed scale substitution
	scheme on the triangle $\mathcal{T}$ and the rhombus $\mathcal{R}$
	in $\mathbb{R}^{2}$, as defined in Figure \ref{fig:TR fixed scaled}
	where it is shown together with its associated graph $G_{\sigma_1}$.
	\begin{figure}[H]
		\includegraphics[scale=1]{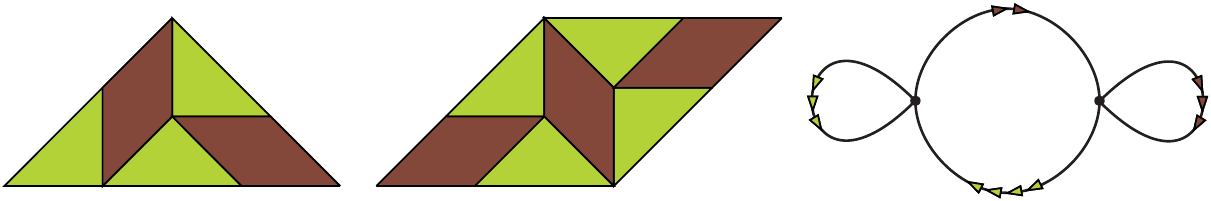}\caption{\label{fig:TR fixed scaled}A fixed scale substitution scheme $\sigma_1$ on $\mathcal{T}$
			and $\mathcal{R}$ and its associated graph $G_{\sigma_1}$. The vertex on
			the left corresponds to $\mathcal{T}$ and the vertex on the right
			corresponds to $\mathcal{R}$. Multiple arrows stand for multiple
			edges of the same length.}
	\end{figure}
	
	This substitution scheme is studied in Chapter 6 of \cite{BaakeGrimm}.
	The substitution matrix of this scheme is given by 
	\[
	S_\sigma=\left(\begin{array}{cc}
	3 & 2\\
	4 & 3
	\end{array}\right).
	\]
	The matrix $S$ has Perron-Frobenius eigenvalue $\mu=\left(1+\sqrt{2}\right)^{2}$
	and $v=\left(1,\sqrt{2}\right)$ is a right Perron-Frobenius eigenvector. The contraction constant of the scheme is thus $\alpha=\frac{1}{1+\sqrt{2}}$,
	the volumes of the prototiles are $\vol \mathcal{T}=1$ and $\vol\mathcal{R}=\sqrt{2}$
	and the edges of the associated graph $G_{\sigma_1}$ are all of length $\log\frac{1}{\alpha}$. 
	
	An equivalent normalized scheme, in which $\vol \mathcal{T}=\vol\mathcal{R}=1$,
	is illustrated next to its associated graph $G_{\sigma_2}$ in Figure \ref{fig: TR normalized}.
	Note that the contraction of the rhombus $\mathcal{R}$ corresponds
	to the sliding of its associated vertex backwards along the edges,
	as described in Lemma \ref{lem: Graphs and equivlence}.
	\begin{figure}[H]
		\includegraphics[scale=1]{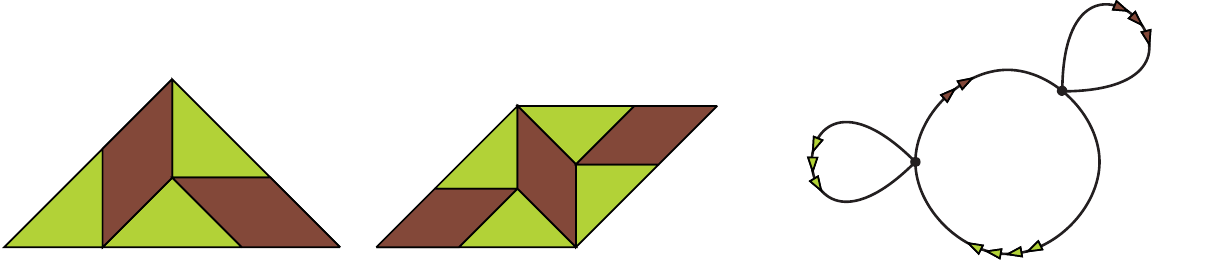}\caption{\label{fig: TR normalized}By contracting the rhombus $\mathcal{R}$
			we obtain an equivalent normalized scheme $\sigma_2$ with an associated graph
			$G_{\sigma_2}$. }
	\end{figure}
	
	By contracting the rhombus $\mathcal{R}$ some more, and continuing
	to slide the associated vertex along the edges accordingly, we obtain
	an equivalent scheme $\sigma_3$ on a set of prototiles $\tau_{\sigma_3}$ with an
	associated graph $G_{\sigma_3}$, which has the properties described in Lemma
	\ref{lem: Commensurable scheme equiv to rational scheme}. This is
	illustrated in Figure \ref{fig:TR rationalized}. 
	\begin{figure}[H]
		\includegraphics[scale=1]{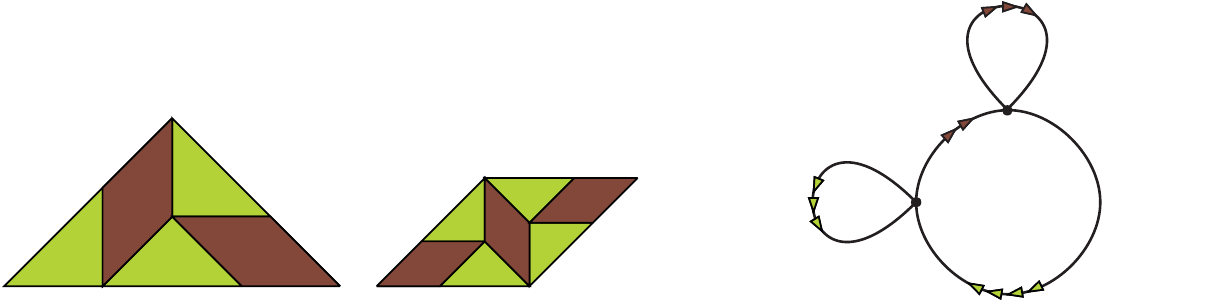}\caption{\label{fig:TR rationalized}An equivalent ``rationalized'' scheme
			$\sigma_3$ on a set of prototiles $\tau_{\sigma_3}$ and the associated graph $G_{\sigma_3}$.}
	\end{figure}
	
	The lengths of the edges in the graph $G_{\sigma_3}$ are all integer multiples
	of $\frac{1}{2}\log\alpha$. By adding vertices to the graph we define
	a new graph $G_{\sigma_4}$ in which all edges are of equal length $\frac{1}{2}\log\alpha$.
	Note that in this example there are $k\geq2$ edges of the same length
	and initial and terminal vertices and so we can choose one of these
	edges, define new vertices as described in the proof of Theorem \ref{thm: commensurable can be covered by fixed scale},
	and define $k$ new edges between the initial vertex and the first
	new one. This is illustrated in Figure \ref{fig: TR rational graph},
	and may be useful for computations.
	
	\begin{figure}[H]
		\includegraphics[scale=1]{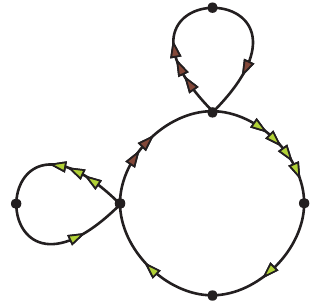}\caption{\label{fig: TR rational graph}The graph $G_{4}$ associated with
			a fixed scale substitution scheme with generation sequences of partitions
			geometrically identical to the Kakutani sequences of partitions of
			the original fixed scale substitution scheme on $\mathcal{T}$ and
			$\mathcal{R}$. }
	\end{figure}
	
	The procedure by which the graph $G_{\sigma_4}$ is derived from the graph
	$G_{\sigma_3}$ corresponds to the addition of four rescaled copies of the
	original prototiles to the set of prototiles $\tau_{\sigma_3}$ illustrated
	in Figure \ref{fig:TR rationalized}. The graph $G_{\sigma_4}$ is the graph
	associated with the fixed scale substitution scheme $\sigma_4$  defined on the extended
	set of prototiles $\tau_{\sigma_4}$, with substitution rule as shown
	in Figure \ref{fig: TR prototiles with additional tiles} and contraction
	constant $\sqrt{\alpha}$. The generation sequence generated by this
	scheme on $\tau_{\sigma_4}$ is geometrically identical to the Kakutani
	sequence generated by the original scheme. 
	\begin{figure}[H]
		\includegraphics[scale=1]{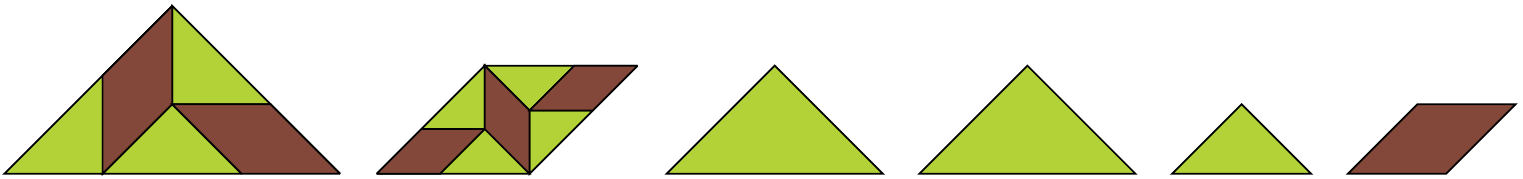}\caption{\label{fig: TR prototiles with additional tiles}The fixed scale substitution
			scheme $\sigma_4$ with contraction constant $\sqrt{\alpha}$, and the set of prototiles $\tau_{\sigma_4}=\left( \mathcal{T},\sqrt{\alpha}\mathcal{R},\sqrt{\alpha} \mathcal{T},\sqrt{\alpha} \mathcal{T},\alpha \mathcal{T},\alpha\mathcal{R}\right)$ it is defined on.}
	\end{figure}
	
\end{example}

\section{\label{sec:Kakutani incommensurable}Kakutani sequences generated	by incommensurable schemes }

In this section we prove Theorem \ref{Thm: Main result} on uniform distribution of Kakutani sequences of partitions generated by incommensurable multiscale
substitution schemes, and present a proof for Theorem \ref{thm:Types, u.d and frequencies}. 

\subsection{Counting metric paths in incommensurable graphs}

In the previous sections, we have seen how the Perron-Frobenius theorem is used to imply uniform distribution of sequences of partitions generated by commensurable schemes. These are no longer sufficient in the incommensurable case, and a new tool for counting tiles is required. As we will see below, the role of the Perron-Frobenius theorem is taken by the path counting results on incommensurable graphs that appear as Theorems $1$ and $2$ in \cite{Graphs}. 

\begin{thm*}[Theorem $1$ in \cite{Graphs}]
	\label{Counting paths main result}Let $G$ be a strongly connected
	incommensurable graph with a set of vertices $\mathcal{V}=\left\{ 1,\ldots,n\right\} $.
	There exist a positive constant $\lambda$ and a matrix $Q^{(M)}\in M_{n}(\mathbb{R})$
	with positive entries, so that if $\varepsilon\in\mathcal{E}$ is
	an edge in $G$ with initial vertex  $h\in\mathcal{V}$, then the
	number of metric paths of length exactly $x$ from vertex $i\in\mathcal{V}$
	to a point on the edge $\varepsilon$ grows as 
	\[
	\frac{1-e^{-l\left(\varepsilon\right)\lambda}}{\lambda}Q_{ih}^{(M)}e^{\lambda x}+o\left(e^{\lambda x}\right),\quad x\rightarrow\infty.
	\]
	The constant $\lambda$ is the maximal real value for which the spectral
	radius of $M(\lambda)$ is equal to $1$, and 
	\[
	Q^{(M)}=\frac{\adj\left(I-M(\lambda)\right)}{-\tr\left(\adj\left(I-M(\lambda)\right)\cdot M^{\prime}(\lambda)\right)},
	\]
	where $M$ is the graph matrix function defined in Section \ref{sec:Graphs},
	$M^{\prime}$ is the entry-wise derivative of $M$, and $\adj A$
	is the adjugate or classical adjoint matrix of $A$, that is, the transpose
	of its cofactor matrix.
\end{thm*}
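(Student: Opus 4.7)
The plan is to encode the counting of paths by a matrix-valued Laplace transform, identify $\lambda$ as the abscissa of convergence, and extract the leading exponential term by a Tauberian argument whose hypotheses are verified using the incommensurability condition. Concretely, for vertices $i,h\in\mathcal{V}$, let $a_{ih}$ denote the counting measure on $[0,\infty)$ supported on lengths of paths from $i$ to $h$, and consider the matrix generating function
\[
A(s)_{ih}=\int_0^\infty e^{-sx}\,da_{ih}(x)=\sum_{\gamma:i\to h}e^{-s\,l(\gamma)}.
\]
Decomposing each non-trivial path by its first edge gives the functional equation $A(s)=I+M(s)A(s)$, so that $A(s)=(I-M(s))^{-1}$ wherever the spectral radius of $M(s)$ is strictly less than $1$. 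Since $M(s)$ has non-negative real entries that decrease continuously to $0$ as $s\to+\infty$, its Perron eigenvalue $\rho(s)$ is strictly decreasing on $\mathbb{R}$, and a unique $\lambda\in\mathbb{R}$ satisfies $\rho(\lambda)=1$; this is the $\lambda$ of the statement.

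I would next analyse the pole structure of $A(s)$ at $s=\lambda$. Since the Perron eigenvalue $1$ of $M(\lambda)$ is simple, $\det(I-M(s))$ has a simple zero at $\lambda$; using Jacobi's formula $\frac{d}{ds}\det(I-M(s))=-\tr(\adj(I-M(s))\cdot M'(s))$ together with $(I-M(s))^{-1}=\adj(I-M(s))/\det(I-M(s))$, the residue of $A(s)$ at $s=\lambda$ is precisely the matrix $Q^{(M)}$ from the statement. Positivity of its entries then follows from the fact that, by strong connectivity, $\adj(I-M(\lambda))$ is a rank one positive matrix proportional to $vu^T$, where $v,u$ are the right and left Perron eigenvectors of $M(\lambda)$. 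The crucial step is to show that $\lambda$ is the unique pole of $A(s)$ on the critical line $\{\text{Re}(s)=\lambda\}$. For any $t\neq 0$ the matrix $M(\lambda+it)$ has entries $e^{-it\,l(\varepsilon)}M_{ij}(\lambda)$, so a Wielandt-type bound gives $\rho(M(\lambda+it))\le\rho(M(\lambda))=1$, with equality only if the phases $e^{-it\,l(\gamma)}$ along every closed path $\gamma$ are simultaneously trivial up to a common rotation. Under the incommensurability hypothesis no single $t\neq 0$ can align all closed-path phases, so $\rho(M(\lambda+it))<1$ and $I-M(\lambda+it)$ is invertible for $t\neq 0$.

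Having a single simple pole on the critical line, I would now apply a Wiener--Ikehara Tauberian theorem (in its form for monotone step functions with meromorphic Laplace--Stieltjes transforms) to the cumulative counting function $N_{ih}(x)=a_{ih}([0,x])$, obtaining
\[
N_{ih}(x)=\frac{Q^{(M)}_{ih}}{\lambda}\,e^{\lambda x}+o(e^{\lambda x}),\qquad x\to\infty.
\]
Finally, metric paths of length exactly $x$ from $i$ to a point on the edge $\varepsilon$ (with initial vertex $h$) are in bijection with ordinary paths $i\to h$ whose length lies in the interval $[x-l(\varepsilon),x]$, so the count equals $N_{ih}(x)-N_{ih}(x-l(\varepsilon))$; substituting the asymptotic above yields the stated leading term $\tfrac{1-e^{-l(\varepsilon)\lambda}}{\lambda}Q^{(M)}_{ih}e^{\lambda x}+o(e^{\lambda x})$. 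The main obstacle is the spectral step on the critical line: extracting from the purely arithmetic incommensurability condition the analytic statement $\rho(M(\lambda+it))<1$ for every $t\neq 0$. This requires a careful Perron--Frobenius analysis of phase alignment in an irreducible non-negative matrix, and it is the exact point where the hypothesis of the theorem is exchanged for the Tauberian input; the remaining steps are then essentially mechanical residue calculus and elementary bookkeeping at the edge $\varepsilon$.
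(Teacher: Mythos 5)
This statement is Theorem 1 of \cite{Graphs}, which the paper quotes without proof, indicating only that the argument proceeds via the Laplace transform of the counting function and the Wiener--Ikehara Tauberian theorem; your sketch (first-edge decomposition giving $A(s)=(I-M(s))^{-1}$, residue calculus at the simple zero of $\det(I-M(s))$ at $s=\lambda$, Wielandt plus incommensurability to exclude other singularities on $\mathrm{Re}(s)=\lambda$, then Wiener--Ikehara and the window $N_{ih}(x)-N_{ih}(x-l(\varepsilon))$) is exactly that route and is correct. So the proposal matches the cited proof's approach; no gaps worth flagging beyond the critical-line step you already identify as the technical core.
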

The proof involves the analysis of the Laplace transform of the corresponding
counting function and the application of the Wiener-Ikehara Tauberian
theorem. For a complete proof see \cite{Graphs}, where further details
regarding walks on directed weighted graphs may be found.

Although for general graphs the value of the constant $\lambda$ 
and therefore also of the entries of the matrix $Q^{(M)}$ appearing in Theorem $1$ in
\cite{Graphs} may be difficult to compute, they can be given explicitly
in the case of graphs associated with multiscale substitution schemes, as follows from Lemma \ref{lem: lambda and Q for graphs associated with schemes}
\begin{lem}\label{lem: lambda and Q for graphs associated with schemes}
	Let $\sigma$ be an irreducible incommensurable substitution scheme in $\mathbb{R}^{d}$, and let $G_\sigma$ be the associated graph. Then $\lambda=d$, and the columns of $Q^{(M)}$ are spanned by $v_{\vol}$, that is, there exist $q_{1},\ldots,q_{n}>0$
	such that $q_{h}\cdot v_{\text{vol}}$ is column $h$ of the matrix
	$Q^{(M)}$.
\end{lem}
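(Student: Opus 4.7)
The plan is to establish the two claims separately, both as direct consequences of Corollary \ref{cor: The Perron-Frobenius eigenvalue and eigenvector of M(d)} combined with standard linear algebraic properties of the adjugate.

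First I would handle the identification $\lambda=d$. By Corollary \ref{cor: The Perron-Frobenius eigenvalue and eigenvector of M(d)}, applied to the irreducible scheme $\sigma$, the Perron-Frobenius eigenvalue of $M(d)$ equals $1$; in particular, the spectral radius $\rho(M(d))$ equals $1$. To show this is the \emph{maximal} real value of $s$ at which $\rho(M(s))=1$, I would argue that $\rho(M(s))$ is strictly decreasing in $s\in\mathbb{R}$. Each nonzero entry $M_{ij}(s)=\sum_k e^{-s\cdot l(\varepsilon_k)}$ is a strictly decreasing function of $s$ because all edge lengths $l(\varepsilon_k)$ are positive (the associated graph is of a normalized scheme, whose constants of substitution satisfy $0<\beta<1$). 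By the standard monotonicity of the Perron-Frobenius eigenvalue under entry-wise strict decrease on an irreducible nonnegative matrix, $\rho(M(s))$ is strictly decreasing, so $d$ is in fact the unique real value with $\rho(M(\lambda))=1$ and is therefore trivially the maximal one.

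Next I would prove the statement about $Q^{(M)}$. By Perron-Frobenius applied to the irreducible nonnegative matrix $M(d)$, the eigenvalue $1$ is \emph{simple}, and $v_{\vol}$ spans its one-dimensional right eigenspace. Hence $I-M(d)$ has rank exactly $n-1$ with null space $\mathbb{R}\cdot v_{\vol}$. Now I invoke the standard fact that for any $A\in M_n(\mathbb{R})$ the identity $A\cdot\adj(A)=\det(A)\cdot I$ holds; applied to $A=I-M(d)$, whose determinant vanishes, this gives $(I-M(d))\cdot\adj(I-M(d))=0$, so every column of $\adj(I-M(d))$ lies in the null space of $I-M(d)$, i.e.\ is a scalar multiple of $v_{\vol}$. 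Since $I-M(d)$ has rank $n-1$, $\adj(I-M(d))$ has rank exactly $1$, so none of its columns is the zero vector (otherwise the rank would drop further using simplicity of the eigenvalue, which guarantees that some $(n-1)\times(n-1)$ minor is nonzero). Thus there exist nonzero real scalars $\tilde q_h$ with $h$-th column equal to $\tilde q_h\cdot v_{\vol}$, and dividing through by the common scalar $-\tr(\adj(I-M(d))\cdot M'(d))$ yields column $h$ of $Q^{(M)}$ equal to $q_h\cdot v_{\vol}$ for some nonzero $q_h$.

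Finally, the positivity $q_h>0$ is not something I have to prove from scratch: Theorem 1 of \cite{Graphs}, which I am allowed to invoke, already asserts that $Q^{(M)}$ has strictly positive entries. Combined with the positivity of every component of $v_{\vol}$, this forces each $q_h$ to be positive. The only part requiring any real care is the step showing that the $q_h$ are nonzero before normalization, and this is handled cleanly by rank considerations; the monotonicity of $\rho(M(s))$, while intuitively clear, is the one place where I would want to cite the appropriate Perron-Frobenius monotonicity statement explicitly rather than reprove it.
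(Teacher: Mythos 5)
Your proof is correct and follows essentially the same route as the paper: both parts rest on Corollary \ref{cor: The Perron-Frobenius eigenvalue and eigenvector of M(d)}. The two small divergences are these. For $\lambda=d$, the paper bounds the Perron--Frobenius eigenvalue of $M(x)$ for $x>d$ by its row sums (which are $<1$ by Lemma \ref{lem: constants and volumes}), citing \cite{Gantmacher}, whereas you invoke strict monotonicity of the spectral radius under entrywise decrease of an irreducible nonnegative matrix; both are standard Perron--Frobenius facts and both work, yours giving the slightly stronger conclusion that $d$ is the \emph{unique} real solution of $\rho(M(s))=1$. For the column structure of $Q^{(M)}$, the paper simply cites the fact, proved in \cite{Graphs}, that the columns are positive multiples of a right Perron--Frobenius eigenvector of $M(d)$, while you rederive it from $(I-M(d))\,\adj(I-M(d))=0$ and the simplicity of the eigenvalue $1$; this is a legitimate, more self-contained derivation. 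One step of yours is incorrectly justified, though ultimately harmless: a rank-one matrix can perfectly well have zero columns (e.g.\ $\bigl(\begin{smallmatrix}1&0\\1&0\end{smallmatrix}\bigr)$), so ``otherwise the rank would drop further'' does not rule out $\tilde q_h=0$ for some $h$. You do not need that argument, since you already appeal to Theorem $1$ of \cite{Graphs} for the strict positivity of the entries of $Q^{(M)}$, which together with the positivity of $v_{\vol}$ forces every $q_h>0$ and in particular $q_h\neq 0$; I would simply delete the rank parenthetical and let the positivity assertion of Theorem $1$ carry that weight, exactly as the paper does.
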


\begin{proof}
	By Corollary \ref{cor: The Perron-Frobenius eigenvalue and eigenvector of M(d)},
	the spectral radius of $M(d)$ is exactly $1$. For any
	$x>d$ the sums of all rows of the matrix $M(x)$ are strictly
	smaller than $1$, and so the Perron-Frobenius eigenvalue of $M(x)$
	is strictly smaller than $1$ (see page 63 of \cite{Gantmacher}). It
	follows that the spectral radius of $M(x)$ is strictly
	smaller than $1$ for all $x>d$, and so $\lambda=d$. 
	
	As proved in \cite{Graphs}, the columns of $Q^{(M)}$
	are positive multiples of a right Perron-Frobenius eigenvector of
	$M(d)$. By Corollary \ref{cor: The Perron-Frobenius eigenvalue and eigenvector of M(d)}
	this vector can be chosen to be $v_{\vol}$.
\end{proof}

Given an irreducible incommensurable scheme $\sigma$ in $\R^d$, consider the graph associated with an equivalent normalized scheme, and let $M$ be the corresponding graph matrix function. We denote $M_\sigma=M(d)$, $M'_\sigma=M'(d)$ and $Q_\sigma=Q^{(M)}$.

\begin{cor}
	\label{cor: counting paths in graph associated with scheme}Let $G_\sigma$
	be a graph associated with a normalized irreducible incommensurable
	multiscale substitution scheme in $\mathbb{R}^{d}$. Let $\varepsilon\in\mathcal{E}_\sigma$
	be an edge associated with the tile $\alpha T_{j}\in\omega_\sigma(T_h)$,
	that is $\varepsilon$ has initial vertex $h$, terminal edge $j$
	and is of length $\log\frac{1}{\alpha}$. Then
	the number of metric paths of length exactly $x$ from vertex $i$
	to a point on the edge $\varepsilon$ grows as
	\[
	\frac{1-\beta^{d}}{d}q_{h}e^{dx}+o\left(e^{dx}\right),\quad x\rightarrow\infty,
	\]
	where $\beta$ is the constant of substitution associated with $\alpha$, and $q_{h}\cdot v_{\vol}=q_{h}\cdot\boldsymbol{1}$ is column $h$
	of the matrix $Q_\sigma$.
\end{cor}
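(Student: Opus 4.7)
The plan is to obtain this as a direct specialization of Theorem $1$ of \cite{Graphs} applied to the graph $G_\sigma$, using the identifications provided by Lemma \ref{lem: lambda and Q for graphs associated with schemes} together with the fact that the scheme is normalized.

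First, I would apply Theorem $1$ of \cite{Graphs} verbatim to the strongly connected incommensurable graph $G_\sigma$ (strong connectedness follows from irreducibility, and incommensurability of the scheme is assumed). This yields that the number of metric paths of length exactly $x$ from vertex $i$ to a point on the edge $\varepsilon$, whose initial vertex is $h$, grows as
\[
\frac{1-e^{-l(\varepsilon)\lambda}}{\lambda}\,Q^{(M)}_{ih}\,e^{\lambda x}+o(e^{\lambda x}),\qquad x\to\infty,
\]
where $\lambda$ and $Q^{(M)}$ are the constant and matrix attached to the graph matrix function $M$, and by definition $Q_\sigma=Q^{(M)}$ in our setup.

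Next I would invoke Lemma \ref{lem: lambda and Q for graphs associated with schemes} to replace $\lambda$ by $d$ and to observe that the columns of $Q_\sigma$ are spanned by $v_{\vol}$. Since the scheme is normalized, $\vol T_k=1$ for every $k$, hence $v_{\vol}=\boldsymbol{1}$, and therefore $Q^{(M)}_{ih}=q_h\cdot(v_{\vol})_i=q_h$ independently of $i$. This accounts for the scalar $q_h$ appearing in the statement.

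It remains to rewrite the edge-length factor. The edge $\varepsilon$ is associated with the substitution tile $\alpha T_j\in\omega_\sigma(T_h)$, so by the definition of $G_\sigma$ we have $l(\varepsilon)=\log\tfrac{1}{\alpha}$, and consequently $e^{-l(\varepsilon)\lambda}=\alpha^d$. Because the scheme is normalized, Lemma \ref{lem: constants and volumes} combined with Equation \eqref{eq: constants of substitution} gives $\alpha=\beta$, so $e^{-l(\varepsilon)\lambda}=\beta^d$. Substituting these identifications into the asymptotic above yields
\[
\frac{1-\beta^d}{d}\,q_h\,e^{dx}+o(e^{dx}),\qquad x\to\infty,
\]
as required. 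There is no genuine obstacle here; the work has been done in \cite{Graphs} and in Lemma \ref{lem: lambda and Q for graphs associated with schemes}, and what remains is only the bookkeeping to convert the abstract graph-theoretic quantities $\lambda$, $Q^{(M)}_{ih}$ and $l(\varepsilon)$ into the scheme-theoretic quantities $d$, $q_h$ and $\beta$.
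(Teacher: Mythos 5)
Your proposal is correct and matches the paper's (implicit) argument exactly: the corollary is stated as an immediate specialization of Theorem 1 of \cite{Graphs} via Lemma \ref{lem: lambda and Q for graphs associated with schemes}, using $\lambda=d$, $v_{\vol}=\boldsymbol{1}$ for a normalized scheme so that $Q^{(M)}_{ih}=q_h$, and $\alpha=\beta$ so that $e^{-l(\varepsilon)d}=\beta^d$. The bookkeeping is complete and nothing is missing.
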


\subsection{Tiles in partitions generated by incommensurable multiscale schemes}

In the language of tiles and Kakutani sequences of partitions generated
by multiscale substitution schemes, as a result of Lemma \ref{lem: tiles =00003D paths},
Corollary \ref{cor: counting paths in graph associated with scheme}
amounts to the following result. 
\begin{thm}
	\label{thm: graph path counting in the case of multiscale}Let $\sigma$ be an irreducible incommensurable multiscale substitution scheme in $\mathbb{R}^{d}$ and let $\left\{ \pi_{m}\right\}$
	be a Kakutani sequence of partitions of $T_{i}\in\tau_\sigma$. Denote 
	\[
	b_{hj}:=\sum_{k=1}^{k_{hj}}\frac{1-\left(\beta_{hj}^{(k)}\right)^{d}}{d}.
	\]
	Then the number of tiles of type $j$ appearing in the partition $\pi_{m}$
	of $ T_{i}$ grows as
	\[
	\left|j_{m}\right|=\sum_{h=1}^{n}b_{hj}q_{h}e^{dl_{m}}+o\left(e^{dl_{m}}\right),\quad m\rightarrow\infty,
	\]
	independent of $i$, where $q_{h}\cdot\boldsymbol{1}$ is column $h$ of the matrix $Q_\sigma$.
\end{thm}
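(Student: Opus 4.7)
The plan is to translate the statement into a counting problem for metric paths in the graph of an equivalent normalized scheme, and then apply the corollary on counting metric paths (Corollary on counting paths in graph associated with scheme), summing the contribution of every edge that terminates at vertex $j$.

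First, I would replace $\sigma$ by the equivalent normalized scheme $\sigma'$ and work in $G_{\sigma'}$. By Lemma \ref{lem: equivalent schemes have the same constants} the constants $\beta_{hj}^{(k)}$ are unchanged, by Corollary \ref{cor: Graphs-associated-with equivalent schemes have the same closed paths} the sequence of closed-path lengths is unchanged, and by Lemma \ref{lem: tiles =00003D paths} the tiles of type $j$ in $\pi_m$ that come from substituting a tile of type $h$ via the substitution tile $\alpha_{hj}^{(k)} T_j\in\omega_\sigma(T_h)$ are in one-to-one correspondence with metric paths in $G_{\sigma'}$ of length $l_m$ that originate at $i\in\mathcal{V}_{\sigma'}$ and terminate on the corresponding edge $\varepsilon_{hj}^{(k)}\in\mathcal{E}_{\sigma'}$, an edge of length $\log(1/\beta_{hj}^{(k)})$ with initial vertex $h$ and terminal vertex $j$.

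Next, for each such edge $\varepsilon_{hj}^{(k)}$ I would invoke Corollary \ref{cor: counting paths in graph associated with scheme} to assert that the number of such metric paths grows as
\[
\frac{1-(\beta_{hj}^{(k)})^{d}}{d}\,q_{h}\,e^{d\,l_m}+o\!\left(e^{d\,l_m}\right),\qquad m\to\infty,
\]
where $q_h\cdot\boldsymbol{1}$ is column $h$ of $Q_\sigma$ (Lemma \ref{lem: lambda and Q for graphs associated with schemes} is what makes the column constant, and hence gives independence of the initial vertex $i$). Summing over all edges with terminal vertex $j$, i.e.\ over all $1\le h\le n$ and $1\le k\le k_{hj}$, yields
\[
|j_m|=\sum_{h=1}^{n}\sum_{k=1}^{k_{hj}}\frac{1-(\beta_{hj}^{(k)})^d}{d}\,q_h\,e^{d\,l_m}+o\!\left(e^{d\,l_m}\right)=\sum_{h=1}^{n}b_{hj}\,q_h\,e^{d\,l_m}+o\!\left(e^{d\,l_m}\right),
\]
using that a finite sum of $o(e^{d\,l_m})$ errors remains $o(e^{d\,l_m})$.

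There is essentially no analytic obstacle here, because the heavy lifting (the Wiener--Ikehara Tauberian input and the identification of $\lambda$ and of the shape of $Q_\sigma$) is already packaged in Theorem $1$ of \cite{Graphs} and in Lemma \ref{lem: lambda and Q for graphs associated with schemes}. The only points that deserve care are the bookkeeping ones: (i) making sure that every tile of type $j$ in $\pi_m$ is counted exactly once by fixing, for each occurrence, the edge in $G_{\sigma'}$ on which its associated metric path terminates, so that the decomposition over pairs $(h,k)$ is genuinely a partition; (ii) using the normalized scheme so that the length $\log(1/\beta_{hj}^{(k)})$ of the edge in $G_{\sigma'}$ matches the constant of substitution, which is what makes $e^{-l(\varepsilon)d}=(\beta_{hj}^{(k)})^d$ in the corollary; and (iii) observing that independence from $i$ follows because all entries of column $h$ of $Q_\sigma$ equal $q_h$.
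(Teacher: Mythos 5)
Your proposal is correct and follows exactly the route the paper intends: the paper presents this theorem as an immediate consequence of Lemma \ref{lem: tiles =00003D paths} and Corollary \ref{cor: counting paths in graph associated with scheme}, i.e.\ passing to the normalized scheme, identifying tiles of type $j$ in $\pi_m$ with metric paths of length $l_m$ ending on edges with terminal vertex $j$, applying the edge-by-edge asymptotic count, and summing the finitely many $o(e^{dl_m})$ error terms. Your bookkeeping remarks (exact partition of the tiles over the pairs $(h,k)$, the role of normalization in making $e^{-l(\varepsilon)d}=(\beta_{hj}^{(k)})^d$, and the constancy of the columns of $Q_\sigma$ giving independence of $i$) are precisely the points the paper leaves implicit.
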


\begin{proof}[Proof of Theorem \ref{Thm: Main result} for incommensurable schemes]
	Let $T\in\mathscr{T}^\sigma_{i}$ and assume $T$ is a tile of type $i_{0}$
	that appears in partition $\pi_{m_{0}}$. The path $\gamma_{T}$
	corresponding to $T$ in the graph $G_\sigma$ associated with an equivalent
	normalized scheme terminates at vertex $i_{0}$ and is of length $l\left(\gamma\right)=l_{m_{0}}$,
	and therefore
	\[
	\vol T=e^{-l_{m_{0}}d}\cdot\vol T_{i_{0}}.
	\]
	Let $m>m_{0}$, then by Theorem \ref{thm: graph path counting in the case of multiscale}
	
	\[
	\frac{\left|x_{m}\cap T\right|}{\left|x_{m}\right|}=\frac{\sum\limits _{h,j=1}^{n}b_{hj}q_{h}e^{d\left(l_{m}-l_{m_{0}}\right)}}{\sum\limits _{h,j=1}^{n}b_{hj}q_{h}e^{dl_{m}}}+o\left(1\right),
	\]
	where $q_{h}\cdot\boldsymbol{1}$ is column $h$ of $Q_\sigma$.
	Since the scheme is normalized, $\frac{\vol T_{i_{0}}}{\vol T_{i}}=1$,
	and so 
	\[
	\begin{aligned}\frac{\sum\limits _{h,j=1}^{n}b_{hj}q_{h}e^{d\left(l_{m}-l_{m_{0}}\right)}}{\sum\limits _{h,j=1}^{n}b_{hj}q_{h}e^{dl_{m}}} & =\frac{e^{d\left(l_{m}-l_{m_{0}}\right)}}{e^{dl_{m}}}\cdot\frac{\sum\limits _{h,j=1}^{n}b_{hj}q_{h}}{\sum\limits _{h,j=1}^{n}b_{hj}q_{h}}\\
	& =e^{-l_{m_{0}}d}=e^{-l_{m_{0}}d}\frac{\vol T_{i_{0}}}{\vol T_{i}}\\
	& =\frac{\vol T}{\vol T_{i}}.
	\end{aligned}
	\]
	Therefore
	\[
	\lim_{m\rightarrow\infty}\frac{\left|x_{m}\cap T\right|}{\left|x_{m}\right|}=\frac{\vol T}{\vol T_{i}},
	\]
	and combined with Lemma \ref{Lemma: counting implies uniform distribution}, this completes the proof of Theorem \ref{Thm: Main result}.
\end{proof}

\subsection{Frequencies of types}
\begin{proof}[Proof of parts $\left(1\right)$ and $\left(2\right)$ of Theorem
	\ref{thm:Types, u.d and frequencies}]
	For the first part, simply follow the proof of Theorem \ref{Thm: Main result}
	and replace $\sum\limits _{h,j=1}^{n}b_{hj}$ with $\sum\limits _{h=1}^{n}b_{hr}$.
	
	By Theorem \ref{thm: graph path counting in the case of multiscale},
	the ratio between the number of tiles of type
	$r$ in $\pi_{m}$ and the total number of tiles is given by
	\[
	\frac{|x_{m}^{\left(r\right)}|}{\left|x_{m}\right|}=\frac{\sum\limits _{h=1}^{n}b_{hr}q_{h}e^{dl_{m}}+o\left(e^{dl_{m}}\right)}{\sum\limits _{h,j=1}^{n}b_{hj}q_{h}e^{dl_{m}}+o\left(e^{dl_{m}}\right)}=\frac{\sum\limits _{h=1}^{n}b_{hr}q_{h}}{\sum\limits _{h,j=1}^{n}b_{hj}q_{h}}+o\left(1\right),\quad m\rightarrow\infty.
	\]
	Plugging in the definition of $b_{hj}$ finishes the proof.
\end{proof}
For the proof of the third part of Theorem \ref{thm:Types, u.d and frequencies},
an additional result on random walk on graphs is needed. Let $G$
be a directed weighted graph. For any $i\in\mathcal{V}$ and $\varepsilon\in\mathcal{E}$
with initial vertex $i$, denote by $p_{i\varepsilon}>0$ the probability
that a walker who is passing through vertex $i$ chooses to continue
his walk through edge $\varepsilon$, and assume that for any vertex
in $G$, the sum of the probabilities over all edges originating at
that vertex is equal to $1$. Let $\varepsilon_{1},\ldots,\varepsilon_{k_{ij}}$ be the edges in $G$
with initial vertex $i$ and terminal vertex $j$. The \textit{graph
	probability matrix function} $N:\mathbb{C}\rightarrow M_{n}\left(\mathbb{C}\right)$
is defined by
\[
N_{ij}(s)=p_{i\varepsilon_1}e^{-s\cdot l\left(\varepsilon_1\right)}+\cdots+p_{i\varepsilon_{k_{ij}}}e^{-s\cdot l\left(\varepsilon_{k_{ij}}\right)},
\]
and if $i$ is not connected to $j$ by an edge, put $N_{ij}(s)=0$.

\begin{thm*}[Special case of Theorem $2$ in \cite{Graphs}]
	\label{Main result 2} Let $G$ be a strongly connected incommensurable
	graph with a set of vertices $\mathcal{V}=\left\{ 1,\ldots,n\right\} $,
	and consider a walker on $G$ advancing at constant speed $1$, obeying
	the probabilities $p_{i\varepsilon}$ attached to the edges of $G$.
	There exists a matrix $Q\in M_{n}(\mathbb{R})$ with positive
	entries such that for any $\varepsilon\in\mathcal{E}$ with initial
	vertex $h\in\mathcal{V}$, the probability that a walker who has left
	vertex $i\in\mathcal{V}$ at time $t=0$ is on the edge $\varepsilon\in\mathcal{E}$
	at time $t=T$ is 
	\[
	p_{h\varepsilon}l\left(\varepsilon\right)Q_{ih}+o(1),\quad T\rightarrow\infty.
	\]
	Here 
	\[
	Q^{(N)}=\frac{\adj\left(I-N(0)\right)}{-\tr\left(\adj\left(I-N(0)\right)\cdot N^{\prime}(0)\right)}.
	\]
\end{thm*}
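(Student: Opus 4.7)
The plan is to adapt the proof template of Theorem~1 of~\cite{Graphs}, replacing weighted counts of paths by probability-weighted sums and the matrix $M(s)$ by $N(s)$. The starting observation is that the walker's position at time $T$ lies on the edge $\varepsilon$ (with initial vertex $h$) precisely when the walker arrived at $h$ at some time $T-s$ with $s\in[0,l(\varepsilon)]$ and chose $\varepsilon$ on leaving $h$ at that visit. Letting
$$
\mathcal{P}_{ih}(T)=\sum_{k\ge 0}\sum_{\substack{\gamma:\,i\to h\\ l(\gamma)\le T}}\,\prod_{j=1}^k p_{\cdot,\varepsilon_j}
$$
denote the expected number of visits of the walker to vertex $h$ by time $T$ (summing over paths $\gamma=(\varepsilon_1,\dots,\varepsilon_k)$), the desired probability is exactly $p_{h\varepsilon}\bigl[\mathcal{P}_{ih}(T)-\mathcal{P}_{ih}(T-l(\varepsilon))\bigr]$, since the walker is on $\varepsilon$ at time $T$ iff the most recent visit to $h$ occurred in the window $[T-l(\varepsilon),T]$ and was followed by the choice of $\varepsilon$.

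Next I would take the Laplace transform of $d\mathcal{P}_{ih}$: summing a geometric series in paths of each length gives
$$
\int_0^\infty e^{-sT}\,d\mathcal{P}_{ih}(T)=\sum_{k\ge 0}[N(s)^k]_{ih}=\bigl[(I-N(s))^{-1}\bigr]_{ih}
$$
for $\Re s$ large. Because $N(0)$ is row-stochastic and $G$ is strongly connected, Perron--Frobenius forces $1$ to be a simple eigenvalue of $N(0)$, so $\det(I-N(s))$ has a simple zero at $s=0$. Jacobi's formula gives $\frac{d}{ds}\det(I-N(s))|_{s=0}=-\tr(\adj(I-N(0))\,N'(0))$, and combined with $(I-N(s))^{-1}=\adj(I-N(s))/\det(I-N(s))$ this identifies the residue of $(I-N(s))^{-1}$ at $s=0$ as exactly the matrix $Q^{(N)}$ appearing in the statement.

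The remaining step is to convert this residue calculation into the renewal-type asymptotic $\mathcal{P}_{ih}(T)=Q^{(N)}_{ih}T+o(1)$ via a Wiener--Ikehara Tauberian argument; subtracting the same asymptotic at $T-l(\varepsilon)$ then produces the claimed formula with $o(1)$ error. The hypotheses of Wiener--Ikehara require $\det(I-N(s))$ to be non-vanishing on $\Re s=0$ apart from the simple zero at $s=0$, and this is exactly where incommensurability enters: if $1$ were an eigenvalue of $N(it)$ for some $t\ne 0$, the extremal-principle argument used for $M$ in~\cite{Graphs} forces $t\cdot l(\gamma)\in 2\pi\mathbb{Z}$ for every closed path $\gamma$ in $G$, contradicting the non-lattice property of Lemma~\ref{lem: Incommensurable equivalent definitions}. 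The main obstacle I anticipate is precisely this Tauberian / complex-analytic step: one must track the vector-valued Laplace transform uniformly on a neighbourhood of the imaginary axis, control the contribution of any off-axis eigenvalues of $N(s)$, and upgrade a weak-$o(T)$ bound to the sharp $o(1)$ renewal-type bound needed to survive the subtraction, the technical content of which parallels the analytic portion of~\cite{Graphs} specialized to $\lambda=0$.
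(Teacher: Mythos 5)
The paper gives no proof of this statement --- it is imported verbatim from Theorem~2 of \cite{Graphs}, with the method there described only as ``analysis of the Laplace transform of the corresponding counting function and the application of the Wiener--Ikehara Tauberian theorem.'' Your renewal decomposition, the identification of the Laplace transform with $\left[(I-N(s))^{-1}\right]_{ih}$, the residue computation yielding $Q^{(N)}$, and the use of incommensurability to clear the imaginary axis for the Tauberian step reproduce exactly that approach (and you correctly flag the passage from $\mathcal{P}(T)\sim Q_{ih}T$ to the Blackwell-type increment asymptotic as the genuine technical content deferred to \cite{Graphs}), so this is essentially the same argument.
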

\begin{proof}[Proof of part $\left(3\right)$ of Theorem \ref{thm:Types, u.d and frequencies}]
	The probability that a point in $ T_{i}$ is in a tile
	$\alpha\cdot\varphi\left( T_{j}\right)\in \varrho_\sigma(T_i)$
	after the application of the substitution rule once, is given by
	\[
	\frac{\vol\left(\alpha\cdot\varphi\left( T_{j}\right)\right)}{\vol T_{i}}=\beta^{d}.
	\]
	Therefore, if $\varepsilon$ is the edge associated
	with $\alpha T_{j}\in \omega_\sigma(T_i)$
	in the graph $G_\sigma$ associated with an equivalent normalized scheme,
	then we set
	\[
	p_{i\varepsilon}=\alpha^{d}=\beta^{d}.
	\]
	
	By Lemma \ref{lem: constants and volumes}, the sum of the probabilities
	on all edges with a common initial vertex equals $1$. Observe that
	in the case studied here $N(0)=M(d)=M_\sigma$ and $N^{\prime}(0)=M^{\prime}(d)=M'_\sigma$,
	where $M$ and $N$ are the graph and
	the graph probability matrix functions of $G_\sigma$, and so $Q^{(N)}=Q^{(M)}=Q_\sigma$.
	The result now follows from the special case
	of Theorem $2$ in \cite{Graphs} and from the previous discussion. 
\end{proof}

	Since the formulas for the path counting functions on incommensurable
	graphs that are given in \cite{Graphs} are stated separately for
	every edge in the graph, they imply results which are more refined
	than those stated in Theorem \ref{thm:Types, u.d and frequencies}.
	A nice consequence of this more refined version of Theorem \ref{thm:Types, u.d and frequencies}
	is given in Example \ref{example: red-blue kakutani statistics}.

\begin{example}
	\label{example: red-blue kakutani statistics}Let $\left\{ \pi_{m}\right\} $
	be the $\frac{1}{3}$-Kakutani sequence of partitions of $\mathcal{I}$
	as illustrated in Figure \ref{fig:a few elements in Kakutani third partition}.
	Whenever a partition is made, color shorter of the two new intervals
	red and the longer one black. The first non-trivial elements of this
	colored sequence of partitions is illustrated in Figure \ref{fig: Kakutani red blue}. 
	
	\begin{figure}[H]
		\includegraphics[scale=1.94]{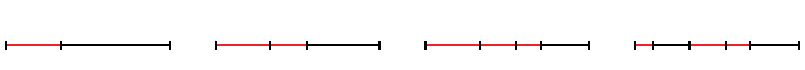}\caption{\label{fig: Kakutani red blue}After each substitution, the shorter
			of the new intervals is colored red.}
	\end{figure}
	
	Then by the arguments used in the proof of parts $\left(2\right)$
	and $\left(3\right)$ of Theorem \ref{thm:Types, u.d and frequencies}, we have
	\[
	\lim_{m\rightarrow\infty}\frac{\left|\left\{ \text{Red intervals \ensuremath{\in}\ensuremath{\ensuremath{\pi_{m}}}}\right\} \right|}{\left|\left\{ {\rm \text{Intervals \ensuremath{\in}\ensuremath{\ensuremath{\pi_{m}}}}}\right\} \right|}=\frac{2}{3}
	\]
	and 
	\[
	\lim_{m\rightarrow\infty}\vol\left(\bigcup\left\{ \text{Red intervals \ensuremath{\in}\ensuremath{\ensuremath{\pi_{m}}}}\right\} \right)=\frac{\frac{1}{3}\log\frac{1}{3}}{\frac{1}{3}\log\frac{1}{3}+\frac{2}{3}\log\frac{2}{3}}.
	\]
\end{example}

\end{document}